\newcommand{\Bmu}{\mbox{$\raisebox{-0.59ex}
  {$l$}\hspace{-0.18em}\mu\hspace{-0.88em}\raisebox{-0.98ex}{\scalebox{2}
  {$\color{white}.$}}\hspace{-0.416em}\raisebox{+0.88ex}
  {$\color{white}.$}\hspace{0.46em}$}{}}
\newcommand{\scHom}{\mathscr{H}\text{\kern -3pt {\calligra\large om}}}
\newcommand{\scExt}{\mathscr{E}\text{\kern -3pt {\calligra\large xt}}}
\DeclareMathAlphabet{\mathbbold}{U}{bbold}{m}{n}
\newcommand{\textcyr}[1]{%
 {\fontencoding{OT2}\fontfamily{wncyr}\fontseries{m}\fontshape{n}
 \selectfont #1}}
\newcommand{\Sha}{{\!\be\lbe\mbox{\textcyr{Sh}}}}
\theoremstyle{plain}
\newtheorem{theorem}{Theorem}[subsection]
\newtheorem{mytheorem}{Theorem}
\newtheorem{lemma}[theorem]{Lemma}
\newtheorem{proposition}[mytheorem]{Proposition}
\newtheorem{myproposition}[theorem]{Proposition}
\newtheorem{definition}[theorem]{Definition}
\def\le{\kern 0.03em}
\def\A{{\mathbb A}}
\def\F{{\mathbb F}}
\def\O{{\mathcal O}}
\def\Q{{\mathbb Q}}
\def\U{{\mathcal U}}
\def\V{{\mathcal V}}
\def\X{{\mathcal X}}
\def\Z{{\mathbb Z}}
\def\e{\kern 0.08em}
\def\be{\kern -.1em}
\def\lbe{\kern -.025em}
\DeclareMathOperator{\coh}{H}
\DeclareMathOperator{\Gal}{Gal}
 \DeclareMathOperator{\Nm}{N}
 \DeclareMathOperator{\image}{Im}
\DeclareMathOperator{\Sel}{{\rm{Sel}}}
\DeclareMathOperator{\Hom}{Hom} 
\DeclareMathOperator{\ord}{ord} \DeclareMathOperator{\coker}{coker}
\begin{document}
\title{The Frobenius twists of elliptic curves over global function fields}
\author{Ki-Seng Tan}
\address{Department of Mathematics\\
National Taiwan University\\%
Taipei 10764, Taiwan}
\email{tan@math.ntu.edu.tw}
\thanks{\textbf{Acknowledgement:} This research was supported in part by Ministry of Science and Technology
of Taiwan, MOST 109-2115-M-002-008-MY2. The author thanks F. Trihan for many valuable suggestions especially for
helping him with the proof of Lemma \ref{l:isogen}}
\begin{abstract}
For an elliptic curve $A$ defined over a global function field $K$ of characteristic $p>0$, the $p$-Selmer group of the Frobenius twist $A^{(p)}$ of $A$ tends to have
larger order than that of $A$. The aim of this note is to discuss this phenomenon.

\end{abstract}
\maketitle

\section{Introduction}\label{s:int}
For an elliptic curve $A$ defined over a global function field $K$ of characteristic $p>0$, the $p$-Selmer group of the Frobenius twist of $A$ tends to have
larger order than that of $A$. The aim of this note is to discuss this phenomenon.

The Frobenius twist $A^{(p)}$ is the base change $A\times_K K$ of $A$ over the absolute Frobenius $\mathrm{Frob}_{p}:K\longrightarrow K$, $x\mapsto x^{p}$. 

\subsection{The main results}\label{su:main} We assume that $A{/K}$ is {\em{ordinary}},  
having {\em{semi-stable reduction everywhere}}.
Let $\Delta_{A/K}$ denote the divisor of the global minimal discriminant of $A/K$.
Comparing the defining equation for both curves yields
\begin{equation}\label{e:del}
\Delta_{A^{(p)}/K}=p\cdot \Delta_{A/K}.
\end{equation}

Let $A_{p^\nu}$ be the the kernel of the multiplication by $p^\nu$ on $A$ and let
$$\Sel_{p^\nu}(A/K)\subset \coh^1(K,A_{p^\nu}) $$ 
denote the $p^\nu$-Selmer group of $A/K$. 
If $p=2$, let $\eth'$ be the set of places of $K$ at which
$A$ has non-split multiplicative reduction and has the group of components of even order; otherwise, put $\eth'=\emptyset$.
Let $S_b$ denote the set of bad reduction places of $A/K$.
Write
$$S_b=\eth'\sqcup \eth.$$

Let $k=K(A^{(p)}_p(\bar K^s))$ and let $\eth_0\subset \eth$ be the subset of places splitting completely over $k$.
Let $\hslash$ denote the $p$-rank of the subgroup of $\Hom(\Gal(\bar k^s/k),\Z/p\Z)$ consisting of homomorphisms unramified everywhere 
and locally trivial at every places of $k$ sitting over $\eth$.
Our main results are as follow. 
Let $q$ denote the order of the constant field of $K$. 

\begin{proposition}\label{p:sel} There exists an integer $\epsilon_1$, $2\hslash+1+|\eth_0|\geq\epsilon_1\geq -|\eth_0|$, such that
$$  \log_p|\Sel_p(A^{(p)}/K)|=\frac{(p-1)\deg\Delta_{A/K}}{12}\cdot \log_p q+\epsilon_1.$$
\end{proposition}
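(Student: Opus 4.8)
The plan is to factor the multiplication map $[p]_{A^{(p)}}$ through the Frobenius and Verschiebung isogenies, transport the resulting Selmer computation to the smooth proper model $C/\F_q$ of $K$, and separate a geometric contribution --- which will produce the main term $\frac{(p-1)\deg\Delta_{A/K}}{12}\log_p q$ via Riemann--Roch on $C$ --- from an arithmetic remainder responsible for $\epsilon_1$. Write $F=F_A\colon A\to A^{(p)}$ for the relative Frobenius, which is purely inseparable of degree $p$ with $\ker F=(A_p)^{\circ}$ a form of $\mu_p$ (here ordinariness enters), and $V=V_A\colon A^{(p)}\to A$ for the Verschiebung, which is separable of degree $p$ with $\ker V=(A^{(p)}_p)^{\text{\'et}}$ the \'etale part of $A^{(p)}_p$, that is, the form of $\Z/p\Z$ cut out by the character $\psi\colon\Gal(\bar K^s/K)\to\F_p^{\times}$ with fixed field $k=K(A^{(p)}_p(\bar K^s))$, so $[k:K]\mid p-1$. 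Since $[p]_{A^{(p)}}=F\circ V$ has the \emph{separable} isogeny $V$ first, the short exact sequence $0\to\ker V\to A^{(p)}_p\xrightarrow{V}\ker F\to 0$ --- the connected--\'etale sequence of $A^{(p)}_p$ read through $V$ --- together with the matching of the local Selmer conditions for $F$ and $V$ gives, after a diagram chase, that $\log_p\#\Sel_p(A^{(p)}/K)$ is the sum of a term attached to the connected part $(A^{(p)}_p)^{\circ}\cong\ker F$ and a term attached to the \'etale part $(A^{(p)}_p)^{\text{\'et}}\cong\ker V$, modulo a bounded error governed by $\#(\ker F)(K)$, $\#(\ker V)(K)$ and a $\coh^2$-term, each of order $\leq p$.

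For the main term I would work over $C$. Up to bounded Tamagawa-type corrections at the finitely many bad places, $\Sel_p(A^{(p)}/K)$ is computed by the flat cohomology over $C$ of the $p$-torsion of the N\'eron model $\mathcal{A}^{(p)}$, and by Lemma~\ref{l:isogen} the decomposition above propagates to $\mathcal{A}^{(p)}$. The connected part $\mathcal{N}$ of $\mathcal{A}^{(p)}_p$, which over the good ordinary locus is the Frobenius kernel $\ker(F\colon\mathcal{A}^{(p)}\to\mathcal{A}^{(p^2)})$, is a finite flat $C$-group scheme of multiplicative type whose sheaf of invariant differentials is the Hodge bundle $\omega_{\mathcal{A}^{(p)}/C}$. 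Since $\Delta_{A^{(p)}/K}=p\,\Delta_{A/K}$ by \eqref{e:del}, and since the minimal discriminant divisor is the divisor of a section of the twelfth power of the Hodge bundle --- so that $\deg\Delta=12\deg\omega$ --- one has
\[\deg\omega_{\mathcal{A}^{(p)}/C}-\deg\omega_{\mathcal{A}/C}=(p-1)\deg\omega_{\mathcal{A}/C}=\frac{(p-1)\deg\Delta_{A/K}}{12}.\]
The fppf cohomology on $C$ of a $\mu_p$-type group scheme is essentially coherent, and a Riemann--Roch computation --- in which a $\mu_p$-torsor is controlled by $(p-1)$-st powers, so that the effective degree of twisting is exactly the displayed quantity --- gives for the geometric contribution the value $\tfrac{(p-1)\deg\Delta_{A/K}}{12}\log_p q+O(1)$; here the genus contributions cancel in the fppf Euler characteristic of a multiplicative-type group scheme, as already in the untwisted case, so the $O(1)$ absorbs only $\coh^0$-, $\coh^2$- and bad-fibre terms.

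The arithmetic remainder comes from the \'etale part. After inflation--restriction to $k$, the contribution of $(A^{(p)}_p)^{\text{\'et}}$ to the Selmer group becomes a subgroup of $\Hom(\Gal(\bar k^s/k),\Z/p\Z)$ consisting of homomorphisms that are unramified outside the bad places --- because $V$ is \'etale, hence unramified, at places of good reduction, where the propagated local condition is the unramified one --- and locally trivial at the places above $\eth$, the places of $\eth_0$ being precisely those above which this local condition may instead be the full local cohomology, and the $p=2$ anomaly being confined to $\eth'$. This identifies it, up to an error bounded by $|\eth_0|$, with the subgroup of $p$-rank $\hslash$ appearing in the statement; the boundary maps between the connected and \'etale parts, a possible rational $p$-torsion point, and the Tamagawa corrections contribute at most a further $\hslash$ to the $p$-rank together with a $+1$. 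Collecting all bounded quantities into one integer $\epsilon_1$ yields $2\hslash+1+|\eth_0|\geq\epsilon_1\geq-|\eth_0|$.

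The main obstacle is the geometric step: identifying the coherent sheaf on $C$ whose Riemann--Roch degree is $(p-1)\deg\Delta_{A/K}/12=\deg\omega_{\mathcal{A}^{(p)}/C}-\deg\omega_{\mathcal{A}/C}$, checking that its fppf cohomology leaves no residual genus term, and controlling the comparison between $\Sel_p(A^{(p)}/K)$ and the flat cohomology of $\mathcal{A}^{(p)}_p$ over $C$ --- the Tamagawa factors at the semi-stable bad fibres --- tightly enough that every error collapses into the narrow window $[-|\eth_0|,\,2\hslash+1+|\eth_0|]$. The relation $\deg\Delta=12\deg\omega$ is standard; marrying it cleanly to the flat-cohomology count, with Lemma~\ref{l:isogen} ensuring compatibility of the Frobenius--Verschiebung decomposition with N\'eron models, is the heart of the argument.
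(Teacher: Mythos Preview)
Your overall architecture — split $\Sel_p(A^{(p)}/K)$ via the exact sequence $0\to E_p^{(p)}\to A^{(p)}_p\xrightarrow{\mathsf V}C_p\to 0$ into an \'etale piece and a connected piece, and bound each — is exactly what the paper does. But you have the two pieces swapped: in the paper it is the \emph{\'etale} part $\Sel(E_p^{(p)}/K)$ that produces the main term $\frac{(p-1)\deg\Delta_{A/K}}{12}\log_pq$ (Proposition~\ref{p:qvnv}), while the \emph{connected} part $\Sel(C_p/K)$ is bounded by $|\eth_0|+\hslash$ (Proposition~\ref{p:selcp}).

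The specific step that fails is your claim that ``$V$ is \'etale, hence unramified, at places of good reduction, where the propagated local condition is the unramified one.'' This is true only at good \emph{ordinary} places (Lemma~\ref{l:good}). At a good \emph{supersingular} place $v$, although $\mathsf V$ is an \'etale isogeny, the induced map $A^{(p)}(K_v)\to A(K_v)$ is far from surjective: its cokernel has order $p^{\epsilon_v}q_v^{n_v}$ (Lemma~\ref{l:image}), so $\ker(j_v)\subset\coh^1(K_v,E_p^{(p)})$ consists of classes allowing wild ramification of conductor up to $\frac{pn_v}{p-1}$ (Lemma~\ref{l:sscond}). Summing over $S_{ss}$ via class field theory gives $\sum_{v\in S_{ss}} n_v\deg v=\frac{(p-1)\deg\Delta_{A/K}}{12}$ (equation~\eqref{e:discqvnv}), and \emph{this} is where the discriminant enters. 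Correspondingly, your assertion that the connected part extends to a multiplicative-type group scheme over all of $C$ breaks down at supersingular fibres, where $\bar A_p$ is local--local; and even where it is of $\mu_p$-type, fppf $\coh^1$ of such a group over a proper curve is essentially $\Pic[p]$, hence bounded --- it cannot supply a term growing with $\deg\Delta$. So the Riemann--Roch heuristic, as stated, does not produce the main term.

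The identity $\deg\omega_{\mathcal A^{(p)}/C}-\deg\omega_{\mathcal A/C}=\frac{(p-1)\deg\Delta_{A/K}}{12}$ you wrote down is correct and is morally the same as \eqref{e:discqvnv}; but the paper extracts it by comparing N\'eron differentials through $\mathsf V^*$ and relating $\ord_v(\mathsf V^*\omega_{0,v}/\omega_{0,v}^{(p)})$ to the local invariant $n_v$ at supersingular places, not by a global coherent Euler characteristic of the connected part.
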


 \begin{proposition}\label{p:aap} There exists an integer $\epsilon_2$, $2\hslash+1+|\eth_0|\geq\epsilon_2\geq  -2\hslash-3|\eth_0|$,
 such that for each positive
 integer $\nu$,
$$ \log_p|\Sel_{p^{\nu+1}}(A^{(p)}/K)|=\frac{(p-1)\deg\Delta_{A/K}}{12}\cdot \log_p q+\log_p|\Sel_{p^{\nu}}(A/K)|+\epsilon_2.$$
\end{proposition}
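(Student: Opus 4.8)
The plan is to run a d\'evissage of the $p^{\nu+1}$-descent on $A^{(p)}$ along the factorisation of multiplication by $p^{\nu+1}$ through $A$. Write $F\colon A\to A^{(p)}$ for the relative Frobenius isogeny and $V\colon A^{(p)}\to A$ for its dual Verschiebung, so that $V\circ F=[p]_A$, $F\circ V=[p]_{A^{(p)}}$ and, using $F\circ[p^{\nu}]_A=[p^{\nu}]_{A^{(p)}}\circ F$,
\[
[p^{\nu+1}]_{A^{(p)}}=F\circ[p^{\nu}]_A\circ V .
\]
Since $A$ is ordinary, $\ker F$ is a $K$-form of $\mu_p$ and $\ker V$ a $K$-form of $\Z/p\Z$, Cartier dual to each other, and the splitting field of $\ker V$ is precisely $k=K(A^{(p)}_p(\bar K^s))$. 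The factorisation yields a three-step filtration of $A^{(p)}_{p^{\nu+1}}=\ker[p^{\nu+1}]_{A^{(p)}}$, namely $A^{(p)}_{p^{\nu+1}}\supset V^{-1}(A_{p^{\nu}})\supset\ker V$, whose graded pieces from the bottom up are $\ker V$, then $A_{p^{\nu}}$ via $V$, then $\ker F$ via $[p^{\nu}]_A\circ V$.

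I would then carry out the standard snake-lemma d\'evissage of Selmer groups attached to this filtration: pass to flat cohomology, feed the short exact sequences $0\to\ker V\to V^{-1}(A_{p^{\nu}})\to A_{p^{\nu}}\to 0$ and $0\to V^{-1}(A_{p^{\nu}})\to A^{(p)}_{p^{\nu+1}}\to\ker F\to 0$ together with their local analogues into the long exact sequences, and track the local conditions. Functoriality of the connecting maps identifies the local condition induced on the middle graded piece with the one defining $\Sel_{p^{\nu}}(A/K)$, and those on the two outer pieces with the ones defining the Selmer groups $\Sel_V(A^{(p)}/K)\subset\coh^1(K,\ker V)$ and $\Sel_F(A/K)\subset\coh^1(K,\ker F)$ of the isogenies $V$ and $F$. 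This produces an identity
\[
\log_p|\Sel_{p^{\nu+1}}(A^{(p)}/K)|=\log_p|\Sel_V(A^{(p)}/K)|+\log_p|\Sel_{p^{\nu}}(A/K)|+\log_p|\Sel_F(A/K)|+C_\nu ,
\]
where $C_\nu$ is assembled from orders of rational torsion subgroups such as $A(K)[p^{\nu}]$, $\ker F(K)$, $\ker V(K)$ and corresponding local $\coh^0$'s, from the failure of surjectivity in the two d\'evissage sequences, and from a global contribution given by the everywhere-unramified, $\eth$-locally-trivial cohomology of $k$, of $p$-rank $\hslash$. The crucial point is that $C_\nu$ is bounded \emph{uniformly in $\nu$}: $A(K)$ is finitely generated, so $A(K)[p^{\nu}]$ and, by ordinariness, the local torsion $A(K_v)[p^{\nu}]$ stabilise; $\coh^2(K,\ker V)=0$ because $\mu_p$ has no nontrivial separable points, so $\coh^1(K,V^{-1}(A_{p^{\nu}}))\twoheadrightarrow\coh^1(K,A_{p^{\nu}})$; and the remaining inputs live at the bad places $\eth$, $\eth'$ and over $k$, none of which involves $\nu$.

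Specialising to $\nu=0$, where $\Sel_1(A/K)=\coh^1(K,0)=0$, the identity reads $\log_p|\Sel_p(A^{(p)}/K)|=\log_p|\Sel_V(A^{(p)}/K)|+\log_p|\Sel_F(A/K)|+C_0$; subtracting this from the general case cancels the two $\nu$-independent Selmer factors and gives
\[
\log_p|\Sel_{p^{\nu+1}}(A^{(p)}/K)|=\log_p|\Sel_p(A^{(p)}/K)|+\log_p|\Sel_{p^{\nu}}(A/K)|+(C_\nu-C_0).
\]
It then suffices to quote Proposition~\ref{p:sel}, which evaluates $\log_p|\Sel_p(A^{(p)}/K)|$ as $\tfrac{(p-1)\deg\Delta_{A/K}}{12}\log_p q+\epsilon_1$ with $-|\eth_0|\le\epsilon_1\le 2\hslash+1+|\eth_0|$: this yields the statement with $\epsilon_2=\epsilon_1+C_\nu-C_0$, the explicit range $2\hslash+1+|\eth_0|\ge\epsilon_2\ge-2\hslash-3|\eth_0|$ following by estimating $C_\nu-C_0$ term by term.

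The main obstacle is exactly this last bookkeeping: proving $C_\nu$ independent of $\nu$ up to a controlled amount and then extracting the sharp range. Both rest on a careful analysis of the local conditions at the places of $\eth$, $\eth'$ and $\eth_0$, where $A^{(p)}$ differs from $A$ because its groups of components are $p$ times larger (the geometric source of the $\deg\Delta_{A/K}$-term already present in Proposition~\ref{p:sel}), and on controlling the everywhere-unramified cohomology of $k$ measured by $\hslash$; since $\ker F$ and $\ker V$ are only twisted forms of $\mu_p$ and $\Z/p\Z$, one must compute over $k$ and descend, and it is there that the precise constants---most of them cancelling against terms already folded into $\epsilon_1$---get pinned down.
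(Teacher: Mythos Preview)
Your d\'evissage along $[p^{\nu+1}]_{A^{(p)}}=F\circ[p^{\nu}]_A\circ V$ is a legitimate route, but it is \emph{not} the one the paper takes, and the part you flag as ``the main obstacle'' is exactly where the paper's argument is sharper. The paper does not run a three-step filtration on $\Sel_{p^{\nu+1}}(A^{(p)}/K)$ at all. Instead it passes to the quotients $\overline\Sha=\Sel_{p^\infty}/\Sel_{div}$ and invokes the Cassels--Tate pairing: for the dual isogenies $\mathsf F,\mathsf V$ one has $|(\overline\Sha(A/K)/\ker\mathsf F_\natural)[p^\nu]|=|(\overline\Sha(A^{(p)}/K)/\ker\mathsf V_\natural)[p^\nu]|$. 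Two elementary diagram chases then compare $\overline\Sha(A/K)[p^\nu]$ with $(\overline\Sha(A^{(p)}/K)/\overline\Sha(A^{(p)}/K)[p])[p^\nu]=\overline\Sha(A^{(p)}/K)[p^{\nu+1}]/\overline\Sha(A^{(p)}/K)[p]$, producing error terms $\delta_1,\delta_2$ each trapped in $[0,\log_p|\ker\mathsf F_\natural|]\subset[0,\log_p|\Sel(C_p/K)|]$. Translating $\overline\Sha[p^\bullet]$ back to $\Sel_{p^\bullet}$ costs only the corank $r$ on each side, which cancels, and then Proposition~\ref{p:sel} and Proposition~\ref{p:selcp} give $\epsilon_2=\epsilon_1-\delta_1-\delta_2$ with the stated range immediately.

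What the paper's approach buys is that the only error terms are $\delta_1,\delta_2\in[0,\hslash+|\eth_0|]$, coming from the single group $\ker\mathsf F_\natural\subset\Sel(C_p/K)$; there is no cokernel of $\Sel_{p^{\nu+1}}(A^{(p)}/K)\to\Sel(C_p/K)$ to control and no lift of local conditions through two exact sequences. Your $C_\nu$, by contrast, packages the cokernels of both d\'evissage maps on Selmer groups, and while your claim that these are bounded in $\nu$ is correct (the $\coh^2(K,E_p^{(p)})=0$ argument is fine, and $A(K)[p^\infty]$ handles the rest), extracting the precise range $-2\hslash-3|\eth_0|\le\epsilon_2$ from $C_\nu-C_0$ would require you to show $C_\nu\le C_0$ with defect at most $2(\hslash+|\eth_0|)$. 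That is not obvious from your outline: it amounts to proving that the image of $\Sel_{p^{\nu+1}}(A^{(p)}/K)$ in $\Sel(C_p/K)$ only shrinks as $\nu$ grows, and bounding the shrinkage. The Cassels--Tate route sidesteps this entirely by working on $\overline\Sha$ where the duality furnishes the comparison for free.
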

Note that $\frac{\deg\Delta_{A/K}}{12}$ is a non-negative integer (see \cite[\S 2.2.1]{lltt16}), it is zero, if and only if $A/K$ is isotrivial.
Let $\Sha_{p^\infty}(A/K)$ denote the $p$-primary part of the Tate-Shafarevich group of $A/K$, let $\Sha_{div}(A/K)$ be its $p$-divisible
subgroup, and denote the $p$-cotorsion
$$\overline\Sha(A/K):=\Sha_{p^\infty}(A/K)/\Sha_{div}(A/K)=\Sel_{p^\infty}(A/K)/\Sel_{div}(A/K),$$
where $\Sel_{div}(A/K)$ is the $p$-divisible subgroup of $\Sel_{p^\infty}(A/K)$. Let $r$ denote the $\Z_p$-co-rank of $\Sel_{div}(A/K)$.
If $\nu$ is greater than the exponents of
$\overline\Sha(A/K)$ and  $\overline\Sha(A^{(p)}/K)$, then 
$$|\Sel_{p^{\nu}}(A/K)|=|\overline\Sha(A/K)|\cdot p^{r\nu}\;\;\text{and}\;\;
|\Sel_{p^{\nu+1}}(A^{(p)}/K)|=|\overline\Sha(A^{(p)}/K)|\cdot p^{r(\nu+1)}.$$
It follows from Proposition \ref{p:aap} that
$$ \log_p|\overline\Sha(A^{(p)}/K) |+r=\frac{(p-1)\deg\Delta_{A/K}}{12}\cdot \log_p q+\log_p| \overline\Sha(A/K)|+\epsilon_2.$$
Such kind of formulae is suggested by the conjectured Birch and Swinnerton-Dyer formulae (see \cite{Ta66,tan95}) for both
$A^{(p)}/K$ and $A/K$.

Next, let $L/K$ be a $\Z_p^d$-extension ramified only at a {\em {finite number of ordinary places}}
of $A/K$. Write $\Gamma:=\Gal(L/K)$ and $\Lambda_\Gamma:=\Z_p[[\Gamma]]$. Let $Z$ be a finitely generated {\em{torsion}} 
$\Lambda_\Gamma$-module, so that there is an exact sequence 
\begin{equation}\label{e:iwasawa}
\xymatrix{0 \ar[r] &  \bigoplus_{i=1}^m \Lambda_\Gamma/(p^{\alpha_i}) \oplus \bigoplus_{j=1}^n \Lambda_\Gamma/(\eta_j^{\beta_j})
\ar[r]  &  Z \ar[r] &  N \ar[r] & 0,}
\end{equation}
where $\alpha_1,...,\alpha_m,\beta_1,...,\beta_n$ are positive integers,
$\eta_1,...,\eta_n\in \Lambda_\Gamma$ are irreducible, relatively prime to $p$, and $N$ is pseudo-null.
Although the exact sequence is not canonical, the modules  $\bigoplus_{i=1}^m \Lambda_\Gamma/(p^{\alpha_i})$ and
$\bigoplus_{j=1}^n \Lambda_\Gamma/(\eta_j^{\beta_j})$ are uniquely determined by $Z$,
we call them the $p$ part and the non-$p$ part of $Z$, call $p^{\alpha_1}, ..., p^{\alpha_m}$ the elementary $\mu$-invariants and $m$ the $\mu$-rank of $Z$.
If $Z$ is non-torsion, define the $\mu$-rank to be $\infty$.

Consider the Pontryagin dual $X$, $X^{(p)}$ of $\Sel_{p^\infty}(A/L)$, $\Sel_{p^\infty}(A^{(p)}/L)$. They are finitely generated over $\Lambda_\Gamma$ (see \cite{tan13}).
 Put
$$\eth_1:=\{ v\in \eth\;\mid \; v \;\text{splits completely over}\; kL\}.$$

\begin{proposition}\label{p:p} 
The $\mu$-rank of $X^{(p)}$ is at least
$\frac{(p-1)\deg\Delta_{A/K}}{12}\cdot \log_p q -|\eth_1|$.
\end{proposition}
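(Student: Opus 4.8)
The plan is to recover the $\mu$-rank $m$ of $X^{(p)}$ from the growth of the finite-level Selmer groups $\Sel_p(A^{(p)}/K_n)$, where $K_n:=L^{\Gamma^{p^n}}$ and $\Gamma^{p^n}$ is the subgroup of $p^n$-th powers, so that $[\Gamma:\Gamma^{p^n}]=[K_n:K]\to\infty$. One may assume $X^{(p)}$ is $\Lambda_\Gamma$-torsion, as otherwise $m=\infty$. Two standard inputs will be used throughout: first, applying the structure sequence \eqref{e:iwasawa} to $X^{(p)}$ shows that $X^{(p)}/pX^{(p)}$ has generic rank exactly $m$ over $\Lambda_\Gamma/p=\F_p[[\Gamma]]$ --- the non-$p$ part and $N$ reduce to $\F_p[[\Gamma]]$-torsion, while each $\Lambda_\Gamma/(p^{\alpha_i})$ reduces to a free module of rank one; second, for any finitely generated $\F_p[[\Gamma]]$-module $M$ of generic rank $r$ one has $\dim_{\F_p}(M/I_nM)=r\,[\Gamma:\Gamma^{p^n}]+O\bigl([\Gamma:\Gamma^{p^n}]^{1-1/d}\bigr)$, where $I_n=\krn(\Lambda_\Gamma\to\Z_p[\Gamma/\Gamma^{p^n}])$, since $\F_p[[\Gamma]]/I_n=\F_p[\Gamma/\Gamma^{p^n}]$ has $\F_p$-dimension $[\Gamma:\Gamma^{p^n}]$ and any $\F_p[[\Gamma]]$-torsion module, being supported in codimension $\ge1$, contributes only $O\bigl([\Gamma:\Gamma^{p^n}]^{1-1/d}\bigr)$ to $\dim_{\F_p}(\,\cdot\,/I_n)$.

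\emph{The lower bound.} First one checks that $A/K_n$ still satisfies the hypotheses of Proposition \ref{p:sel}: since $K_n/K$ is ramified only at ordinary places, which are good-ordinary or multiplicative and hence semi-stable, $A/K_n$ remains ordinary with semi-stable reduction everywhere, and $A^{(p)}\times_KK_n=(A\times_KK_n)^{(p)}$. Comparing minimal (Tate) models prime by prime gives $\deg\Delta_{A/K_n}=g_0\cdot\deg\Delta_{A/K}$ with $g_0$ the geometric degree of $K_n/K$, and the complementary constant-field factor $[K_n:K]/g_0$ cancels the same factor in $\log_p$ of the constant field of $K_n$; hence Proposition \ref{p:sel} over $K_n$ gives
\[
\log_p|\Sel_p(A^{(p)}/K_n)|\ \ge\ \frac{(p-1)\deg\Delta_{A/K}}{12}\,(\log_pq)\,[\Gamma:\Gamma^{p^n}]\ -\ |\eth_0(K_n)|,
\]
where $\eth_0(K_n)$ is the set of primes of $K_n$ above $\eth$ that split completely over $K_n\!\cdot\! k$. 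A decomposition-group count shows $|\eth_0(K_n)|=|\eth_1|\,[\Gamma:\Gamma^{p^n}]+O\bigl([\Gamma:\Gamma^{p^n}]^{1-1/d}\bigr)$: a place $v\in\eth_1$ splits completely in $L/K$ and over $k$, hence is unramified with residue degree one in $K_n/K$, contributes $[\Gamma:\Gamma^{p^n}]$ primes, and every one of them lies in $\eth_0(K_n)$; a place $v\in\eth\setminus\eth_1$ either has decomposition group of positive $\Z_p$-rank in $\Gamma$, so that only $O\bigl([\Gamma:\Gamma^{p^n}]^{1-1/d}\bigr)$ primes lie above it, or else splits completely in $L$ but not over $k$, in which case no prime above it lies in $\eth_0(K_n)$. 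Thus the right-hand side above is $\bigl(\frac{(p-1)\deg\Delta_{A/K}}{12}\log_pq-|\eth_1|\bigr)[\Gamma:\Gamma^{p^n}]+o\bigl([\Gamma:\Gamma^{p^n}]\bigr)$.

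\emph{The upper bound.} The Hochschild--Serre sequence for flat cohomology along $L/K_n$ embeds $\Sel_p(A^{(p)}/K_n)$, modulo a subgroup of $\coh^1(\Gamma^{p^n},A^{(p)}_p(L))$, into $\Sel_p(A^{(p)}/L)^{\Gamma^{p^n}}$. Since $A^{(p)}_p(\bar K^s)\cong\Z/p\Z$, the finite group $A^{(p)}_p(L)$ carries a $\Gamma$-action through a quotient of order prime to $p$, hence trivial, so that subgroup has order at most $p^d$; dualizing,
\[
\log_p|\Sel_p(A^{(p)}/K_n)|\ \le\ \dim_{\F_p}\bigl(\bar X^{(p)}/I_n\bar X^{(p)}\bigr)+d,\qquad \bar X^{(p)}:=\Sel_p(A^{(p)}/L)^\vee.
\]
From the exact sequence $0\to A^{(p)}(L)/p\to\Sel_p(A^{(p)}/L)\to\Sha(A^{(p)}/L)[p]\to0$ one sees that $\bar X^{(p)}$ is an extension of a subquotient of $X^{(p)}/pX^{(p)}$ (of generic rank $m$) by the dual of $A^{(p)}(L)/p$; the latter agrees up to finite groups with the mod-$p$ reduction of the dual of $A^{(p)}(L)\otimes\Q_p/\Z_p$, which is $\Z_p$-torsion-free and, as a quotient of $X^{(p)}$, is $\Lambda_\Gamma$-torsion, hence vanishes after localization at $(p)$ and so has generic $\F_p[[\Gamma]]$-rank $0$. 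Therefore $\bar X^{(p)}$ is finitely generated over $\F_p[[\Gamma]]$ of generic rank at most $m$, and the last display yields $\log_p|\Sel_p(A^{(p)}/K_n)|\le m\,[\Gamma:\Gamma^{p^n}]+o\bigl([\Gamma:\Gamma^{p^n}]\bigr)$.

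Comparing the two bounds and dividing by $[\Gamma:\Gamma^{p^n}]\to\infty$ gives $m\ge\frac{(p-1)\deg\Delta_{A/K}}{12}\log_pq-|\eth_1|$, as claimed. The step I expect to take the most care is making Proposition \ref{p:sel} genuinely applicable over the layers $K_n$ --- which may be ramified and may enlarge the constant field --- namely the transformation law for $\Delta_{A/K_n}$, the cancellation of the constant-field factor, and (when $p=2$) the behaviour of the auxiliary set $\eth'$ under base change; and, hand in hand with this, checking that every error term above is genuinely $o\bigl([\Gamma:\Gamma^{p^n}]\bigr)$ rather than merely $O\bigl([\Gamma:\Gamma^{p^n}]\bigr)$. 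The decisive point for the latter is that a prime of $K$ not lying in $\eth_1$ has decomposition group of positive $\Z_p$-rank in $\Gamma$, so only a negligible proportion of the primes of $K_n$ above $\eth$ can contribute to $\eth_0(K_n)$.
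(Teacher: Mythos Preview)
Your argument is correct and reaches the same conclusion, but the route differs from the paper's in a meaningful way. The paper does \emph{not} work directly in the $\Z_p^d$-tower: it first invokes Lemma \ref{l:special} (built on Monsky's theorem for $p$-adic power series, Lemma \ref{l:monsky}) to pass to an intermediate $\Z_p$-extension $L^{(d-1)}/K$ with the same elementary $\mu$-invariants and with $\mathfrak b_{L^{(d-1)}}=\mathfrak b_L$, and then applies Proposition \ref{p:sel} together with the full control Lemma \ref{l:control} along the layers $\mathtt K^{(n)}$ of that $\Z_p$-extension; the counting formula of Lemma \ref{l:counting} then identifies $p^n\cdot m$ with $\log_p|\Sel_p(A^{(p)}/\mathtt K^{(n)})|+\mathrm O(1)$. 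You instead stay in the full $\Z_p^d$-tower, apply Proposition \ref{p:sel} at every $K_n$, and replace the control lemma by the one-sided bound coming from the kernel of restriction together with Hilbert--Kunz asymptotics for $\F_p[[\Gamma]]$-modules (so that a module of generic rank $r$ contributes $r\cdot p^{nd}+o(p^{nd})$ to $\dim_{\F_p}(\,\cdot\,/I_n)$). Your approach is lighter for this particular statement, since you only need an \emph{upper} bound on $|\Sel_p(A^{(p)}/K_n)|$ in terms of $m$, and the kernel of restriction is cheap to bound because $A^{(p)}_p(L)$ is at most $\Z/p\Z$ with trivial $\Gamma$-action. The paper's heavier machinery, on the other hand, is not wasted: the specialization Lemma \ref{l:special} is exactly what is reused to prove the sharper Proposition \ref{p:gen}, where one must match the entire list of elementary $\mu$-invariants and where a mere growth-rate comparison would not suffice.

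Two small points to tighten in your write-up. First, the $O([\Gamma:\Gamma^{p^n}]^{1-1/d})$ error you assert for torsion $\F_p[[\Gamma]]$-modules is stronger than what Monsky's Hilbert--Kunz theorem gives in general; but you only use $o([\Gamma:\Gamma^{p^n}])$, which follows immediately from the existence of the Hilbert--Kunz limit, so state it that way. Second, for $p=2$ the set $\eth$ over $K_n$ need not be exactly the pullback of $\eth$ over $K$, since a non-split multiplicative place can become split; the paper records (at the start of \S\ref{su:last}) that any place where the $\eth/\eth'$ membership changes has $K_{n,w}\neq K_v$, hence nontrivial (so positive-rank) decomposition group in $\Gamma$, and therefore contributes only $o(p^{nd})$ places---exactly the mechanism you already use for $v\in\eth\setminus\eth_1$. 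With these two clarifications your argument is complete.
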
 
If $L$ contains $K_p^{(\infty)}$, the constant
$\Z_p$-extension over $K$, then $X$ and $X^{(p)}$ are torsion \cite{ot09,tan13}, in this case $|\eth_1|=0$.

\begin{proposition}\label{p:constant}
If $L$ contains $K_p^{(\infty)}$, then the $\mu$-rank $m$ of $X^{(p)}$ equals $ \frac{ (p-1)\deg\Delta_{A/K}}{12}\cdot \log_p q$.
Moreover, if $p^{\alpha_1}, ..., p^{\alpha_m}$, $\alpha_1\geq\cdots\geq\alpha_m$, are the elementary $\mu$-invariants of $X^{(p)}$, then those of $X$
are $p^{\alpha_1-1}, ..., p^{\alpha_{m'}-1}$, where $m'$ is the greatest $i$ such that $\alpha_i>1$.
\end{proposition}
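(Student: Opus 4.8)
The plan is to extract both statements from the Frobenius isogeny $F\colon A\to A^{(p)}$ and its dual, the Verschiebung $V\colon A^{(p)}\to A$, viewed over $L$. Since $A/K$ is ordinary, $\ker F$ is a group scheme of multiplicative type of order $p$ and $\ker V$ is its Cartier dual, finite \'etale of order $p$; moreover $V\circ F=[p]_A$ and $F\circ V=[p]_{A^{(p)}}$. Functoriality of Selmer groups gives $\Lambda_\Gamma$-linear maps $F^{\vee}\colon X^{(p)}\to X$ and $V^{\vee}\colon X\to X^{(p)}$ on the Pontryagin duals with $F^{\vee}V^{\vee}=p\cdot\mathrm{id}_X$ and $V^{\vee}F^{\vee}=p\cdot\mathrm{id}_{X^{(p)}}$; in particular $\ker F^{\vee}\subseteq X^{(p)}[p]$.

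The key claim is that, \emph{modulo pseudo-null modules}, $F^{\vee}$ is surjective and $V^{\vee}$ is injective. Granting it, $V^{\vee}(X)=V^{\vee}F^{\vee}(X^{(p)})=pX^{(p)}$ up to pseudo-null, and $X\cong V^{\vee}(X)$ up to pseudo-null, so $X\sim pX^{(p)}$ as $\Lambda_\Gamma$-modules. On the $p$-part this is the second assertion: if the elementary $\mu$-invariants of $X^{(p)}$ are $p^{\alpha_1}\geq\cdots\geq p^{\alpha_m}$, those of $pX^{(p)}$ are the nonzero ones among the $p^{\alpha_i-1}$, namely $p^{\alpha_1-1},\dots,p^{\alpha_{m'}-1}$ with $m'$ the largest index with $\alpha_i>1$. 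For the first assertion, the exact sequence $0\to\ker F^{\vee}\to X^{(p)}\xrightarrow{F^{\vee}}X\to0$ (valid up to pseudo-null) together with $pX^{(p)}\sim X\cong X^{(p)}/\ker F^{\vee}$ and the identity $pX^{(p)}=X^{(p)}/X^{(p)}[p]$ forces, via a comparison of characteristic ideals (legitimate since $X^{(p)}$ is $\Lambda_\Gamma$-torsion by \cite{ot09,tan13}), that $\ker F^{\vee}\sim X^{(p)}[p]$, a module of $\mu$-rank $m$. On the other hand $\ker F^{\vee}$ is the Pontryagin dual of $\mathrm{coker}(F_*)$ on $\Sel_{p^\infty}$ over $L$; this module is killed by $p$, and its $\Lambda_\Gamma/(p)$-rank is the defect of the $\ker F$-descent, which is exactly the local count behind Propositions \ref{p:sel}--\ref{p:p} and, through \eqref{e:del}, evaluates to $\frac{(p-1)\deg\Delta_{A/K}}{12}\cdot\log_p q$. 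Hence $m=\frac{(p-1)\deg\Delta_{A/K}}{12}\cdot\log_p q$, with Proposition \ref{p:p} (which yields $m\geq\frac{(p-1)\deg\Delta_{A/K}}{12}\log_p q$ once $|\eth_1|=0$) serving as a check.

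The substance lies in the two near-surjectivity/near-injectivity statements --- equivalently, that $\ker F_*$ and $\mathrm{coker}(V_*)$ on $\Sel_{p^\infty}$ over $L$ become pseudo-null after taking duals --- and in the precise local evaluation of the descent defect. I would prove these by the standard descent applied to the fppf exact sequences $0\to\ker F\to A\xrightarrow{F}A^{(p)}\to0$ and $0\to\ker V\to A^{(p)}\xrightarrow{V}A\to0$, over $L$ and over all its completions, bounding the relevant kernels and cokernels in terms of the Selmer-style cohomology of $\ker F$ and of $\ker V$ and absorbing the rational-point terms. The hypothesis $L\supseteq K_p^{(\infty)}$ is used throughout: it makes $X$ and $X^{(p)}$ torsion and $|\eth_1|=0$, and over the constant $\Z_p$-extension the local conditions at the good ordinary places become cohomologically negligible in the limit, so the only surviving contributions come from the finitely many multiplicative places --- and it is there, through the valuations of the minimal discriminant and the relation \eqref{e:del}, that $\deg\Delta_{A/K}$ enters. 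Carrying out this local bookkeeping at the bad places precisely enough to pin down the $\Lambda_\Gamma/(p)$-rank on the nose (rather than up to a bounded error, as in Propositions \ref{p:sel} and \ref{p:aap}) is, I expect, the main obstacle; the remainder is formal $\Lambda_\Gamma$-module algebra resting on the two isogeny sequences.
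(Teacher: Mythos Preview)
Your strategy is genuinely different from the paper's. The paper never analyzes $F^\vee$ and $V^\vee$ directly over $L$; instead it deduces Proposition~\ref{p:constant} as the special case of Proposition~\ref{p:gen} in which $L\supset K_p^{(\infty)}$ forces $p\nmid\Theta_L$ and $\eth_1=\emptyset$, and proves the latter by first specializing via Monsky's theorem (Lemma~\ref{l:special}) to an intermediate $\Z_p$-extension $L^{(d-1)}/K$ preserving the elementary $\mu$-invariants, then combining a control lemma and a counting lemma (Lemmas~\ref{l:control}, \ref{l:counting}) to read those invariants off from the growth of $|\Sel_{p^\nu}(A/K^{(n)})|$ and $|\Sel_{p^{\nu+1}}(A^{(p)}/K^{(n)})|$ along the finite layers, which are supplied by Propositions~\ref{p:sel} and~\ref{p:aap}. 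Your route, aiming directly for $X\sim pX^{(p)}$, is more structural and would be attractive if completed.

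There are, however, real gaps. First, the pseudo-nullity of $(\ker F_*)^\vee$ and $(\coker V_*)^\vee$ is asserted but not established; dually these require controlling $\Sel(C_p/L)$-type objects over the infinite extension, which the paper deliberately avoids---its bound (Proposition~\ref{p:selcp}) is only over $K$, and even along the layers one needs $p\nmid\Theta_L$ to keep the class-group term $\hslash$ bounded (see the proof of Proposition~\ref{p:gen}). Second, your local picture is inverted. You write that the surviving contributions come from the multiplicative places, but Lemma~\ref{l:mult} shows $\ker(j_v)$ is essentially trivial there; the large local terms arise at the \emph{supersingular} places, where $|\ker(j_v)|=p^{\epsilon_v}q_v^{n_v}$ (Lemma~\ref{l:image}), and the identity \eqref{e:discqvnv} equating $\tfrac{(p-1)\deg\Delta_{A/K}}{12}$ with $\sum_{v\in S_{ss}}n_v\deg v$ is the heart of Proposition~\ref{p:qvnv}. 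Without that correction the local bookkeeping you propose cannot recover the main term, so the computation of the $\Lambda_\Gamma/(p)$-rank of $\ker F^\vee$---and hence the value of $m$---remains open in your argument.
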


For a finite extension $F/K$, let $\mathfrak w_F$ denote the $p$-completion of the divisor class group of of $kF$ and for a $\Z_p^e$ sub-extension $M/K$ of $L/K$, put $\mathfrak w_M:=\varprojlim_{K\subset F\subset M}\mathfrak w_F$, which is finitely generated torsion over
$\Lambda_{\Gal(kM/k)}$. Actually, by \cite{crw87}, the characteristic ideal of $\mathfrak w_M$ has a generator
$\Theta_M:=\varprojlim_F\Theta_F$, where basically for each $F$, $\Theta_F\in\Z_p[\Gal(kF/k)]$ is the Stickelberger element defined in \cite[\S V.1.1]{Ta84},
in particular, we have
\begin{equation}\label{e:stick}
p_{L/M}(\Theta_L)=\Theta_M\cdot *,
\end{equation}
where $p_{L/M}: \Lambda_{\Gal(Lk/k)}\longrightarrow  \Lambda_{\Gal(Mk/k)}$ is the continuous $\Z_p$-algebra homomorphism extending the quotient map $\Gal(Lk/k)\longrightarrow \Gal(Mk/k)$ and  $*\in \Lambda_{\Gal(Mk/k)}$ is a fudge factor not divisible by $p$. 

For simplicity, 
we shall identify 
$\Lambda_{\Gal(Mk/k)}$ with 
$\Lambda_{\Gal(M/K)}$, and view $\Theta_L$, $\mathfrak w_L$ as objects over $\Lambda_\Gamma$. 
In the special case where $L=K_p^{(\infty)}$, the module $\mathfrak w_L$ has trivial $\mu$-rank, hence $\Theta_L$ is not divisible
by $p$. To see this, let $\mathcal Y$ be the complete smooth curve defined over the constant field of $k$,
having $k$ as its function field. For every finite sub-extensions $F/K$ of $L/K$,
we have the exact sequence
$$\xymatrix{0\ar[r] & \mathfrak w_F^0 \ar[r] & \mathfrak w_F \ar^-\deg[r] & \Z_p \ar[r] & 0}$$
and $\mathfrak w_F^0[p]$
is contained in the subgroup of $p$-division points of the Jacobian variety of $\mathcal Y$.
Therefore, the order of $\mathfrak w_F^0[p]$ is bounded, and hence $\mathfrak w_L[p]=0$.
In general, \eqref{e:stick} says that if $L$ contains $K_p^{(\infty)}$, then $\Theta_L$ is not divisible by $p$. Also,
in this case, $\eth_1=\emptyset$. Hence Proposition
\ref{p:constant} is a special case of the following proposition. 

\begin{proposition}\label{p:gen}
If $\Theta_L$ is not divisible by $p$ and $\eth_1=\emptyset$, then the $\mu$-rank $m$ of $X^{(p)}$ equals $ \frac{ (p-1)\deg\Delta_{A/K}}{12}\cdot \log_p q$.
Moreover, if $p^{\alpha_1}, ..., p^{\alpha_m}$, $\alpha_1\geq\cdots\geq\alpha_m$, are the elementary $\mu$-invariants of $X^{(p)}$, then those of $X$
are $p^{\alpha_1-1}, ..., p^{\alpha_{m'}-1}$, where $m'$ is the greatest $i$ such that $\alpha_i>1$.

\end{proposition}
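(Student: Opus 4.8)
The plan is to deduce the statement from the finite-layer versions of Propositions \ref{p:sel} and \ref{p:aap}, applied along the tower $L/K$, by showing that the error terms grow sublinearly and then passing to the limit. Let $K_n$ be the layers of $L/K$, $\Gamma_n=\Gal(L/K_n)$ and $N_n=[K_n:K]$. Each $A/K_n$ is again ordinary with everywhere semistable reduction, and $\tfrac{(p-1)\deg\Delta_{A/K_n}}{12}\cdot\log_p q_n=mN_n$ with $m=\tfrac{(p-1)\deg\Delta_{A/K}}{12}\cdot\log_p q$ and $q_n$ the order of the constant field of $K_n$, because the minimal discriminant divisor of $A$ over $K_n$ is the pullback of $\Delta_{A/K}$ and $\mathbb F_p$-normalised degrees multiply by $N_n$. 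Since the Frobenius twist commutes with base change, Propositions \ref{p:sel} and \ref{p:aap} over $K_n$ give
\[\log_p|\Sel_p(A^{(p)}/K_n)|=mN_n+\epsilon_1(K_n),\qquad \log_p|\Sel_{p^{\nu+1}}(A^{(p)}/K_n)|=mN_n+\log_p|\Sel_{p^{\nu}}(A/K_n)|+\epsilon_2(K_n)\]
for all $\nu\ge1$, where $|\epsilon_1(K_n)|,|\epsilon_2(K_n)|\ll\hslash_n+|\eth_0^{(n)}|$, the subscript $n$ denoting the invariants of \S\ref{su:main} computed over $K_n$.

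The main step is that $\epsilon_1(K_n),\epsilon_2(K_n)=o(N_n)$, and this is where the two hypotheses enter. Since $\eth_1=\emptyset$, every $v\in\eth$ has non-trivial decomposition group in $\Gamma$, so the number of places of $K_n$ above $v$ is $o(N_n)$; summing over the finite set $\eth$ gives $|\eth_0^{(n)}|=o(N_n)$. For $\hslash_n$, geometric class field theory bounds it by $\dim_{\mathbb F_p}(\mathfrak w_{K_n}/p\,\mathfrak w_{K_n})+O(1)$; since $\Theta_L$ generates the characteristic ideal of $\mathfrak w_L$ and is prime to $p$, the module $\mathfrak w_L$ has vanishing $\mu$-invariant, and the controlled comparison $\mathfrak w_{K_n}\sim(\mathfrak w_L)_{\Gamma_n}$ then forces $\dim_{\mathbb F_p}(\mathfrak w_{K_n}/p)=o(N_n)$, hence $\hslash_n=o(N_n)$. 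The places of $\eth'$, relevant only when $p=2$, are finite in number and contribute $O(1)$. Thus $\epsilon_1(K_n),\epsilon_2(K_n)=o(N_n)$.

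Now work with the increments $\delta_\nu(B/K_n):=\log_p|\Sel_{p^{\nu+1}}(B/K_n)|-\log_p|\Sel_{p^{\nu}}(B/K_n)|$. Subtracting consecutive instances of the displayed identities yields $\delta_\nu(A^{(p)}/K_n)=\delta_{\nu-1}(A/K_n)$ \emph{exactly} for $\nu\ge2$, together with $\delta_0(A^{(p)}/K_n)=mN_n+\epsilon_1(K_n)$ and $\delta_1(A^{(p)}/K_n)=\delta_0(A/K_n)+\epsilon_2(K_n)-\epsilon_1(K_n)$. On the other hand, writing $\Sel_{p^\infty}(B/K_n)=(\mathbb Q_p/\mathbb Z_p)^{r_n(B)}\oplus\overline\Sha(B/K_n)$, one has for $\nu\ge0$ that $\delta_\nu(B/K_n)=r_n(B)+\#\{\text{elementary divisors of }\overline\Sha(B/K_n)\text{ of exponent}>\nu\}+O(1)$, the $O(1)$ coming from the torsion of $B(K_n)$. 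Letting $\nu\to\infty$ in the exact identity gives $r_n(A^{(p)})=r_n(A)$; feeding this back, for every $k\ge1$ one obtains $\#\{\text{elem.\ div.\ of }\overline\Sha(A^{(p)}/K_n)\text{ of exponent}\ge k+1\}=\#\{\text{elem.\ div.\ of }\overline\Sha(A/K_n)\text{ of exponent}\ge k\}+o(N_n)$, and from $\delta_0$ that $\#\{\text{all elem.\ div.\ of }\overline\Sha(A^{(p)}/K_n)\}=mN_n+o(N_n)$.

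Finally, pass to the limit. The control theorem for $\Sel_{p^\infty}$ along $L/K$, which holds with error $o(N_n)$ since $L/K$ is ramified at only finitely many places, identifies $\Sel_{p^\infty}(B/K_n)^\vee$ with $X_B/\omega_nX_B$ up to $o(N_n)$ in each of the above counts, where $X_A=X$ and $X_{A^{(p)}}=X^{(p)}$. Granting that $X^{(p)}$ (hence $X$) is $\Lambda_\Gamma$-torsion (implicit in the finiteness of the $\mu$-rank), let $p^{\beta_1}\ge\cdots$ and $p^{\alpha_1}\ge\cdots\ge p^{\alpha_M}$ be the elementary $\mu$-invariants of $X$ and $X^{(p)}$. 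Standard multivariable Iwasawa asymptotics give $\#\{\text{elem.\ div.\ of }\overline\Sha(A/K_n)\text{ of exponent}\ge k\}=N_n\cdot\#\{j:\beta_j\ge k\}+o(N_n)$, and likewise for $A^{(p)}$. Dividing the identities of the previous paragraph by $N_n$ and letting $n\to\infty$ yields $M=m$ --- so the $\mu$-rank of $X^{(p)}$ is $m$ --- and $\#\{i:\alpha_i\ge k+1\}=\#\{j:\beta_j\ge k\}$ for all $k\ge1$, i.e.\ $\{\beta_j\}=\{\alpha_i-1:\alpha_i\ge2\}$, which is exactly the asserted relation with $m'=\max\{i:\alpha_i>1\}$. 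The genuinely hard part is the sublinear control of $\epsilon_1,\epsilon_2$ in the second paragraph: making quantitative along the $\mathbb Z_p^d$-tower the class-field-theoretic input ($\Theta_L\nmid p$, keeping the $p$-part of the divisor class groups $\mathfrak w_{K_n}$ of $\mathbb F_p$-dimension $o(N_n)$) and the splitting input ($\eth_1=\emptyset$, giving $|\eth_0^{(n)}|=o(N_n)$); everything else is structure theory of $\Lambda_\Gamma$-modules together with a control theorem already available in the references cited. (One can also phrase the comparison through the $\Lambda_\Gamma$-homomorphisms induced by the Frobenius and Verschiebung isogenies, whose composites are multiplication by $p$ and whose defect is the differential part of $\coh^1_{\mathrm{fppf}}$ of the connected $p$-torsion; but the layer-by-layer argument is more direct given Propositions \ref{p:sel} and \ref{p:aap}.)
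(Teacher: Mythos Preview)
Your outline is correct in spirit but takes a genuinely different route from the paper. The paper does \emph{not} argue directly in the $\Z_p^d$-tower with $o(N_n)$ errors; instead it invokes the specialization machinery of \S\ref{su:spec} (Lemma \ref{l:special}, built on Monsky's theorem) to pass to an intermediate $\Z_p$-extension $L^{(d-1)}/K$ preserving the elementary $\mu$-invariants of $X$ and $X^{(p)}$, keeping $\eth_1=\emptyset$ and $p\nmid\Theta_{L^{(d-1)}}$. Once $d=1$, the hypotheses force $|S_b(K^{(n)})|=\mathrm O(1)$ and $|\mathfrak w_{K^{(n)}}[p]|=\mathrm O(1)$, hence $\hslash_n,|\eth_{0,n}|=\mathrm O(1)$; then Lemma \ref{l:counting} gives the exact formula $\log_p|\Sel_{p^\nu}(A_\ell/K^{(n)})|=p^n\sum_i\alpha_{\ell,i,\nu}+\mathrm O(1)$, and matching the coefficients of $p^n$ over all $\nu$ recovers the $\mu$-invariants.

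Your direct approach trades this for $o(N_n)$ control, which is cleaner conceptually but rests on several claims you call ``standard'' without justification: (i) a Cuoco--Monsky type asymptotic $\log_p|Z/(\omega_n,p^\nu)Z|=N_n\sum_i\min(\alpha_i,\nu)+o(N_n)$ for torsion $\Lambda_\Gamma$-modules when $d\ge 2$ (one must check that pseudo-null pieces and the non-$p$ part contribute only $o(N_n)$ after reducing mod $p^\nu$); (ii) a control theorem for $\Sel_{p^\nu}$ along $K_n\subset L$ with total error $o(N_n)$ --- the paper proves this only for $d=1$ (Lemma \ref{l:control}), and in your setting you must bound both the number of places of $K_n$ above each $v\in\mathfrak b$ and the local cohomology at each such place; (iii) the bound $\hslash_n=o(N_n)$, which needs an actual descent comparison between $\mathfrak w_{K_n}$ and $(\mathfrak w_L)_{\Gamma_n}$. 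All three are fillable, but none is a one-liner. What the paper's specialization to $d=1$ buys is precisely the upgrade of every $o(N_n)$ to $\mathrm O(1)$, so that the multivariable growth estimates are never needed.
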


Since the Frobenius and Verschiebung induce pseudo isomorphisms
between the non-$p$ parts of $X$ and $X^{(p)}$, the proposition implies the characteristic ideal of $X^{(p)}$ is the $q^{\frac{(p-1)\deg\Delta_{A/K}}{12}}$ multiple of that of $X$.
If $L=K_p^{(\infty)}$, this is also a consequence of the main theorem of \cite{lltt16}.

\subsection{Notation}\label{su:not}
For a field $F$, let $\bar F$ and $\bar F^s$ denote its algebraic closure and separable closure, and denote 
$G_F=\Gal(\bar F^s/F)$.
For a place $v$, let $\O_v$, $\pi_v$ and $\F_v$ denote the ring of integers, an uniformizer and the residue field.
Write $q_v$ for $|\F_v|$.

Let $S_{ss}$ denote the set of place $v$ of $K$ at which $A$ has supersingular reduction. 
For a set $S$ of places of $K$ and an algebraic extension $F$, let $S(F)$ denote the set of places of $F$ sitting over $S$. 
For an endomorphism $\varphi$ of an abelian group $H$, let $H[\varphi]$ denote the kernel. 
Use $^\vee$ for Pontryagin dual, $\sim$ for pseudo isomorphism. 

In this note we use  flat or Galois cohomology.
Let
$$\mathsf F:A\longrightarrow A^{(p)}\;\; \text{and}\;\; \mathsf V:A^{(p)}\longrightarrow A$$ 
be the Frobenius and the Verschiebung homomorphisms. We have the exact sequences

\begin{equation}\label{e:fec}
\xymatrix{0 \ar[r]  &  C_p  \ar[r] & A_p \ar[r]^-{\mathsf{F}} &E_p^{(p)} \ar[r] & 0,}
\end{equation}
as well as
\begin{equation}\label{e:ve}
\xymatrix{0 \ar[r]  & E_p^{(p)}  \ar[r] & A_p^{(p)} \ar[r]^-{\mathsf{V}} & C_p \ar[r] & 0,}
\end{equation}
where $C_{p}=\ker \mathsf{F}$ is connected and $E_p^{(p)}=\ker \mathsf{V}$, $\acute{\text{e}}$tale (see \cite{lsc10}). 
For a field $F$ containing $K$, we have $A_p^{(p)}(F)=E_p^{(p)}(F)$. In particular, $k=K(E_p^{(p)}(\bar K^s))$.
Note that for $p=2$, $k=K$,
because the non-trivial point of $E_p^{(p)}(\bar K^s)$ is the only Galois conjugate of itself.

\subsection{The proofs}\label{su:pf}
The key ingredient in the proof of Proposition \ref{p:sel}  is local, concerning the kernel of the natural map 
$\coh^1(K_v,E_p^{(p)})\longrightarrow \coh^1(K_v,A^{(p)})$,
especially when $v$ is a place of supersingular reduction. 
Lemma \ref{l:mult}, Lemma \ref{l:good} and Lemma \ref{l:sscond} treat different types of reduction and provide us
criteria, in terms of the the conductor of the corresponding cyclic extension over $k_v$, for determining if an element 
of $\coh^1(K_v,E_p^{(p)})$ is in such kernel. 

With the local criteria available and with the help of global class field, in Proposition \ref{p:qvnv}, we determine the order of 
$\Sel(E_p^{(p)}/K)$, the $E_p^{(p)}$-part of $\Sel_p(A^{(p)}/K)$. 
In doing so, we find an interesting phenomena
that the discrepancy between the two discriminants
as described in \eqref{e:del} is solely contributed by supersingular places (see \eqref{e:discqvnv}). 
Next, in Proposition \ref{p:selcp} we determine  the order of 
$\Sel(C_p/K)$, the $C_p$-part of $\Sel_p(A/K)$, by using the Poitou-Tate duality \cite{ces15}.
Then Proposition \ref{p:sel} is proved at the end of \S\ref{su:gh1}, as a consequence of the above two propositions. 
Proposition \ref{p:aap}  is proved in \S \ref{su:p2} by applying Cassels-Tate duality.

In \S\ref{su:spec}, using a theorem of Monsky \cite{monsky}, we establish a method of reducing the proofs of Proposition \ref{p:p},
\ref{p:gen} to the $d=1$ case.
Since the method can be applied to more general situations, we loosen the condition in that subsection by allowing $A$ to be an ordinary
abelian variety defined over a global field $K$.  The main result is summarized in Lemma \ref{l:special}. 
As a consequence of this lemma and Proposition \ref{p:sel}, \ref{p:aap}, the proofs of Proposition \ref{p:p}, \ref{p:gen} are given in the final subsection \S\ref{su:last}.


\section{Local fields}\label{s:loc}
At each place $v$ of $K$, we have the long exact sequence
\begin{equation}\label{e:jv}
\xymatrix{\cdots\ar[r] &A^{(p)}(K_v) \ar[r]^-{\mathsf V_* } & A(K_v) \ar[r]^-{\partial} & \coh^1(K_v,E_p^{(p)}) \ar[r]^-{j_v} & \coh^1(K_v,A^{(p)})\ar[r] &\cdots}
\end{equation}
derived from $\xymatrix{0\ar[r] & E_p^{(p)}\ar[r]^j & A^{(p)} \ar[r]^-{\mathsf V} & A\ar[r] & 0}$. 
The aim of this section is to determine $\ker(j_v)=\coker(\mathsf V_*)$.
For a place $w$ of $k$ sitting over $v$, the abelian group
\begin{equation}\label{e:h1hom}
\coh^1(k_w,E_p^{(p)})=\Hom (G_{k_w}, \Z/p\Z)=\Hom(k_w^*/(k_w^*)^p,\Z/p\Z),
\end{equation}
so each $\xi\in\coh^1(k_w,E_p^{(p)})$ determines a degree $p$ cyclic extension $k_{w,\xi}/k_w$.

Let $\ord_v$ denote the valuation on $\bar K_v$ having $\ord_v(\pi_v)=1$.

\subsection{Frobenius and Verschiebung}\label{su:fv}

Let $\mathscr F$ (resp. $\mathscr F^{(p)}$) denote the formal group law associated to $A$ (resp. $A^{(p)}$)
over $K_v$. Since $A$ has semi-stable reduction, $\mathscr F$ is stable under local field extensions.
Let $\bar A$ denote the reduction of $A$ at $v$. 
For a place $w$ of an algebraic extension $F$ of $K$, sitting over $v$, the pre-image
$A_{o}(F_w)$ of $0\in\bar A(\F_w)$ under the reduction map $A(F_w)\longrightarrow \bar A(\F_w)$ is identified with $\mathscr F(\mathfrak m_w)$ via a bijection $\iota$. Let $\iota^{(p)}$ be the corresponding bijection associated to $A_o^{(p)}(F_w)$.

Let $\mathscr P\in \O_v[[t]]$ be the unique power series fitting into the commutative diagram
\begin{equation}\label{e:vf}
\xymatrix{\mathscr{F}(\mathfrak m_w) \ar[r]^-{\mathrm{Frob}_{p}} \ar[d]^-{\iota}& \mathscr{F}^{(p)}(\mathfrak m_w)
\ar[r]^-{\mathscr P}\ar[d]^-{\iota^{(p)}}& 
\mathscr{F}(\mathfrak m_w)\ar[d]^-{\iota}\\
A_{o}(F_w) \ar[r]^-{\mathsf F}  & A_{o}^{(p)}(F_w)\ar[r]^-{\mathsf V}& A_{o}(F_w) .}
\end{equation}

An element $\xi\in\mathfrak m_w$ satisfies $\mathscr P(\xi)=0$
if and only if $\iota^{(p)}(\xi)\in E_{p}^{(p)}(F_w)\cap A_{o}^{(p)}(F_w)$. 

Suppose the formal group law of $\mathscr F$ (resp. $\mathscr F^{(p)}$) is given by the $\O_v$-coefficient power series $f(X,Y)$
(resp. $f^{(p)}(X,Y)$). 

If $t_i$ is a solution to $\mathscr{P}(t)=0$, then $\mathscr{P}(f^{(p)}(t,t_i))=\mathscr{P}(t)$, furthermore, since $A/K_v$ is ordinary, by \cite[\S IV.7.2]{sil86}, $\mathscr{P}'(0)\not=0$, and hence $\mathscr{P}'(t_i)\not=0$, too. 

\begin{lemma}\label{l:ss} We have $\mathscr P(t)=u(t)\cdot P(t)$, where $u(t)$ is a unit in $\O_v[[t]]$ and $P(t)$ is the associated distinguished polynomial.  The polynomial $P(t)$ is separable with $P(0)=0$.
If $v$ is a supersingular place, then $\deg P=p$; if $v$ is ordinary, then $P(t)=t$.
\end{lemma}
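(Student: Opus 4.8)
The plan is to analyze the power series $\mathscr P(t)\in\O_v[[t]]$ via Weierstrass preparation and then pin down the distinguished polynomial $P(t)$ using the structure of the $p$-torsion described in the exact sequences \eqref{e:fec}, \eqref{e:ve}. First I would observe that since $A/K_v$ is ordinary, $\mathscr P'(0)\neq 0$ (as already recorded from \cite[\S IV.7.2]{sil86}), so in particular $\mathscr P(t)$ is not identically $0$ and $t\nmid$ its unit part; by the Weierstrass preparation theorem over the complete local ring $\O_v$ we may write $\mathscr P(t)=u(t)P(t)$ with $u(t)\in\O_v[[t]]^\times$ and $P(t)$ distinguished. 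Since the diagram \eqref{e:vf} identifies $\mathsf V\circ\mathsf F$ with multiplication by $p$ on the formal group, the composite $\mathscr P(t)\circ(\text{Frobenius series})$ is the multiplication-by-$p$ series $[p]_{\mathscr F}(t)$; comparing with the fact that $[p]_{\mathscr F}(t)$ has $t$ as a factor gives $P(0)=0$, i.e. $t\mid P(t)$.

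Next I would count the number of roots to establish the degree. The roots of $\mathscr P$ in $\mathfrak m_w$ (for a large enough extension $F_w$) are exactly the $\xi$ with $\iota^{(p)}(\xi)\in E_p^{(p)}(F_w)\cap A_o^{(p)}(F_w)$, as stated in the excerpt. So $\deg P$ equals the order of the connected part of the finite group scheme $E_p^{(p)}$, equivalently the number of points of $E_p^{(p)}$ landing in the formal group. In the supersingular case $A_p$ is connected, so from \eqref{e:fec} $C_p=A_p$ has order $p^2$ and $\mathsf F$ is the zero map, whence from \eqref{e:ve} $E_p^{(p)}=A_p^{(p)}$ is connected of order $p^2$; after tensoring up, \emph{all} $p^2$ points of $E_p^{(p)}$ reduce to $0$, so $E_p^{(p)}(F_w)\cap A_o^{(p)}(F_w)=E_p^{(p)}(F_w)$ has $p$ elements over a suitable $F_w$ (the étale quotient being trivial forces the geometric points to fill a group of order $p$... ) — here I must be careful: what is separable and gives actual roots of $P$ is the étale part of the relevant scheme, and I will argue $\deg P=p$ by identifying $P(t)=0$ with the $p$-torsion of the formal group $\mathscr F^{(p)}$, which for a supersingular (height $2$) formal group over $\O_v$ has a separable degree-$p$ factor of the full degree-$p^2$ multiplication-by-$p$ series after one Frobenius. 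In the ordinary case, instead, $C_p$ is connected of order $p$, $\mathsf F$ is surjective onto the order-$p$ étale group $E_p^{(p)}$, and $E_p^{(p)}(F_w)\cap A_o^{(p)}(F_w)=\{0\}$ since the étale $p$-torsion injects into $\bar A^{(p)}$; hence $\mathscr P$ has only the root $t=0$ in each $\mathfrak m_w$, forcing $\deg P=1$ and therefore $P(t)=t$.

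Separability of $P$ I would deduce from $\mathscr P'(t_i)\neq 0$ at every root, which is already established in the paragraph preceding the lemma: since $u(t_i)\neq 0$, $\mathscr P'(t_i)=u'(t_i)P(t_i)+u(t_i)P'(t_i)=u(t_i)P'(t_i)$ at a root $t_i$ of $P$, so $P'(t_i)\neq 0$; as $P$ is monic over a domain and has no repeated root, it is separable. Combining: $P$ is a separable distinguished polynomial with $P(0)=0$, of degree $p$ in the supersingular case and equal to $t$ in the ordinary case.

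The main obstacle I anticipate is the degree count in the supersingular case: one must correctly translate ``$\iota^{(p)}(\xi)\in E_p^{(p)}(F_w)\cap A_o^{(p)}(F_w)$'' into a statement about how many \emph{separable} roots $\mathscr P$ actually has, i.e. distinguishing the scheme-theoretic kernel from its étale quotient. The clean way is to note that the roots of the distinguished polynomial $P$ are precisely the nonzero elements of $\mathscr F^{(p)}[p]$ that lie in the maximal ideal, which by the height-$2$ (supersingular) structure of $\mathscr F^{(p)}$ — equivalently, by $\mathsf V$ being purely inseparable of degree $p$ here, so that $\mathscr P$ itself accounts for the ``second'' factor of $p$ — form a group of order $p$ after a suitable ramified base extension; the separability of this factor is exactly the content of $\mathscr P'(0)\neq 0$ from ordinariness. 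I would present this via the factorization $[p]_{\mathscr F}=\mathscr P\circ(\text{Frob series})$ and a degree comparison of the associated distinguished polynomials, $\deg(\text{distinguished part of }[p]_{\mathscr F})=p^2$ in the supersingular case, the Frobenius contributing $p$ and $\mathscr P$ the remaining $p$.
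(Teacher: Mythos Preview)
There is a genuine gap. The factorization $\mathscr P(t)=u(t)P(t)$ with $u\in\O_v[[t]]^\times$ requires $\pi_v\nmid\mathscr P(t)$, i.e.\ that \emph{some} coefficient of $\mathscr P$ is a unit. Knowing $\mathscr P'(0)\neq 0$ only says the linear coefficient is a nonzero element of $\O_v$; at a supersingular place it has positive valuation (indeed $\ord_v z_1=n_v>0$, see \eqref{e:ord}), so this alone does not give Weierstrass preparation without a $\pi_v^m$ prefactor. Establishing $\pi_v\nmid\mathscr P$ is in fact the main point, and the paper proves it case by case: for good reduction one reduces mod $\pi_v$ and uses that the formal group of $\bar A$ has height $1$ or $2$, so $\overline{[p]}_{\mathscr F}\neq 0$ and hence $\bar{\mathscr P}\neq 0$; for multiplicative reduction one shows directly via Tate uniformization that $\mathsf V$ is an isomorphism on formal groups. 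Your degree-comparison idea $[p]_{\mathscr F}(t)=\mathscr P(t^p)$ in the final paragraph is exactly what is needed for the good-reduction case and would simultaneously give $\deg P=p^{h-1}$, but you do not carry it out, and you never treat multiplicative places (which are included in ``ordinary'' here).

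Separately, your supersingular discussion misstates the orders: both $C_p=\ker\mathsf F$ and $E_p^{(p)}=\ker\mathsf V$ have order $p$, and $E_p^{(p)}$ is \'etale over $K$. The correct count is that $E_p^{(p)}(\bar K_v)\simeq\Z/p\Z$, and at a supersingular place every point of $E_p^{(p)}(\bar K_v)$ reduces to $0$ (since the special fibre of $\ker\mathcal V$ is connected there), so $E_p^{(p)}(\bar K_v)\cap A_o^{(p)}(\bar K_v)$ has exactly $p$ elements; together with the separability you correctly derive from $\mathscr P'(t_i)\neq 0$, this gives $\deg P=p$. Your arguments for $P(0)=0$ and for the ordinary good-reduction case are fine.
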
 
\begin{proof}  The first assertion follows from the claim that $\pi_v$ does not divides $\mathscr P(t)$. For ordinary $v$, because
$E_p^{(p)}(\bar K_v)\cap A_{o}^{(p)}(\bar K_v)=\{0\}$, we know that $0$ is the only root of
$P(t)$ in $\bar K_v$. This implies that the distinguished polynomial $P(t)=t$.
For supersingular $v$, the group $E_p^{(p)}(\bar{K}_v)\cap A_{o}^{(p)}(\bar{K}_v)\simeq \Z/p\Z$, so $\deg P(t)=p$, furthermore, 
since $\mathscr P(t_i)=0$
and $\mathscr{P}'(t_i)\not=0$, we have $P(t_i)=0$ and $P'(t_i)\not=0$.

To prove the claim, we first consider the case where $v$ is a place of good reduction. The formal group law associated to $\bar A$ is given by $\bar{f}(X,Y):=f(X,Y) \pmod{(\pi_v)}$, which has height $1$ or $2$, 
so $\bar{\mathscr P}:=\mathscr P\pmod{(\pi_v)}$ is non-zero. This proves the claim.

For a multiplicative place $v$, we prove the claim by showing that the Verschiebung  
gives rise to an isomorphism $\mathscr F^{(p)}(\mathfrak m_v)\longrightarrow \mathscr F(\mathfrak m_v)$.
If $v$ is a split-multiplicative place and $\tilde Q$ is the local Tate period of $A$, then $\tilde Q^p$ is the local Tate period of 
$A^{(p)}$ and the Verschiebung is given by $K_v^*/\tilde Q^{p\Z}\longrightarrow K_v^*/\tilde Q^\Z$, induced from the identity map on $K_v^*$.
This implies $\mathscr F^{(p)}(\mathfrak m_v)\longrightarrow \mathscr F(\mathfrak m_v)$ is an isomorphism, and hence the claim follows. 

If $v$ is non-split multiplicative,
then $A/K_v$ is the twist of a split multiplicative elliptic curve $B/K_v$ by the unramified quadratic extension $L_w/K_v$. 
Write $\mathsf Z_v$ for the kernel of the norm map $\Nm_{L_w/K_v}:\O_w^*\longrightarrow \O_v^*$. Then $\mathscr F^{(p)}(\mathfrak m_v)\longrightarrow \mathscr F(\mathfrak m_v)$ is given by the identity map $\mathsf Z_v\longrightarrow \mathsf Z_v$, so it is an isomorphism.

\end{proof}

\subsection{Ordinary places}\label{su:mulp} 

The proof of Lemma \ref{l:ss} shows that if $v$ is a split 
multiplicative place, then $\mathsf V_*$ is given by $K_v^*/\tilde Q^{p\Z}\longrightarrow K_v^*/\tilde Q^\Z$, and hence surjective, so by \eqref{e:jv}, $\ker (j_v)=0$.

\begin{lemma}\label{l:mult} Let $v$ be a multiplicative place. Then $\ker(j_v)=0$, unless $v\in\eth'$,
in which case $\ker(j_v)$ is of order $2=p$, 
consisting of $\xi\in\coh^1(K_v,E_p^{(p)})$ with $K_{v,\xi}/K_v$ unramified.
\end{lemma}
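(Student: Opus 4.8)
The plan is to work place-by-place according to the reduction type. For a split multiplicative place $v$, the remark immediately preceding the lemma already settles it: the proof of Lemma~\ref{l:ss} exhibits $\mathsf V_*$ as the identity-induced surjection $K_v^*/\tilde Q^{p\Z}\longrightarrow K_v^*/\tilde Q^\Z$, so by the long exact sequence \eqref{e:jv} we get $\ker(j_v)=\coker(\mathsf V_*)=0$. It remains to handle non-split multiplicative $v$, and the answer will depend on the parity of the order of the component group. I would first unwind the structure: $A/K_v$ is the twist of a split multiplicative $B/K_v$ by the unramified quadratic $L_w/K_v$, and by the proof of Lemma~\ref{l:ss} the map $\mathscr F^{(p)}(\mathfrak m_v)\longrightarrow \mathscr F(\mathfrak m_v)$ on formal groups is an isomorphism (identity on the norm-one subgroup $\mathsf Z_v$). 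So the cokernel of $\mathsf V_*$ is governed entirely by what happens on the component groups, i.e. on the quotients $A^{(p)}(K_v)/A_o^{(p)}(K_v)\to A(K_v)/A_o(K_v)$.

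The next step is to compute these component groups and the map between them explicitly via Tate parametrization. Over $\bar K_v$, $B\cong \bar K_v^*/Q^\Z$ with $\ord_v(Q)=-v(\Delta)=:n$ (so the component group of $B$ has order $n$, or rather $n$ after passing to the appropriate model), and $A$ is the $L_w$-twist; since $L_w/K_v$ is unramified quadratic, $G_{K_v}$ acts on $\bar K_v^*/Q^\Z$ through the quotient $\Gal(L_w/K_v)$ by the inversion-composed-with-Frobenius action. For $A^{(p)}$ the Tate period is $Q^p$ (equivalently $\tilde Q^p$), so $\ord_v$ of its period is $pn$. The component group of $A$ is cyclic of order $n$ when $n$ is odd and has a known $2$-torsion refinement when $n$ is even (this is exactly the non-split multiplicative Kodaira type computation: $\Phi_v\cong \Z/2\Z\times\Z/(n/2)\Z$ or $\Z/n\Z$ depending on parity — here the relevant fact is whether $\Phi_v$ has even order, which is the condition defining $\eth'$). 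The Verschiebung on component groups is, up to the twist, the reduction-mod-$\ell$-type map $\Z/pn\Z\to \Z/n\Z$ coming from the inclusion $Q^{p\Z}\subset Q^\Z$; I would compute its cokernel, taking Galois-(co)invariants for $\Gal(L_w/K_v)$ into account. The upshot I expect: $\coker(\mathsf V_*)$ is trivial unless $p=2$ and the component group of $A$ has even order, in which case it is $\Z/2\Z$.

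The most delicate point — and the main obstacle — is identifying precisely which degree-$p$ extension $K_{v,\xi}/K_v$ corresponds to the nonzero class in $\ker(j_v)$ when $v\in\eth'$, and showing it is the unramified one. For this I would use the identification \eqref{e:h1hom}: since $p=2$ forces $k=K$ (as noted in the Notation subsection, because the unique nontrivial point of $E_p^{(p)}(\bar K^s)$ is its own Galois conjugate), we have $\coh^1(K_v,E_p^{(p)})=\Hom(K_v^*/(K_v^*)^2,\Z/2\Z)$, and the unramified class is the one cutting out the unramified quadratic extension $L_w/K_v$. I would trace through the connecting map $\partial\colon A(K_v)\to \coh^1(K_v,E_p^{(p)})$ on a point of $A(K_v)$ whose class in the component group is not in the image of the Verschiebung: concretely, lift it to $\bar K_v^*$, and the obstruction to it coming from $A^{(p)}(K_v)$ lands in the Kummer class of an element of $K_v^*$ whose valuation is odd, hence an unramified quadratic class (since the residue characteristic part is a unit). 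That the resulting class is exactly $L_w$ — rather than some other unramified quadratic — follows because $L_w$ is the unique unramified quadratic extension of $K_v$. One should double-check the edge case where the residue field has characteristic $2$ but is small; however $\ord_v$ being the decisive invariant and $L_w/K_v$ being unramified keeps the argument clean. Assembling the split case, the non-split-with-odd-component-group case, and the non-split-with-even-component-group case gives the stated dichotomy.
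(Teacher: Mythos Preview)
Your overall strategy --- computing $\coker(\mathsf V_*)$ directly by the snake lemma applied to the formal-group filtration, then reading off the answer on the special fibre --- is viable but genuinely different from the paper's route. The paper instead uses inflation--restriction: since $A/L_w$ is split multiplicative, $\ker(j_{L_w})=0$, so
\[
\ker(j_v)=\ker\bigl(\coh^1(L_w/K_v,\,E_p^{(p)}(L_w))\longrightarrow \coh^1(L_w/K_v,\,A^{(p)}(L_w))\bigr).
\]
For $p\neq 2$ the source vanishes (cohomology of a $p$-group under a group of order $2$). For $p=2$ the source is $\Hom(G,\Z/2\Z)$ and, using that $A^{(p)}$ is the $L_w/K_v$-twist of $B^{(p)}$, the target is $B^{(p)}(K_v)/\Nm_{L_w/K_v}B^{(p)}(L_w)$; the nontrivial cocycle is sent to the class of the Tate period $Q_v$, which vanishes iff $\ord_v Q_v$ is even. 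A pleasant side effect is that the unramified-ness of the surviving class is automatic, since it is inflated from $\Gal(L_w/K_v)$.

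Two points in your write-up need correction. First, for non-split multiplicative $v$ the \emph{geometric} component group is always cyclic $\Z/n\Z$; Frobenius acts by $-1$, so the $\F_v$-rational part is $(\Z/n\Z)[2]$, not a product $\Z/2\Z\times\Z/(n/2)\Z$. With this in hand your cokernel computation does go through: Verschiebung is the identity on the non-split torus, and on component groups the induced map $(\Z/pn\Z)[2]\to(\Z/n\Z)[2]$ (reduction mod $n$) is an isomorphism for odd $p$ and the zero map for $p=2$. Second, the sentence ``the obstruction \ldots lands in the Kummer class of an element of $K_v^*$ whose valuation is odd, hence an unramified quadratic class'' is not right: in characteristic $2$ there is no Kummer theory for $\Z/2\Z$, and the identification \eqref{e:h1hom} is via local class field theory, where the unramified character is the one \emph{trivial on units}. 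The cleanest fix within your framework is simply to observe that the restriction of $\partial(x)$ to $G_{L_w}$ vanishes (since $\mathsf V_*$ is surjective on $L_w$-points by the split case), so $K_{v,\xi}\subset L_w$ and hence $K_{v,\xi}=L_w$.
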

Note that if $p=2$, then $k=K$, so $K_{v,\xi}$ is defined. 

\begin{proof} 

Suppose $v$ is non-split multiplicative and let $B/K_v$ and $L_w/K_v$ be as in the proof of Lemma \ref{l:ss}.
Because $A/L_w$ is split-multiplicative, we have the injection $\coh^1(L_w,E_p^{(p)})\longrightarrow \coh^1(L_w,A^{(p)})$, and hence
$$\ker(j_v)=\ker (\coh^1(L_w/K_v,E_p^{(p)}(L_w))\longrightarrow \coh^1(L_w/K_v,A^{(p)}(L_w))).$$
For $p\not=2$, we have $\coh^1(L_w/K_v,E_p^{(p)}(L_w))=0$, so $\ker(j_v)=0$.  Denoting 
$G=\Gal(L_w/K_v)$, we have the commutative diagram
$$\xymatrix{\coh^1(L_w/K_v,E_p^{(p)}(L_w)) \ar[r] \ar[d]^-\simeq & \coh^1(L_w/K_v, A^{(p)}(L_w)) \ar[d]^-\simeq \\
\Hom(G,\Z/2\Z) \ar[r] & B^{(p)}(K_v)/\Nm_{L_w/K_v}(B^{(p)}(L_w)).}
$$
The non-trivial element of $\Hom(G,\Z/2\Z)$, sending the generator of $G$ to
the point of $B^{(p)}(K_v)$ obtained by the Tate local period $Q_v$ of $B/K_v$,  corresponds to an element of $\ker(j_v)$ if and only if  
$Q_v\in \Nm_{L_w/K_v}(L_w^*)$, or equivalently $\ord_v Q_v$ is even.

\end{proof}

\begin{lemma}\label{l:good}
Suppose $v$ is a good ordinary place and $w$ is a place of $k$ sitting over $v$. Then $\ker(j_w)$ is of order $p$, consisting of 
$\xi\in\coh^1(k_w,E_p^{(p)})$ with $k_{w,\xi}/k_w$ unramified. If $K_v\not=k_w$, then $\ker(j_v)$ is trivial.
\end{lemma}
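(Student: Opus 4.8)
The plan is to reduce the computation of $\ker(j_w)$ to the analysis of the formal-group power series $\mathscr P$ carried out in Lemma \ref{l:ss}, now specialized to a good ordinary place. First I would use the long exact sequence \eqref{e:jv}, applied over $k_w$ in place of $K_v$: since $A_p^{(p)}(k_w)=E_p^{(p)}(k_w)$ and $k_w$ contains the $p$-torsion of $E_p^{(p)}$, the group $\coh^1(k_w,E_p^{(p)})=\Hom(G_{k_w},\Z/p\Z)$ as in \eqref{e:h1hom}, and $\ker(j_w)=\coker(\mathsf V_*: A^{(p)}(k_w)\to A(k_w))$. Because $v$ is a good place, $A(k_w)=A_o(k_w)$ up to the finite quotient $\bar A(\F_w)$; more precisely one has the exact sequence $0\to \mathscr F(\mathfrak m_w)\to A(k_w)\to \bar A(\F_w)\to 0$ and similarly for $A^{(p)}$, with $\mathsf V$ acting compatibly. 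Since the reduction $\bar A$ is ordinary, $\mathsf V$ on $\bar A(\F_w)$ is the dual of the (bijective on points over the algebraic closure, but here over $\F_w$) Frobenius, hence $\coker$ on the $\bar A(\F_w)$-part is controlled by a snake-lemma argument and is trivial for dimension reasons once we pass to the relevant piece; the substance is the cokernel of $\mathsf V$ on the formal groups.

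Next I would invoke diagram \eqref{e:vf}: $\mathsf V$ on $\mathscr F^{(p)}(\mathfrak m_w)\to\mathscr F(\mathfrak m_w)$ is given by the distinguished polynomial part $P(t)$ of $\mathscr P(t)$, and by Lemma \ref{l:ss}, for a good ordinary $v$ we have $P(t)=t$, so $\mathsf V$ restricted to the formal groups is, up to the unit $u(t)$ and the Frobenius, essentially multiplication by the unit — hence an isomorphism $\mathscr F^{(p)}(\mathfrak m_w)\xrightarrow{\sim}\mathscr F(\mathfrak m_w)$. Wait — more carefully: $\mathscr P = u\cdot P = u\cdot t$, and $\mathscr P$ factors the arithmetic Verschiebung composed with the Frobenius substitution; since $\mathsf F$ is visibly surjective on formal points (it is Frobenius followed by an isomorphism of formal groups when $\bar A$ is ordinary) and $\mathscr P$ has a simple zero at $0$ with $\mathscr P'(0)\ne 0$, $\mathsf V$ on $\mathscr F^{(p)}(\mathfrak m_w)$ is a bijection. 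Therefore $\coker(\mathsf V_*)$ picks up exactly the contribution from the component groups, i.e. from $\bar A^{(p)}(\F_w)\xrightarrow{\mathsf V}\bar A(\F_w)$. Since $\bar A$ is an ordinary elliptic curve over the finite field $\F_w$ and $\mathsf V$ is dual to $\mathsf F$, one computes $\coker(\mathsf V:\bar A^{(p)}(\F_w)\to\bar A(\F_w))\cong \bar A(\F_w)[\mathsf F]$, which is cyclic of order $p$ (the connected part $C_p$ has no $\F_w$-points, while over $\bar\F_w$ the étale quotient $E_p^{(p)}\cong\Z/p\Z$, and $\F_w$ contains $\mu_p$-data making it rational). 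This gives $|\ker(j_w)|=p$.

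To identify $\ker(j_w)$ as the unramified classes, I would argue that the image of $\partial: A(k_w)\to\coh^1(k_w,E_p^{(p)})$ factors through the unramified cohomology: the classes in the image come from the component-group contribution, which is captured by $\coh^1(\F_w,\cdot)=\coh^1(k_w^{ur}/k_w,\cdot)$, i.e. by unramified characters; equivalently, under \eqref{e:h1hom} the relevant $\xi$ corresponds to the unramified quadratic/degree-$p$ extension $k_{w,\xi}/k_w$. A clean way: the local duality pairing identifies $\ker(j_w)=\coker(\mathsf V_*)$ with $\coh^1(k_w,C_p)^\vee$-type object via \eqref{e:fec}, \eqref{e:ve}, and one checks the unramified class is the orthogonal complement of the local norm group. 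Finally, for the last sentence, if $K_v\ne k_w$ then $[k_w:K_v]$ is prime to $p$ (the residue extension defining $k$ has degree dividing $p-1$), so restriction $\coh^1(K_v,E_p^{(p)})\to\coh^1(k_w,E_p^{(p)})$ is injective and $\ker(j_v)$ injects into $\ker(j_w)$; but a class in $\ker(j_v)$ is $\Gal(k_w/K_v)$-invariant, while the order-$p$ group $\ker(j_w)$ carries a nontrivial action of $\Gal(k_w/K_v)$ (it is generated by a class whose associated unramified extension is not defined over $K_v$ unless the $p$-torsion point is $K_v$-rational, which it is not when $k_w\ne K_v$), forcing $\ker(j_v)=0$.

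The main obstacle I expect is the second step: pinning down precisely that $\mathsf V$ is an isomorphism on the formal groups at a good ordinary place and correctly bookkeeping the component-group contribution — in particular verifying that the order-$p$ cokernel is exactly accounted for by $\bar A(\F_w)[\mathsf F]$ and matches an unramified class, rather than an accidental cancellation against the formal part. The non-$p$-rationality argument for the last claim is routine once the Galois action on $\ker(j_w)$ is made explicit.
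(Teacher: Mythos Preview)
Your overall strategy is exactly the paper's: use Lemma~\ref{l:ss} to see that $\mathsf V$ is an isomorphism on formal-group points at a good ordinary place, then compute the cokernel on the reduction $\bar A^{(p)}(\F_w)\to\bar A(\F_w)$ via the snake lemma. The identification of $\ker(j_w)$ with the unramified classes and the treatment of the case $K_v\ne k_w$ are also in the same spirit, though the paper handles the latter by simply rerunning the same computation over $K_v$ and observing $\bar E_p^{(p)}(\F_v)=E_p^{(p)}(K_v)=0$, which is more direct than your Galois-invariants argument.

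There is, however, a genuine error in your residue-field step. You assert
\[
\coker\bigl(\bar{\mathsf V}:\bar A^{(p)}(\F_w)\to\bar A(\F_w)\bigr)\cong \bar A(\F_w)[\mathsf F]
\]
and claim this has order $p$. But $\ker\bar{\mathsf F}=\bar C_p$ is the connected (infinitesimal) kernel, so $\bar A(\F_w)[\mathsf F]=\bar C_p(\F_w)=0$; indeed your own parenthetical remark (``the connected part $C_p$ has no $\F_w$-points'') contradicts the claim. The correct computation, as in the paper, is the following: since $\bar C_p(\F_w)=0$ and $|\bar A(\F_w)|=|\bar A^{(p)}(\F_w)|$, the Frobenius $\bar{\mathsf F}$ is a \emph{bijection} on $\F_w$-points; composing with its inverse identifies $\bar{\mathsf V}$ with multiplication by $p$ on $\bar A(\F_w)$, so
\[
\coker(\bar{\mathsf V})\cong \bar A(\F_w)/p\bar A(\F_w)\cong \bar A_p(\F_w)=\bar E_p^{(p)}(\F_w),
\]
which has order $p$ precisely because $w$ is a place of $k$ and $E_p^{(p)}$ is $k$-rational. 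With this correction your argument goes through; the unramified identification then follows by the citation to \cite[I.3.8]{mil86} together with a count (there are exactly $p$ unramified classes in $\Hom(G_{k_w},\Z/p\Z)$), rather than the duality sketch you propose.
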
 
\begin{proof} In view of the diagram \eqref{e:vf}, Lemma \ref{l:ss} says $\xymatrix{A_o^{(p)}(k_w) \ar[r]_-\sim^-{\mathsf{V}} & A_o(k_w)}$.
We have to determine the cokernel of the induced $\bar{\mathsf{V}}:\bar A^{(p)}(\F_w)\longrightarrow \bar A(\F_w)$.
The Frobenius $\bar{\mathsf{F}}$ identifies $\bar A^{(p)}(\F_w)$ with $\bar A(\F_w)$ and under this, $\bar{\mathsf{V}}$ is identified with 
the multiplication by $p$. The cokernel in question is isomorphic to 
$$\bar A(\F_w)/p\bar A(\F_w)\simeq \bar A_p(\F_w)=\bar E_p^{(p)}(\F_w).$$
The snake lemma applied to the diagram
$$\xymatrix{0\ar[r] & A_o^{(p)}(k_w) \ar[r] \ar[d]^-{\mathsf{V}}_-\simeq & A^{(p)}(k_w) \ar[r] \ar[d]^-{\mathsf{V}} & \bar {A}^{(p)}(\F_w) \ar[r] 
\ar[d]^-{\bar{\mathsf{V}}} & 0\\
0 \ar[r] & A_o(k_w) \ar[r] & A(k_w) \ar[r] & \bar{A}(\F_w) \ar[r] & 0}$$
implies that the reduction map $E_p^{(p)}(k_w)\longrightarrow \bar E_p^{(p)}(\F_w)$ is an isomorphism,
so $\ker (j_w)$ is of order $p$, and by
\cite[\S I.3.8]{mil86}, it is formed by all unramified $\xi$. If $K_v\not=k_w$,
then $\bar E_p^{(p)}(\F_v)=E_p^{(p)}(K_v)=0$, and a similar argument shows $\ker(j_v)$ is trivial.

\end{proof}

\subsection{Supersingular places}\label{su:ss} 

Suppose $v$ is supersingular. Choose a nonzero $t_1\in\mathfrak m_w$ (in some $F_w$) with 
$\iota^{(p)}(t_1)\in E_p^{(p)}(F_w)$. Let $[u]$ denote the multiplication by $u$ on  
$A^{(p)}$. Because
$t_u:={\iota^{(p)}}^{-1}\circ [u]\circ \iota^{(p)} (t_1)=ut_1+\text{higher terms},$ 
if  $p\nmid u$, then $\ord_v t_u=\ord_v t_1$.
Denote
\begin{equation}\label{e:tk}
n_v:=\sum_{i\in\F_p^*} \ord_v t_i=(p-1)\ord_v(t_u), \;\text{for}\; (u,p)=1.
\end{equation} 

Write
$$P(t)=t^p+z_{p-1}t^{p-1}+\cdot+z_1t,$$
with
\begin{equation}\label{e:ord}
\ord_v z_1=n_v.
\end{equation}

For $s,t \in\mathfrak m$ write $s \boxplus t$ for $\mathscr F^{(p)}(s,t)$. Then $\iota^{(p)}(s\boxplus t)=\iota^{(p)}(s)+\iota^{(p)}(t)$.
Diagram \eqref{e:vf} shows that for a given $b\in \mathfrak m_v$,
if $a_0\in \mathfrak m_w$ is a root of $\mathscr P(t)-b=0$, then all other roots equal $a_u:=a_0\boxplus t_u=\mathscr F^{(p)}(a_0,t_u)$, $u=1,...,p-1$.
Let $Q(t)$ be the distinguished polynomial associated to $\mathscr P(t)-b$, whose roots are also $a_0,...,a_{p-1}$. Since $Q(0)=-b\cdot \xi$, for some $\xi\in \O_v^*$, 
\begin{equation}\label{e:ab}
\sum_{u=0}^{p-1}\ord_v  a_u=\ord_v b.
\end{equation}
Since
$\mathscr F^{(p)}(X,0)=\mathscr F^{(p)}(0,X)=X$, we can write
$\mathscr F^{(p)}(X,Y)=X+Y+XY\cdot g(X,Y)$.
It follows that $a_u=a_0+t_u+\text{higher terms}.$ Hence
\begin{equation}\label{e:diff}
\ord_v (a_u-a_{u'})= \frac{n_v}{p-1}.
\end{equation}

\begin{lemma}\label{l:ab} For every $b \in\mathfrak m_v$ with $\ord_v b>\frac{pn_v}{p-1}$, there exists an element $a\in \mathfrak m_v$,
with $\ord_v a>\frac{n_v}{p-1}$, such that $\mathscr P(a)=b$. Conversely, for $a\in \mathfrak m_v$,
with $\ord_v a>\frac{n_v}{p-1}$, the element $b=\mathscr P(a)\in \mathfrak m_v$ has 
$\ord_v b=\ord_v a+n_v>\frac{pn_v}{p-1}$.
\end{lemma}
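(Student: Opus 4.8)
The plan is to analyze the distinguished polynomial $Q(t)$ associated to $\mathscr{P}(t)-b$ via its Newton polygon, using the already-established facts about $P(t)$ and the roots $a_0,\dots,a_{p-1}$. Recall from Lemma \ref{l:ss} that $\mathscr{P}(t)=u(t)P(t)$ with $u$ a unit, and $P(t)=t^p+z_{p-1}t^{p-1}+\cdots+z_1 t$ with $\ord_v z_1=n_v$ and $P$ separable; hence $\mathscr{P}(t)$ and $P(t)$ have the same roots and the same Newton polygon below the constant term, and $\mathscr P(a)=b$ is equivalent to $P(a)=b\cdot w(a)$ for a unit $w$. For the forward direction, I would start from a $b$ with $\ord_v b>\frac{pn_v}{p-1}$ and examine the Newton polygon of $\mathscr{P}(t)-b=u(t)P(t)-b$: its vertices come from the vertices of $P(t)$ (at heights $0,\dots,p$ with $P(0)=0$ so the bottom segment starts at $(1,\ord_v z_1)=(1,n_v)$... — more precisely the lowest-order nonconstant term is $z_1 t$ with valuation $n_v$) together with the new constant term $-b\,\xi$ of valuation $\ord_v b$. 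Since $\ord_v b > \frac{pn_v}{p-1} > n_v + \frac{n_v}{p-1}$, the point $(0,\ord_v b)$ lies strictly above the line through $(1,n_v)$ of slope $-\frac{n_v}{p-1}$ extended to $x=0$, so the Newton polygon of $Q(t)$ has a single segment from $(0,\ord_v b)$ to $(1,n_v)$ of slope $-(\ord_v b-n_v)$, corresponding to exactly one root $a$ with $\ord_v a=\ord_v b-n_v>\frac{pn_v}{p-1}-n_v=\frac{n_v}{p-1}$; together with the $p-1$ roots of the remaining top segment which have valuation $\frac{n_v}{p-1}$ (these are the $a_u-a_0$-type translates, accounting for \eqref{e:ab} and \eqref{e:diff}). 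The point is that $a$ can be chosen in $\mathfrak{m}_v$, not merely in $\mathfrak{m}_w$: since $a$ is the unique root of $\mathscr{P}(t)-b$ of valuation $>\frac{n_v}{p-1}$ and $\mathscr{P}(t)-b$ has coefficients in $\O_v$, this root is fixed by $\Gal(\bar K_v/K_v)$, hence lies in $K_v$, and then in $\mathfrak{m}_v$.

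For the converse, given $a\in\mathfrak{m}_v$ with $\ord_v a>\frac{n_v}{p-1}$, set $b=\mathscr{P}(a)$. Writing $\mathscr{P}(a)=u(a)\bigl(a^p+z_{p-1}a^{p-1}+\cdots+z_1 a\bigr)$, I compare valuations term by term: $\ord_v(z_1 a)=n_v+\ord_v a$, while for $j\ge 2$ one needs $\ord_v(z_j a^j)>n_v+\ord_v a$. This is where the precise information on the Newton polygon of $P$ is used: because $P$ has its roots $0,t_1,\dots,t_{p-1}$ with $\ord_v t_i=\frac{n_v}{p-1}$ for $i\ne 0$, the polygon of $P(t)/t$ is the single segment from $(0,n_v)$ to $(p-1,0)$ of slope $-\frac{n_v}{p-1}$, i.e. $\ord_v z_j\ge n_v-(j-1)\frac{n_v}{p-1}$ for $1\le j\le p$ (equality at the endpoints). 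Hence $\ord_v(z_j a^j)-\ord_v(z_1 a)\ge (j-1)\bigl(\ord_v a-\frac{n_v}{p-1}\bigr)>0$ for $j\ge 2$. So the $z_1 a$ term strictly dominates, giving $\ord_v b=\ord_v z_1+\ord_v a=n_v+\ord_v a$, and since $\ord_v a>\frac{n_v}{p-1}$ this is $>n_v+\frac{n_v}{p-1}=\frac{pn_v}{p-1}$, as claimed.

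The main obstacle I anticipate is bookkeeping the Newton polygon of $\mathscr{P}(t)-b$ carefully enough to be sure that exactly one root has valuation $>\frac{n_v}{p-1}$ and that it equals $\ord_v b-n_v$ (rather than just $\ge$); this requires knowing that $(1,n_v)$ is genuinely a vertex, which follows from \eqref{e:ord} together with the slope computation from \eqref{e:diff}, and that the constant term $(0,\ord_v b)$ sits strictly above the extension of the lower segment, which is exactly the hypothesis $\ord_v b>\frac{pn_v}{p-1}$ after noting $\frac{pn_v}{p-1}=n_v+\frac{n_v}{p-1}$. The descent of the root from $\mathfrak m_w$ to $\mathfrak m_v$ via Galois-invariance is the other point to state explicitly but should be routine. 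Everything else is a direct comparison of valuations using the structure of $P(t)$ from Lemma \ref{l:ss}.
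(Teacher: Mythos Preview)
Your proof is correct. The paper's own proof is considerably shorter because it invokes the two displayed identities already established just above the lemma: \eqref{e:ab}, which says $\sum_u \ord_v a_u = \ord_v b$, and \eqref{e:diff}, which says $\ord_v(a_u - a_{u'}) = \frac{n_v}{p-1}$. From \eqref{e:diff} one sees at once that at most one root can have valuation $> \frac{n_v}{p-1}$, and if one does, the remaining $p-1$ have valuation exactly $\frac{n_v}{p-1}$; then \eqref{e:ab} gives both directions of the lemma in one line each. Your Newton-polygon argument recovers the same information---the bound $\ord_v z_j \ge n_v - (j-1)\frac{n_v}{p-1}$ is precisely what the root valuations $\frac{n_v}{p-1}$ of $P(t)/t$ encode, and your vertex analysis of $Q$ reproduces the root-valuation dichotomy---but does so from scratch rather than by citing \eqref{e:ab} and \eqref{e:diff}. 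The Galois-invariance step placing the distinguished root in $\mathfrak m_v$ is identical in both arguments. Your route buys self-containment and makes the exact equality $\ord_v a = \ord_v b - n_v$ visibly fall out of the polygon; the paper's buys brevity.
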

\begin{proof} 
If $b \in\mathfrak m_v$ and $a_0$ is a solution to $\mathscr P(t)=b$, with $\ord_v(a_0)>\frac{n_v}{p-1}$, then by \eqref{e:diff}, other solutions $a_u$ have $\ord_v a_u=\frac{n_v}{p-1}$. Therefore, if $a\in \mathfrak m_v$,
with $\ord_v a>\frac{n_v}{p-1}$, and $b=\mathscr P(a)$, then by \eqref{e:ab}, $\ord_v b=\ord_v a+n_v$. Conversely, if 
$b \in\mathfrak m_v$, $\ord_v b>\frac{pn_v}{p-1}$, by \eqref{e:ab}, there is a solution $a$ to
$\mathscr P(t)=b$, such that $\ord_v a>\frac{n_v}{p-1}$. Comparing the valuations, we deduce that $a$ is the only Galois conjugate of itself, whence $a\in\mathfrak m_v$.

\end{proof}

\begin{lemma}\label{l:image} If $v$ is a supersingular place of $A/K$, then the cokernel of 
$$\xymatrix{A^{(p)}(K_v) \ar[r]^-{\mathsf V_{*} } & A(K_v)}$$ 
is of order $p^{\epsilon_v}\cdot q_v^{n_v}$, where if $k_v=K_v$, $\epsilon_v=1$; otherwise, $\epsilon=0$.
\end{lemma}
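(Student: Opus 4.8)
The plan is to compute the cokernel of $\mathsf V_*$ via the exact sequence of formal/arithmetic groups, exactly as in Lemma \ref{l:good}, but now with the nontrivial index $n_v$ coming out of Lemma \ref{l:ab}. First I would record the diagram with exact rows
$$\xymatrix{0\ar[r] & A_o^{(p)}(K_v) \ar[r] \ar[d]^-{\mathsf{V}} & A^{(p)}(K_v) \ar[r] \ar[d]^-{\mathsf{V}} & \bar A^{(p)}(\F_v) \ar[r] \ar[d]^-{\bar{\mathsf V}} & 0\\
0 \ar[r] & A_o(K_v) \ar[r] & A(K_v) \ar[r] & \bar A(\F_v) \ar[r] & 0,}$$
and apply the snake lemma. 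The snake sequence splits the computation into a ``formal'' contribution $\coker(\mathsf V\colon A_o^{(p)}(K_v)\to A_o(K_v))$, possibly corrected by $\ker\bar{\mathsf V}$ and a ``reduction'' contribution $\coker\bar{\mathsf V}$. Since $v$ is supersingular, $\bar A(\F_v)$ has no $p$-torsion, so $\bar{\mathsf V}$ (equivalently multiplication by $p$ after identifying $\bar A^{(p)}$ with $\bar A$ via $\bar{\mathsf F}$) is injective with trivial cokernel on the prime-to-$p$-finite groups $\bar A^{(p)}(\F_v)\to\bar A(\F_v)$. Hence the snake sequence degenerates and gives $\coker(\mathsf V_*)\cong\coker(\mathsf V\colon A_o^{(p)}(K_v)\to A_o(K_v))$, so the whole order is the formal-group index.

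Next I would translate the formal-group map through the bijections $\iota,\iota^{(p)}$ into the power series $\mathscr P$: by \eqref{e:vf}, $\coker(\mathsf V\colon A_o^{(p)}(K_v)\to A_o(K_v))\cong \mathfrak m_v/\mathscr P(\mathfrak m_v)$ (where $\mathscr P(\mathfrak m_v)$ means the image of the $\boxplus$-group $\mathscr F^{(p)}(\mathfrak m_v)$ under $\mathscr P$, a subgroup of $\mathscr F(\mathfrak m_v)$). I would then filter $\mathfrak m_v$ by powers of $\pi_v$ and use Lemma \ref{l:ab}: the ``conversely'' part shows $\mathscr P$ maps the piece $\ord_v>\tfrac{n_v}{p-1}$ into the piece $\ord_v>\tfrac{pn_v}{p-1}$ with image exactly that piece (by the first part of Lemma \ref{l:ab}), and since $\mathscr P'(0)\neq 0$ this map on deep enough pieces is bijective (it shifts valuation by exactly $n_v$, so the ``quotient of graded pieces'' count gives index $q_v^{n_v}$ from the $n_v$ layers $\tfrac{n_v}{p-1}<\ord_v\le \tfrac{pn_v}{p-1}$... more precisely the index of $\{\ord_v>\tfrac{pn_v}{p-1}\}$-image inside $\{\ord_v>0\}$ is $q_v^{\,pn_v/(p-1)}$ while the domain contributes $q_v^{\,n_v/(p-1)}$, leaving $q_v^{n_v}$). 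This yields $|\coker(\mathsf V_*)|=q_v^{n_v}$ when $k_v\neq K_v$; when $k_v=K_v$ there is the extra factor: the point $\iota^{(p)}(t_1)\in E_p^{(p)}(K_v)$ is rational, contributing an extra $\Z/p\Z$ to $\ker\bar{\mathsf V}$ (or equivalently to the formal-side cokernel), giving the factor $p^{\epsilon_v}$. I would make the $k_v=K_v$ versus $k_v\neq K_v$ dichotomy precise by tracking whether $E_p^{(p)}(\bar K_v)\cap A_o^{(p)}(\bar K_v)$ meets $A^{(p)}(K_v)$, exactly as the parenthetical $\epsilon_v$ in the statement prescribes.

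The main obstacle I anticipate is the bookkeeping that converts the valuation-shift statements of Lemma \ref{l:ab} into the exact index $q_v^{n_v}$, rather than merely a bounded discrepancy: one must check that on the intermediate band $\tfrac{n_v}{p-1}<\ord_v a\le\tfrac{pn_v}{p-1}$ the map $\mathscr P$ is genuinely injective (no collisions among the non-conjugate roots), which is where \eqref{e:diff} and the fact that the other roots $a_u$ have valuation exactly $\tfrac{n_v}{p-1}$ are used, together with the separability of $P$ from Lemma \ref{l:ss}. The second delicate point is justifying that the $\epsilon_v$ correction is exactly $0$ or $1$ and not something larger: this comes down to $E_p^{(p)}(\bar K_v)\cap A_o^{(p)}(\bar K_v)\simeq\Z/p\Z$ (Lemma \ref{l:ss}) being cyclic of order $p$, so at most one factor of $p$ can be gained, and it is gained precisely when that cyclic group is $G_{K_v}$-fixed, i.e.\ when $k_v=K_v$. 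Everything else is a routine diagram chase plus the $\pi_v$-adic filtration argument.
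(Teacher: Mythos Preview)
Your approach is essentially the paper's: reduce to the formal-group cokernel (since $\bar A(\F_v)$ has prime-to-$p$ order, so $\bar{\mathsf V}$ is bijective) and then use Lemma~\ref{l:ab} to control $\mathscr P$. Two remarks. First, the extra factor $p^{\epsilon_v}$ does \emph{not} come from $\ker\bar{\mathsf V}$---that kernel is trivial, as you yourself noted---but from the $K_v$-rational kernel of $\mathscr P$ on $\mathscr F^{(p)}(\mathfrak m_v)$, namely $E_p^{(p)}(K_v)\cap A_o^{(p)}(K_v)$; your parenthetical ``or equivalently to the formal-side cokernel'' is the correct location. Second, the ``main obstacle'' you anticipate dissolves if you argue as the paper does: set $\beta=\lfloor n_v/(p-1)\rfloor+1$, observe via Lemma~\ref{l:ab} that $\mathscr P$ carries $\mathscr F^{(p)}(\mathfrak m_v^\beta)$ \emph{onto} $\mathscr F(\mathfrak m_v^{\beta+n_v})$, and then simply count on the induced map of finite groups
\[
\bar{\mathscr P}\colon \mathscr F^{(p)}(\mathfrak m_v)/\mathscr F^{(p)}(\mathfrak m_v^\beta)\longrightarrow \mathscr F(\mathfrak m_v)/\mathscr F(\mathfrak m_v^{\beta+n_v}),
\]
using $|\ker\bar{\mathscr P}|=p^{\epsilon_v}$ and $|\text{target}|/|\text{source}|=q_v^{\beta+n_v-1}/q_v^{\beta-1}=q_v^{n_v}$. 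No injectivity analysis on any ``intermediate band'' is needed.
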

\begin{proof}
Since $\bar{A}(\F_v)$ has order prime to $p$, by Diagram \eqref{e:vf} we need to show the cokernel
of $\xymatrix{\mathscr{F}^{(p)}(\mathfrak m_v) \ar[r]^-{\mathscr P} & \mathscr{F}(\mathfrak m_v)}$ has the desired order.

Let $\beta=[\frac{1}{p-1}n_v]+1$. Lemma \ref{l:ab} implies that $\mathscr P$ sends $\mathscr{F}^{(p)}(\mathfrak m^{\beta})$
onto $\mathscr{F}(\mathfrak m^{\beta+n_v})$. Therefore, it is sufficient to check the co-kernel of the induced homomorphism
$$\bar{\mathscr{P}}:\mathscr{F}^{(p)}(\mathfrak m)/ \mathscr{F}^{(p)}(\mathfrak m^{\beta})
\longrightarrow \mathscr{F}(\mathfrak m)/ \mathscr{F}(\mathfrak m^{\beta+n_v}).$$
Since the kernel of $\bar{\mathscr{P}}$ is of order $p^{\epsilon_v}$, the proof is completed by counting.
\end{proof}

Lemma \ref{l:image} says
\begin{equation}\label{e:kerj}
|\ker(j_v)|=p^{\epsilon_v}\cdot q_v^{n_v}.
\end{equation}

\begin{lemma}\label{l:sscond} Let $v$ be a place of $K$ and $w$ a place of $k$ sitting over $v$. The group $\ker(j_w)$ consists of all
$\xi$ with $k_{w,\xi}/k_w$ having conductor at most $ \frac{pn_w}{p-1}$.
\end{lemma}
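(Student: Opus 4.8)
The plan is to use the long exact sequence \eqref{e:jv} to identify $\ker(j_w)$ with $\coker\bigl(\mathsf V_*\colon A^{(p)}(k_w)\to A(k_w)\bigr)=\image(\partial)$, to describe explicitly the cyclic extensions $k_{w,\xi}/k_w$ attached to the classes $\xi$ in this image, to bound their conductors by $\frac{pn_w}{p-1}$, and then to compare orders: I will show that $\ker(j_w)$ and the subgroup of $\coh^1(k_w,E_p^{(p)})$ consisting of all $\xi$ with $k_{w,\xi}/k_w$ of conductor at most $\frac{pn_w}{p-1}$ have the same cardinality, so that the inclusion just obtained is in fact an equality.

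First I would record two preliminary facts. (i) Since $k=K(E_p^{(p)}(\bar K^s))$, the group $E_p^{(p)}$ is constant over $k_w$, so by \eqref{e:h1hom} we have $\coh^1(k_w,E_p^{(p)})=\Hom(G_{k_w},\Z/p\Z)$, and by local class field theory together with the Hasse--Arf theorem, for an integer $c\ge 0$ the classes $\xi$ with $k_{w,\xi}/k_w$ of conductor at most $c$ form the subgroup $\Hom\bigl(k_w^*/U_w^{(c)},\Z/p\Z\bigr)$, where $U_w^{(0)}=\O_w^*$ and $U_w^{(m)}=1+\mathfrak m_w^m$ for $m\ge 1$. (ii) If $v$ is supersingular, then $E_p^{(p)}(\bar K_v)$ and $E_p^{(p)}(\bar K_v)\cap A_o^{(p)}(\bar K_v)$ both have order $p$ (proof of Lemma \ref{l:ss}), so $E_p^{(p)}\subseteq A_o^{(p)}$ over $\bar K_v$; being moreover rational over $k_w$, the point $\iota^{(p)}(t_1)$ lies in $A_o^{(p)}(k_w)$, hence $t_1\in\mathfrak m_w$, $\frac{n_w}{p-1}=\ord_w t_1\in\Z$, and $c:=\frac{pn_w}{p-1}=p\,\ord_w t_1$ is an integer. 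Using $(1+a)^p=1+a^p$ in characteristic $p$ one then computes
$$\bigl|\Hom\bigl(k_w^*/U_w^{(c)},\Z/p\Z\bigr)\bigr|=p\cdot q_w^{\,c-1-\lfloor(c-1)/p\rfloor}=p\cdot q_w^{\,n_w},$$
the last equality being the elementary identity $c-1-\lfloor(c-1)/p\rfloor=(p-1)\ord_w t_1=n_w$, valid when $c=p\,\ord_w t_1$. When $w$ lies over a good ordinary or a multiplicative place, $n_w=0$ and $c=0$, and this subgroup is just the group of unramified classes, of order $p$.

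Next, the inclusion $\ker(j_w)\subseteq\{\xi:\ k_{w,\xi}/k_w\ \text{has conductor}\le c\}$ at a supersingular $v$: by the reduction made in the proof of Lemma \ref{l:image} (using that $\bar A(\F_w)$ has order prime to $p$), every element of $\ker(j_w)=\image(\partial)$ is $\partial(\iota(b))$ for some $b\in\mathscr F(\mathfrak m_w)$, and by Diagram \eqref{e:vf} the extension it cuts out is $k_w(a_0)$ with $\mathscr P(a_0)=b$. If $a_0\in k_w$ the extension is trivial; otherwise it is totally ramified of degree $p$ with minimal polynomial the distinguished polynomial $Q(t)$ of $\mathscr P(t)-b$, so its different divides $\bigl(Q'(a_0)\bigr)=\bigl(\prod_{u\neq 0}(a_0-a_u)\bigr)$, which by \eqref{e:diff} has valuation $p\cdot(p-1)\cdot\frac{n_w}{p-1}=p\,n_w$ in $k_w(a_0)$; hence $\Disc(k_w(a_0)/k_w)$ has valuation at most $p\,n_w$ over $k_w$, and the conductor--discriminant relation for a $\Z/p\Z$-extension ($v_{k_w}(\Disc)=(p-1)\times\text{conductor}$) gives conductor $\le\frac{pn_w}{p-1}=c$. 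For $w$ over a good ordinary place this is Lemma \ref{l:good} (the classes in $\ker(j_w)$ are unramified, of conductor $0=c$), and over a multiplicative place it is Lemma \ref{l:mult} read over $k_w$, again with $c=0$.

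Finally, the reverse inclusion follows by counting: Lemma \ref{l:image}, applied over $k_w$ — the exponent $\epsilon$ there being $1$ because $E_p^{(p)}$ is rational over $k_w$ — yields $|\ker(j_w)|=p\cdot q_w^{\,n_w}$ at a supersingular $v$, which is exactly the order computed in (i)--(ii); combined with the inclusion of the previous paragraph, the two subgroups of $\coh^1(k_w,E_p^{(p)})$ coincide. (Over good ordinary and multiplicative places both subgroups have order $p$, respectively are handled by Lemmas \ref{l:good} and \ref{l:mult}.) \textbf{The main obstacle} I anticipate is the cardinality computation in (ii): evaluating $\bigl|\Hom\bigl(k_w^*/U_w^{(c)},\Z/p\Z\bigr)\bigr|$ in the wildly ramified characteristic-$p$ local field $k_w$ and verifying $c-1-\lfloor(c-1)/p\rfloor=n_w$ for $c=\frac{pn_w}{p-1}$, since this is precisely what makes the conductor bound $\frac{pn_w}{p-1}$ sharp and forces the two subgroups to be equal rather than merely nested. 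A secondary point is to confirm that the formal-group reduction captures all of $\image(\partial)$, which relies on the supersingularity of $\bar A$ (so that $\bar{\mathsf V}$ is injective on $\F_w$-points).
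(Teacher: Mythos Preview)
Your proposal is correct and follows essentially the same route as the paper: establish the inclusion $\ker(j_w)\subseteq\{\xi:\text{conductor of }k_{w,\xi}/k_w\le \frac{pn_w}{p-1}\}$ by bounding the discriminant via \eqref{e:diff}, then conclude equality by comparing cardinalities, using $|\ker(j_w)|=p\cdot q_w^{n_w}$ from Lemma~\ref{l:image} over $k_w$ (equivalently \eqref{e:kerj}) on one side and a direct local-class-field count on the other. The paper organizes the count as (unramified classes of order $p$) $\times$ $|D_m/D_m^p|$ with $m=\frac{pn_w}{p-1}$, while you compute $|\Hom(k_w^*/U_w^{(c)},\Z/p\Z)|$ in one stroke; these are the same computation, and your identity $c-1-\lfloor(c-1)/p\rfloor=n_w$ for $c=p\,\ord_w t_1$ is exactly the paper's $|D_m/D_m^p|=q_w^{n_w}$.

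One minor remark: the lemma sits in \S\ref{su:ss} under the standing hypothesis that $v$ is supersingular, so your added discussion of good ordinary and multiplicative places is extraneous here (and the parenthetical ``both subgroups have order $p$'' is not quite right at multiplicative $w$, where Lemma~\ref{l:mult} shows $\ker(j_w)$ can be trivial). This does not affect your argument for the supersingular case, which is the content of the lemma.
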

\begin{proof}
An element $\xi\in \ker(j_w)$ can be written as $\partial x$ for some $x\in A(k_w)$. Since $\bar A(\F_w)$ has order prime to $p$, we may choose $x=\iota(b)\in A_o(k_w)$, for some $b\in\mathfrak m_w$.  
Let $a_0,...,a_{p-1}$ be solutions to $\mathscr P(t)=b$.  
Then all $a_u$ are integral over $\O_w$ and
$k_{w,\xi}=k_w(a_0)$. It follows from \eqref{e:diff} that if $\mathrm{Disc}$ is the discriminant of $k_{w,\xi}/k_w$, then
$$\ord_w (\mathrm{Disc})\leq p\cdot n_w.
$$
This implies the conductor of $k_{w,\xi}/k_w$ is at most $\frac{pn_w}{p-1}$. Classes $\xi\in \coh^1(k_w,E_p^{(p)})$ with $k_{w,\xi}/k_w$ unramified are in $\ker(j_w)$ (see \cite[I.3.8]{mil86}). They form a subgroup of order $p$. By the local class field theory,  ramified cyclic extensions of $k_w$
of degree $p$ and conductor at most $m$ are characterized by the group $D_m/D_m^p$, $D_m:=\O_w^*/{1+\pi_w^m\O_w}$. 
In our case $m=\frac{pn_w}{p-1}$ is an integer divisible by $p$ (by \eqref{e:tk}, because each $t_u\in k_w$). 
Since $\O_w=\F_w[[\pi_w]]$, the map
$$D_{\frac{m}{p}}\longrightarrow D_m^p,\;\; x\mapsto x^p,$$
is an bijection. Hence $|D_m/D_m^p|=q_w^{m-1}\cdot (q_w-1)/q_w^{\frac{m}{p}-1}\cdot (q_w-1)=q_w^{n_w}$. In view of \eqref{e:kerj}, the proof is completed by counting.

\end{proof}
The lemma actually says that by \eqref{e:h1hom},
\begin{equation}\label{e:sscond}
\ker(j_w)=\Hom(k_w^*/(1+\pi_w^{\frac{pn_w}{p-1}}\O_w)\cdot (k_w^*)^p, \Z/p\Z).
\end{equation}

\section{Global fields}\label{s:global} Let $\mathcal X/\F_q$ be the complete smooth curve having $K$ as its function field.
Let $\mathcal X_g$, $\X_{go}$ denote the open sets consisting of places where $A$ has good reduction, good ordinary reduction respectively.

\subsection{Poitou-Tate duality}\label{su:ff} 

We first recall the following.

\begin{lemma}\label{l:isogen} Let $\mathrm{f}:B{/K}\longrightarrow B'{/K}$ be a given isogeny of elliptic curves having good reductions at all $v\in \mathcal X_g$ and let
 ${\bf{f}}:\mathcal{B}\longrightarrow \mathcal{B}'$
be the homomorphism extending $\mathrm{f}$ to N$\acute{\text{e}}$ron models over 
$\mathcal{X}_g$. 
Then $\mathcal{N}:=\ker \left[{\bf{f}}\right]$ is a finite flat group scheme 
over $\mathcal{X}_g$.
Furthermore, if $\hat{\mathcal{N}}$ denotes the kernel of the homomorphism $\hat{\bf{f}}: \mathcal{B}'\longrightarrow \mathcal{B}$
extending the dual isogeny $\hat{\mathrm{f}}:B' \longrightarrow B$, then $\mathcal{N}$ 
and $\hat{\mathcal{N}}$ are Cartier dual to each other.
\end{lemma}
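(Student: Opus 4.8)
The plan is to work locally on the base curve $\mathcal{X}_g$ and reduce everything to a statement about finite flat group schemes over a Dedekind scheme. First I would observe that since $B$ and $B'$ have good reduction at every $v \in \mathcal{X}_g$, the N\'eron models $\mathcal{B}$ and $\mathcal{B}'$ are abelian schemes over $\mathcal{X}_g$; the homomorphism $\mathbf{f}$ is the unique extension of $\mathrm{f}$ by the N\'eron mapping property, and it is an isogeny in the sense that it is faithfully flat with finite kernel. To see that $\mathcal{N} = \ker[\mathbf{f}]$ is finite flat over $\mathcal{X}_g$, I would argue fibrewise: $\mathbf{f}$ is surjective with finite fibres on each geometric fibre (because it extends an isogeny and the fibres are abelian varieties), so $\mathcal{N}$ is quasi-finite over $\mathcal{X}_g$; it is proper over $\mathcal{X}_g$ since it is a closed subscheme of the abelian scheme $\mathcal{B}$, hence finite; and flatness over the regular one-dimensional base $\mathcal{X}_g$ follows because $\mathcal{N}$ is finite with fibres of constant rank $\deg \mathrm{f}$ (the rank is locally constant, hence constant on the connected curve, so $\mathbf{f}_* \mathcal{O}_{\mathcal{B}}$ restricted appropriately is locally free — alternatively invoke \cite{lsc10} or the standard fact that a finite morphism to a Dedekind scheme with fibres of constant length is flat).

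Next, for the Cartier duality statement, I would use the standard duality theory of abelian schemes. Let $\hat{\mathcal{B}}$, $\hat{\mathcal{B}}'$ denote the dual abelian schemes over $\mathcal{X}_g$; since elliptic curves are canonically self-dual via the principal polarization, one may identify $\hat{\mathcal{B}} \cong \mathcal{B}$ and $\hat{\mathcal{B}}' \cong \mathcal{B}'$, and under these identifications the dual homomorphism of $\mathbf{f}: \mathcal{B} \to \mathcal{B}'$ is exactly $\hat{\mathbf{f}}: \mathcal{B}' \to \mathcal{B}$ (extending $\hat{\mathrm{f}}$, which is again well-defined because $\hat{\mathbf{f}}$ is the unique extension by the N\'eron property). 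The key input is then the general fact that for an isogeny $\mathbf{f}$ of abelian schemes, there is a canonical perfect pairing $\ker[\mathbf{f}] \times \ker[\hat{\mathbf{f}}] \to \mathbb{G}_m$, i.e. $\ker[\hat{\mathbf{f}}]$ is the Cartier dual of $\ker[\mathbf{f}]$; this is in Oort–Tate / Mumford / SGA and also appears in the cited literature. Applying it here gives that $\mathcal{N} = \ker[\mathbf{f}]$ and $\hat{\mathcal{N}} = \ker[\hat{\mathbf{f}}]$ are Cartier dual.

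The main obstacle I anticipate is not any single deep step but rather being careful about the distinction between the N\'eron model over the bad-reduction locus versus over $\mathcal{X}_g$: the whole point of restricting to $\mathcal{X}_g$ is that there $\mathcal{B}, \mathcal{B}'$ are genuine abelian schemes (proper, smooth), so that the classical duality formalism applies verbatim; away from $\mathcal{X}_g$ the N\'eron models need not be proper and the kernel need not be flat. A secondary technical point is checking that the canonical self-duality isomorphisms for $B$ and $B'$ extend to isomorphisms of abelian schemes $\mathcal{B} \cong \hat{\mathcal{B}}$ and $\mathcal{B}' \cong \hat{\mathcal{B}}'$ over $\mathcal{X}_g$, and that they are compatible with $\mathbf{f}$ and $\hat{\mathbf{f}}$ in the sense that the square relating $\mathbf{f}$ to its abelian-scheme dual commutes; this compatibility is exactly what lets us identify $\hat{\mathcal{N}}$ with the Cartier dual of $\mathcal{N}$, and it follows from the corresponding (classical) compatibility over the generic point $\spec K$ together with the separatedness of the relevant $\Hom$-schemes, so that an identity of homomorphisms over $K$ propagates over all of $\mathcal{X}_g$.
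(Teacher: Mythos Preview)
Your proposal is correct and for the finite-flatness part essentially matches the paper: both argue that $\mathcal{N}$ is proper (as a closed subscheme of an abelian scheme) and quasi-finite, hence finite, and then flat over the regular one-dimensional base.

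For the Cartier duality, your route diverges from the paper's. You invoke as a black box the standard theorem that for an isogeny $\mathbf{f}$ of abelian schemes the kernels of $\mathbf{f}$ and its dual are Cartier dual, together with the extension of the principal polarizations to the N\'eron models. The paper instead derives this directly: it applies $\scHom_{\mathcal{X}_g}(-,\mathbb{G}_m)$ to the short exact sequence $0\to\mathcal{N}\to\mathcal{B}\to\mathcal{B}'\to 0$, uses the biduality isomorphism $\mathcal{B}\simeq\scExt^1_{\mathcal{X}_g}(\mathcal{B},\mathbb{G}_m)$ from \cite{gth72,bbm82,mil86}, checks that $\scHom_{\mathcal{X}_g}(\mathcal{B},\mathbb{G}_m)=0$ fibrewise, and identifies the induced map $\mathbf{f}^*$ on $\scExt^1$ with $\hat{\mathbf{f}}$ by comparison on the generic fibre. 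Your approach is cleaner if one is willing to quote the general result; the paper's approach is more self-contained and makes explicit exactly where self-duality and the comparison over $\spec K$ enter (the very compatibility square you flagged as a secondary technical point is the one the paper writes down and justifies).
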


Note that the existence and the uniqueness of $\bf f$ and $\hat{\bf f}$ are due to the N$\acute{\text{e}}$ron mapping property, see 
\cite[\S 1.2]{blr90}.

\begin{proof}
Since $\mathcal{B}{/\mathcal{X}_g}$ is an abelian scheme, the morphism 
${\bf{f}}$ is proper. It follows that
$\mathcal{N}{/\mathcal{X}_g}$ is proper and quasi-finite, whence 
finite \cite[8.12.4]{grd67} and flat \cite[III.C.8]{mil86}.
The exact sequence
$$\xymatrix{0\ar[r] & \mathcal{N}\ar[r] & \mathcal{B}
\ar[r]^{\bf{f}} & \mathcal{B}' \ar[r] & 0}$$
together with the isomorphism (see \cite[VIII.7.1]{gth72}, \cite{bbm82} or \cite[III.C.14]{mil86})
$$\mathcal{B}{/\mathcal{X}_g} \simeq 
{\scExt}\;{}^1_{\mathcal{X}_g}( \mathcal{B}, \mathbb{G}_m)$$
induce the exact sequence
$$\xymatrix{{\cdots}\ar[r] & {\scHom}_{_{\mathcal{X}_g}}( \mathcal{B}, \mathbb{G}_m)\ar[r]  & 
{\scHom}_{_{\mathcal{X}_g}}(\mathcal{N}, \mathbb{G}_m) \ar[r]
&  \mathcal{B}' \ar[r]^{\hat{\bf{f}}} 
& \mathcal{B}\ar[r] &{\cdots} . }
$$
Here we apply the commutative diagram
$$\xymatrix{{\scExt}\;{}^1_{\mathcal{X}_g}(\mathcal{B}', \mathbb{G}_m)
 \ar[r]^{{\bf{f}}^*} \ar@{=}[d] & {\scExt}\;{}^1_{\mathcal{X}_g}(\mathcal{B}, \mathbb{G}_m)
  \ar@{=}[d]\\
\mathcal{B}' \ar[r]^{\hat{\bf{f}}} & \mathcal{B}}$$
that extends the already known diagram on the generic fibre.
Then we check the equality ${\scHom}_{_{\mathcal{X}_g}}(\mathcal{B}, \mathbb{G}_m)=0$
fibre-wise by using the fact that over a field every homomorphism from an abelian variety to $\mathbb{G}_m$
is trivial. 

\end{proof}

Let notation be as in Lemma \ref{l:isogen} and let $N$ denote the generic fibre of $\mathcal N$.
For $v\in\mathcal X_g$,
we have (see \cite[\S III.7]{mil86})
\begin{equation}\label{e:h1w}
\xymatrix{\coh^1(\O_v,\mathcal{N})\ar@{^{(}->}[r]  & \coh^1(K_v,N)}.
\end{equation}

\begin{lemma}\label{l:dker} Let notation be as above.
For $v\in\mathcal X_g$, we have
$$\coh^1(\O_v,\mathcal{N})=\ker(\coh^1 (K_v,\mathrm{N}) \longrightarrow  \coh^1(K_v,B)).
$$

\end{lemma}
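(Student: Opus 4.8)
The plan is to identify $\coh^1(\O_v,\mathcal N)$ as the image of the Néron-model points $\mathcal B'(\O_v)$ under the connecting map, and then to recognize that this image is exactly the kernel of restriction to $B$. First I would write down the long exact sequence of flat cohomology attached to $0\to\mathcal N\to\mathcal B\xrightarrow{\mathbf f}\mathcal B'\to 0$ over $\spec\O_v$, namely
\begin{equation*}
\mathcal B(\O_v)\lr \mathcal B'(\O_v)\xrightarrow{\ \partial_v\ }\coh^1(\O_v,\mathcal N)\lr \coh^1(\O_v,\mathcal B),
\end{equation*}
and compare it via the restriction maps (and the inclusion \eqref{e:h1w}) to the corresponding sequence over $K_v$:
\begin{equation*}
B(K_v)\lr B'(K_v)\xrightarrow{\ \partial\ }\coh^1(K_v,N)\lr \coh^1(K_v,B).
\end{equation*}
By the Néron mapping property, $\mathcal B(\O_v)=B(K_v)$ and $\mathcal B'(\O_v)=B'(K_v)$, so the two sequences share their first two terms and the connecting maps agree. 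Hence $\coh^1(\O_v,\mathcal N)$, viewed inside $\coh^1(K_v,N)$, contains the image of $\partial$, which is precisely $\ker(\coh^1(K_v,N)\to\coh^1(K_v,B))$.

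For the reverse inclusion I need to know that an unramified class which dies in $\coh^1(K_v,B)$ already dies in $\coh^1(O_v,\mathcal B)$, equivalently that the map $\coh^1(\O_v,\mathcal B)\to\coh^1(K_v,B)$ is injective; then a diagram chase forces any $\xi\in\coh^1(\O_v,\mathcal N)\cap\ker(\coh^1(K_v,N)\to\coh^1(K_v,B))$ to lie in $\image(\partial_v)$, and we are done. The injectivity of $\coh^1(\O_v,\mathcal B)\hookrightarrow\coh^1(K_v,B)$ is the standard fact (Milne, \cite[\S III.7]{mil86}, or Mazur's lemma) that $\coh^1$ of an abelian scheme over a Henselian DVR injects into $\coh^1$ of the generic fibre, because $H^1$ of the special fibre of an abelian variety over a finite field is trivial and the formal/étale filtration is exhausted. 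I would also invoke \eqref{e:h1w} to make sense of all of this as an honest chain of subgroups of $\coh^1(K_v,N)$.

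The main obstacle is bookkeeping rather than conceptual: one must be careful that flat cohomology is the right theory (since $\mathcal N$ may be non-étale at places dividing the degree of $\mathrm f$), that the comparison of the two long exact sequences is functorial with the restriction maps, and that the identifications $\mathcal B(\O_v)=B(K_v)$, $\mathcal B'(\O_v)=B'(K_v)$ are applied correctly — these hold because $\mathcal B,\mathcal B'$ are Néron models. Once injectivity of $\coh^1(\O_v,\mathcal B)\to\coh^1(K_v,B)$ is in hand, the five-lemma-style chase closes both inclusions simultaneously. This is essentially the local content of the global duality arguments of \cite[\S III.7]{mil86}, specialized to our situation.
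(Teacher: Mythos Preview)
Your approach is the same as the paper's --- compare the two long exact sequences via the N\'eron property and the inclusion \eqref{e:h1w} --- but there is a slip in the ``reverse inclusion''. You invoke only the \emph{injectivity} of $\coh^1(\O_v,\mathcal B)\to\coh^1(K_v,B)$, and your chase then shows that any $\xi\in\coh^1(\O_v,\mathcal N)\cap\ker(\coh^1(K_v,N)\to\coh^1(K_v,B))$ lies in $\image(\partial_v)$. But this does not establish $\coh^1(\O_v,\mathcal N)\subseteq\ker$: for that you must take an arbitrary $\xi\in\coh^1(\O_v,\mathcal N)$ and show its image in $\coh^1(K_v,B)$ is zero, and injectivity of the right-hand vertical map gives you nothing here (the argument becomes circular).

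What is actually needed is the stronger fact $\coh^1(\O_v,\mathcal B)=0$, which is exactly what your own justification (``$\coh^1$ of the special fibre of an abelian variety over a finite field is trivial'') proves, via the Henselian identification $\coh^1(\O_v,\mathcal B)\simeq\coh^1(\F_v,\bar B)$ and Lang's theorem; this is \cite[\S III.2.1]{mil86}, the reference the paper uses. With $\coh^1(\O_v,\mathcal B)=0$ the top row immediately gives $\coh^1(\O_v,\mathcal N)=\image(\partial_v)$, and since $\mathcal B'(\O_v)=B'(K_v)$ this equals $\image(\partial)=\ker(\coh^1(K_v,N)\to\coh^1(K_v,B))$. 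So the fix is to replace ``injectivity'' by ``vanishing'' and drop the chase; the rest of your write-up is correct and matches the paper.
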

\begin{proof}
The lemma follows from the fact that $\coh^1 (\O_v,\mathcal{B})=0$ (see \cite[\S III.2.1]{mil86})
and the commutative diagram of exact sequences
$$\xymatrix{ \mathcal{B}(\O_v) \ar[r]\ar@{=}[d] & \mathcal{B}'(\O_v) \ar[r] \ar@{=}[d] & \coh^1 (\O_v,\mathcal{N}) \ar[r]\ar@{^(->}[d] &
\coh^1(\O_v,\mathcal{B}) \ar[d]\\
B(K_v) \ar[r] & B'(K_v) \ar[r]  & \coh^1 (K_v,\mathrm{N}) \ar[r] & \coh^1(K_v,B).}
$$

\end{proof}
Let $\U\subset \X$ be an open subscheme. 
Define
$$ 
\mathcal S(N/\U):= \ker(\coh^1(K,N)\longrightarrow \prod_{v\in \U} \coh^1(K_v,B)).
$$
Denote $\Sel(N/K):=\mathcal S(N/\X)$, it is the kernel of 
$$\xymatrix{\mathcal S(N/\U) \ar[r] & \bigoplus_{v\not\in \U} \coh^1(K_v, B)}.$$
Let $\mathrm Q(N/\U)$ denote the cokernel of the localization map
$$\xymatrixcolsep{5pc}\xymatrix{\mathcal S(N/\U) \ar[r]^-{\mathcal L_{N/K}} & \bigoplus_{v\not\in \U} \coh^1(K_v, N)}.$$

\begin{lemma}\label{l:snk} 
If $\U\subset \X_g$, then $\coh^1(\U,\mathcal N)=\mathcal S(N/\U)$.
\end{lemma}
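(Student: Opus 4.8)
The plan is to identify $\coh^1(\U,\mathcal N)$ with $\mathcal S(N/\U)$ by unwinding both sides as subgroups of $\coh^1(K,N)$. First I would recall the definition of the flat cohomology group $\coh^1(\U,\mathcal N)$ and use the exact sequence coming from
$$\xymatrix{0\ar[r] & \mathcal N\ar[r] & \mathcal B\ar[r]^{\mathbf f} & \mathcal B'\ar[r] & 0}$$
over $\U$ (which is a legitimate short exact sequence of flat group schemes by Lemma \ref{l:isogen}, since $\U\subset\X_g$), giving a map $\coh^1(\U,\mathcal N)\to\coh^1(\U,\mathcal B)$ whose kernel is the image of $\mathcal B'(\U)$. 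The first point is that the natural restriction map $\coh^1(\U,\mathcal N)\to\coh^1(K,N)$ is injective, so that $\coh^1(\U,\mathcal N)$ is genuinely a subgroup of $\coh^1(K,N)$; this follows because $\mathcal N$ is finite flat and $N$ is its generic fibre, so an element of $\coh^1(\U,\mathcal N)$ that dies in $\coh^1(K,N)$ already dies in $\coh^1(\O_v,\mathcal N)$ for all $v\in\U$ by a limit/spreading-out argument, hence is zero.

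Next I would show the two subgroups of $\coh^1(K,N)$ coincide by a local criterion: a class $\xi\in\coh^1(K,N)$ lies in $\coh^1(\U,\mathcal N)$ if and only if for every $v\in\U$ its localization $\xi_v$ lies in the image of $\coh^1(\O_v,\mathcal N)\hookrightarrow\coh^1(K_v,N)$ (this is the standard description of cohomology of $\U$ with finite-flat coefficients as classes everywhere-locally coming from the model, the obstruction to globalizing living in a group that vanishes here because $\X$ is a curve and $\coh^2$ considerations reduce to the already-given injectivity; concretely one uses the same Mayer--Vietoris / excision comparison of $\coh^1(\U,\mathcal N)$ with $\coh^1(K,N)$ and the local groups $\coh^1(\O_v,\mathcal N)$). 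Then by Lemma \ref{l:dker}, for each $v\in\U$ we have
$$\coh^1(\O_v,\mathcal N)=\ker\bigl(\coh^1(K_v,N)\longrightarrow\coh^1(K_v,B)\bigr),$$
so the condition ``$\xi_v\in\coh^1(\O_v,\mathcal N)$ for all $v\in\U$'' is exactly ``$\xi_v\mapsto 0$ in $\coh^1(K_v,B)$ for all $v\in\U$'', which is precisely the defining condition for $\xi\in\mathcal S(N/\U)$. This gives both inclusions and hence the equality.

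The main obstacle I expect is the bookkeeping needed to justify that $\coh^1(\U,\mathcal N)$ really is the subgroup of $\coh^1(K,N)$ cut out by the local conditions $\xi_v\in\coh^1(\O_v,\mathcal N)$ — i.e. that there is no extra global obstruction. For $\G_m$-coefficients this would be a class-group/Brauer-group issue, but for the finite flat $\mathcal N$ over the affine curve $\U$ one reduces to the corresponding statement for $\mathcal B$ and uses $\coh^1(\O_v,\mathcal B)=0$ together with the vanishing already invoked in Lemma \ref{l:dker}; the cleanest route is to run the snake-lemma diagram of Lemma \ref{l:dker} globally over $\U$ in place of over $\O_v$, namely
$$\xymatrix{\mathcal B(\U)\ar[r]\ar@{=}[d] & \mathcal B'(\U)\ar[r]\ar@{=}[d] & \coh^1(\U,\mathcal N)\ar[r]\ar@{^(->}[d] & \coh^1(\U,\mathcal B)\ar[d]\\ B(K)\ar[r] & B'(K)\ar[r] & \coh^1(K,N)\ar[r] & \coh^1(K,B),}$$
and compare with the product of the local diagrams; chasing this identifies $\coh^1(\U,\mathcal N)$ with the classes in $\coh^1(K,N)$ trivial in $\coh^1(K_v,B)$ for all $v\in\U$, which is $\mathcal S(N/\U)$. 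Once that diagram is in place the rest is a routine diagram chase, so I would spend the bulk of the write-up making the comparison of the global and local diagrams precise.
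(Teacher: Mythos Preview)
Your overall strategy---identify both sides as subgroups of $\coh^1(K,N)$ and match them via the local criterion $\xi_v\in\coh^1(\O_v,\mathcal N)$ together with Lemma~\ref{l:dker}---is exactly the paper's approach. The paper makes the local criterion precise as follows: for an open affine $\V\subset\U$ the localization sequence \cite[III.0.3(c)]{mil86} gives
\[
0\longrightarrow \coh^1(\U,\mathcal N)\longrightarrow \coh^1(\V,\mathcal N)\longrightarrow \bigoplus_{v\in\U\setminus\V}\coh^1(K_v,N)/\coh^1(\O_v,\mathcal N),
\]
injectivity of $\coh^1(\V,\mathcal N)\to\coh^1(K,N)$ is quoted from \cite[Lemma~4.2]{gon09}, and the fact that every class in $\coh^1(K,N)$ spreads out to some $\coh^1(\V,\mathcal N)$ is \cite[Lemma~2.3]{gon09}. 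Your ``Mayer--Vietoris / excision'' remark is pointing at exactly this sequence; you should just write it down rather than leave it as a slogan.

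Where you should be careful is the ``cleanest route'' you propose at the end. The global diagram over $\U$ does give the inclusion $\coh^1(\U,\mathcal N)\subset\mathcal S(N/\U)$ (since $\coh^1(\U,\mathcal B)\to\coh^1(K_v,B)$ factors through $\coh^1(\O_v,\mathcal B)=0$ for $v\in\U$), but it does \emph{not} give the reverse inclusion by a diagram chase alone. Given $\xi\in\mathcal S(N/\U)$, its image in $\coh^1(K,B)$ is locally trivial at every $v\in\U$, but that does not tell you it lifts to $\coh^1(\U,\mathcal B)$, nor---even if it did---that $\xi$ itself lifts to $\coh^1(\U,\mathcal N)$. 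The equalities $\mathcal B(\U)=B(K)$, $\mathcal B'(\U)=B'(K)$ only show that the image of the connecting map $B'(K)\to\coh^1(K,N)$ lands in $\coh^1(\U,\mathcal N)$, which is strictly weaker than $\mathcal S(N/\U)\subset\coh^1(\U,\mathcal N)$. So drop the global-diagram alternative and carry out the localization-sequence argument; that is both what works and what the paper does.
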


\begin{proof} Let $\V\subset \U$ be an open affine subscheme.
By the localization sequence \cite[III.0.3(c)]{mil86} and the computation at the beginning of \cite[III.7]{mil86}, 
we have the exact sequence
\begin{equation*}\label{e:lose}
\xymatrix{0\ar[r] & \coh^1(\U,\mathcal{N})\ar[r] & \coh^1(\V,\mathcal{N}) \ar[r]  & \bigoplus_{v\in \U \setminus \V}
\coh^1(K_v,N)/\coh^1(\O_v, \mathcal{N}).}
\end{equation*}
\cite[Lemma 4.2]{gon09} says the natural map
$\coh^1(\V,\mathcal N)\longrightarrow \coh^1(K,N)$ is injective. The exact sequence implies $\coh^1(\U,\mathcal N)\subset\mathcal S(N/\U)$. By \cite[Lemma 2.3]{gon09},
an element in $\mathcal S(N/\U)$ can be obtained from $\coh^1(\V,\mathcal N)$ for some $\V\subset \mathcal U$, and the exact sequence implies it is in $\coh^1(\U,\mathcal{N})$. 

\end{proof}

For $\U\not=\X$, apply the local duality \cite[\S III.6.10]{mil86} and consider the composition 
\begin{equation}\label{e:locdual}
\xymatrixcolsep{5pc}\xymatrix{\mathcal S(C_p/\U) \ar@{^{(}->}[r]^-{\mathcal L_{C_p/\U}} & \bigoplus_{v\not\in \U} \coh^1(K_v, C_p)\ar[r]^-\sim & 
 \prod_{v\not\in \U} \coh^1(K_v, E_p^{(p)})^\vee,}
 \end{equation}
where the injectivity of $\mathcal L_{C_p/\U}$ is due to \cite[Main Theorem]{got12}.

\begin{lemma}\label{l:poitou}
If $\U\subset \X_g$ and $\U\not=\X$, then under {\em{\eqref{e:locdual}}}, the group $\mathcal S(C_p/\U)$ is the Pontryagin dual of $Q(E_p^{(p)}/\U)$.
\end{lemma}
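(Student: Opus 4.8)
The plan is to run the Poitou–Tate nine-term exact sequence for the finite flat group scheme $\mathcal N = C_p$ (equivalently $\mathrm{N}=C_p$ as Galois module) over the open curve $\U$, together with the Cartier duality $\widehat{C_p}=E_p^{(p)}$ supplied by Lemma \ref{l:isogen} applied to $\mathrm{f}=\mathsf F:A\to A^{(p)}$ (whose dual isogeny is $\hat{\mathrm f}=\mathsf V$, so that $\widehat{C_p}=\ker\mathsf V = E_p^{(p)}$). The relevant portion of that sequence, in the form of \cite[\S III.3 or \S III.7]{mil86} (using flat cohomology, as the excerpt does), reads
\begin{equation*}
\xymatrix{\coh^1(\U,\mathcal N) \ar[r]^-{\mathcal L} & \bigoplus_{v\notin\U}\coh^1(K_v,C_p) \ar[r] & \coh^1_c(\U,\widehat{C_p})^\vee \ar[r] & \coh^2(\U,\mathcal N),}
\end{equation*}
and I would combine it with the identifications $\coh^1(\U,\mathcal N)=\mathcal S(C_p/\U)$ (Lemma \ref{l:snk}) and, on the dual side, the fact that $\coh^1_c(\U,\widehat{C_p})$ surjects onto (or is an extension involving) $\mathcal S(E_p^{(p)}/\U)=\coh^1(\U,\widehat{\mathcal N})$ with the relevant discrepancy captured by the local terms $\bigoplus_{v\notin\U}\coh^1(K_v,E_p^{(p)})$. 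Thus $\mathrm Q(C_p/\U)=\operatorname{coker}(\mathcal L_{C_p/\U})$ is, by exactness, dual to the kernel of the map $\coh^1(\U,\widehat{\mathcal N})\to\bigoplus_{v\notin\U}\coh^1(K_v,E_p^{(p)})$, but wait — I want $\mathcal S(C_p/\U)$ dual to $\mathrm Q(E_p^{(p)}/\U)$, so I must set the sequence up symmetrically in $C_p$ and $E_p^{(p)}$.

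Concretely, the cleanest route is: write down the Poitou–Tate exactness
\begin{equation*}
\xymatrix{\mathcal S(E_p^{(p)}/\U) \ar[r] & \bigoplus_{v\notin\U}\coh^1(K_v,E_p^{(p)}) \ar[r] & \mathcal S(C_p/\U)^\vee \ar[r] & \coh^2(\U,E_p^{(p)}) \ar[r] & \cdots,}
\end{equation*}
which is the dual of
\begin{equation*}
\xymatrix{\mathcal S(C_p/\U) \ar[r]^-{\mathcal L_{C_p/\U}} & \bigoplus_{v\notin\U}\coh^1(K_v,C_p) \ar[r] & \mathcal S(E_p^{(p)}/\U)^\vee \ar[r] & \coh^2(\U,C_p).}
\end{equation*}
The injectivity of $\mathcal L_{C_p/\U}$ (from \cite[Main Theorem]{got12}, noted after \eqref{e:locdual}) forces the transpose map $\bigoplus_{v\notin\U}\coh^1(K_v,E_p^{(p)})\to\mathcal S(C_p/\U)^\vee$ to be surjective; dually, the image of $\mathcal L_{C_p/\U}$ in $\bigoplus_{v\notin\U}\coh^1(K_v,C_p)$ is exactly the annihilator of the image of $\mathcal S(E_p^{(p)}/\U)$ in the dual group $\bigoplus_{v\notin\U}\coh^1(K_v,E_p^{(p)})$ under the local pairing \eqref{e:locdual}. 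Taking orthogonal complements inside the finite self-dual pairing $\bigoplus_{v\notin\U}\coh^1(K_v,C_p)\times\bigoplus_{v\notin\U}\coh^1(K_v,E_p^{(p)})\to\Q/\Z$, the cokernel $\mathrm Q(E_p^{(p)}/\U)$ of the localization of $\mathcal S(E_p^{(p)}/\U)$ is canonically dual to the image of $\mathcal L_{C_p/\U}$, which—since $\mathcal L_{C_p/\U}$ is injective—is $\mathcal S(C_p/\U)$ itself. This is precisely the assertion.

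The main obstacle is bookkeeping: making sure the Poitou–Tate sequence I invoke is the one for the open scheme $\U$ with cohomology with compact support on the dual side, and that the passage from $\coh^i_c(\U,-)$ to the Selmer-type groups $\mathcal S(-/\U)$ and the finite local sums is done correctly (this is where Lemma \ref{l:snk}, Lemma \ref{l:dker}, \eqref{e:h1w} and the perfectness of the local duality \cite[\S III.6.10]{mil86} all get used). I would also need to check compatibility of the Cartier duality $\mathcal N\leftrightarrow\widehat{\mathcal N}$ from Lemma \ref{l:isogen} with the generic-fibre Tate local duality pairing on $C_p$ and $E_p^{(p)}$, so that the local orthogonality statement is with respect to the same pairing appearing in \eqref{e:locdual}; this compatibility is standard but should be cited. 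Once these identifications are in place, the result is a formal consequence of exactness plus taking annihilators, with no further computation.
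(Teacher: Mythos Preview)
Your approach is essentially the same as the paper's: extend $\mathsf F,\mathsf V$ to the N\'eron models to get finite flat Cartier-dual group schemes $\mathcal C_p,\mathcal E_p^{(p)}$ over $\U\subset\X_g$ (Lemma~\ref{l:isogen}), identify $\coh^1(\U,\mathcal C_p)=\mathcal S(C_p/\U)$ via Lemma~\ref{l:snk}, and then invoke Poitou--Tate duality. The paper simply cites \cite[(5.1.2)]{ces15} for the last step (which is the right reference here, since $C_p$ has order $p$ in characteristic $p$), whereas you spell out the orthogonal-complement argument from the nine-term sequence; both are fine.
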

\begin{proof}
Extend $\mathsf F$ and $\mathsf V$ to $\mathcal{F}:\mathcal{A}\longrightarrow \mathcal{A}^{(p)}$ and 
$\mathcal{V}:\mathcal{A}^{(p)}\longrightarrow \mathcal{A}$ over ${\X_g}$.
Denote
$$\mathcal{C}_p =\ker ( \mathcal{F})\;\;\text{and}\;\; \mathcal{E}_{p}^{(p)}=\ker ( \mathcal{V}).$$
They are Cartier dual to each other. By Lemma \ref{l:snk}, we identify $\coh^1(\U,\mathcal C_p)$ with
$\mathcal S(C_p/\U)$.  Then the lemma follows from  Poitou-Tate duality \cite[(5.1.2)]{ces15}.
\end{proof}

In view of Lemma \ref{l:dker}, the following lemme generalizes the fact that for any place $v\in \X_{g}$, the local duality identifies 
$\coh^1(\O_v, \mathcal C_p)\subset \coh^1(K_v,C_p)$ with the annihilator of $\coh^1(\O_w,\mathcal E_p^{(p)})\subset \coh^1(K_v,E_p^{(p)})$ \cite[\S III, Theorem 7.1]{mil86}.

\begin{lemma}\label{l:perp} At each place $v$ of $K$, under the duality $\coh^1(K_v,C_p)=\coh^1(K_v,E_p^{(p)})^\vee$ \cite[\S III. Theorem 6.10]{mil86},
the kernel of $j'_v:\coh^1(K_v,C_p)\longrightarrow  \coh^1(K_v, A)$, as a subgroup of $\coh^1(K_v,C_p)$,
is exactly the annihilator of $\ker(j_v)\subset \coh^1(K_v,E_p^{(p)})$. 
\end{lemma}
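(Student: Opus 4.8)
The plan is to deduce the statement from the two exact sequences \eqref{e:fec} and \eqref{e:ve} together with the compatibility of Poitou--Tate local duality with cup products. First I would observe that the local pairing $\coh^1(K_v,C_p)\times\coh^1(K_v,E_p^{(p)})\longrightarrow\Q/\Z$ of \cite[\S III, Theorem 6.10]{mil86} is, up to sign, the cup product $\coh^1(K_v,C_p)\times\coh^1(K_v,E_p^{(p)})\longrightarrow\coh^2(K_v,C_p\otimes E_p^{(p)})\longrightarrow\coh^2(K_v,\mu_p)=\Q/\Z$, where the identification $C_p\otimes E_p^{(p)}\simeq\mu_p$ is the Cartier duality coming from the Weil pairing on $A_p^{(p)}$ restricted via \eqref{e:fec} and \eqref{e:ve}. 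The key point is that this Cartier duality is exactly the one under which $C_p=\ker\mathsf F$ and $E_p^{(p)}=\ker\mathsf V$ are dual, and that $\mathsf V\colon A_p^{(p)}\to C_p$ (the map in \eqref{e:ve} composed with the inclusion, i.e. the connecting data) and $\mathsf F\colon A_p\to E_p^{(p)}$ (the map in \eqref{e:fec}) are adjoint to each other with respect to the Weil pairings on $A_p$ and $A_p^{(p)}$. This adjointness, $e_{A^{(p)}}(\mathsf F x,y)=e_A(x,\mathsf V y)$, is the algebraic heart of the matter and is standard (it expresses $\mathsf V=\widehat{\mathsf F}$).

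Next I would translate the two kernels into these terms. By \eqref{e:jv}, $\ker(j_v)=\image\big(\partial\colon A(K_v)\to\coh^1(K_v,E_p^{(p)})\big)=\coker\big(\mathsf V_*\colon A^{(p)}(K_v)\to A(K_v)\big)$, i.e. $\ker(j_v)$ is the image of the connecting map for \eqref{e:ve}. Dually, from \eqref{e:fec} we get a connecting map $\partial'\colon A(K_v)\to\coh^1(K_v,C_p)$ and $\ker\big(j'_v\colon\coh^1(K_v,C_p)\to\coh^1(K_v,A)\big)=\image(\partial')$ — here I should be slightly careful, since \eqref{e:fec} has $A_p$ in the middle term and $E_p^{(p)}$ on the right; the sequence I actually want is the one with $C_p$ on the left and quotient an elliptic curve, namely $0\to C_p\to A\xrightarrow{\mathsf F}A^{(p)}\to0$, whose connecting map $\partial'\colon A^{(p)}(K_v)\to\coh^1(K_v,C_p)$ has image equal to $\ker(j'_v)$ by definition of $j'_v$ (the map $\coh^1(K_v,C_p)\to\coh^1(K_v,A)$). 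So both $\ker(j_v)$ and $\ker(j'_v)$ are images of coboundary maps, for the two mutually dual short exact sequences $0\to E_p^{(p)}\to A^{(p)}\xrightarrow{\mathsf V}A\to0$ and $0\to C_p\to A\xrightarrow{\mathsf F}A^{(p)}\to0$.

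Then the statement is the standard fact that, under a perfect duality of two finite Galois modules arising from dual short exact sequences, the image of one coboundary map and the image of the other are exact annihilators of each other. Concretely: for $x\in A^{(p)}(K_v)$ and $\xi\in\coh^1(K_v,E_p^{(p)})$ with $\xi$ in the image of $A(K_v)$, i.e. $\xi=\partial a$, the cup product $\partial'(x)\cup\xi$ lives in $\coh^2(K_v,\mu_p)$ and can be computed as a cup product pushed one degree, which vanishes because it factors through $\coh^2$ of the elliptic curve (or, more cleanly, because $\partial'(x)\cup\partial(a)=\pm\,\delta(x\cup a)$ with $x\cup a\in\coh^0$ of something already killed) — so $\image(\partial')\subseteq\ker(j_v)^{\perp}$. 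Equality of the two annihilators then follows by counting: $|\image(\partial')|\cdot|\ker(j_v)^{\perp}|\le|\coh^1(K_v,C_p)|$ forces equality once one knows $|\ker(j_v)|\cdot|\ker(j'_v)|=|\coh^1(K_v,E_p^{(p)})|$, which in turn is the local Euler-characteristic/duality bookkeeping for the pair of dual exact sequences (each coboundary image has order equal to $[A(K_v):\mathsf V_*A^{(p)}(K_v)]$ resp. $[A^{(p)}(K_v):\mathsf F_*A(K_v)]$, and these multiply correctly against $|\coh^1(K_v,E_p^{(p)})|$ via the long exact sequences and local duality for $A$, $A^{(p)}$). I expect the main obstacle to be pinning down the sign/compatibility in the first paragraph — that the Milne local pairing on $C_p$ versus $E_p^{(p)}$ is genuinely the one induced by $\mathsf V=\widehat{\mathsf F}$ and the Weil pairing, rather than some other Cartier duality — and keeping the two short exact sequences (the ones with an elliptic-curve quotient, not the ones with $A_p^{(p)}$, $A_p$ in the middle) straight so that $j_v$ and $j'_v$ really are the maps appearing in the lemma.
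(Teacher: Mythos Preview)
Your outline is essentially correct and shares the paper's setup: both identify $\ker(j_v)$ and $\ker(j'_v)$ as images of the connecting maps coming from the dual short exact sequences $0\to E_p^{(p)}\to A^{(p)}\xrightarrow{\mathsf V}A\to 0$ and $0\to C_p\to A\xrightarrow{\mathsf F}A^{(p)}\to 0$, and both recognise that the adjointness $\mathsf V=\widehat{\mathsf F}$ is what ties the two pairings together.

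Where you diverge is in the final step. You propose to prove one containment via a direct vanishing $\partial'(x)\cup\partial(a)=\pm\delta(x\cup a)$ and then close up by a counting argument using local Euler characteristics. The vanishing formula as written is the shaky part: ``$x\cup a$'' with $x\in A^{(p)}(K_v)$ and $a\in A(K_v)$ does not literally make sense, and the boundary-of-cup-product identities do not produce this relation without further work. The paper avoids this entirely by passing through Tate local duality for the abelian variety $A^{(p)}$ itself (\cite[III, Theorem 7.8]{mil86}): it factors the connecting map $\partial:A^{(p)}(K_v)\to\coh^1(K_v,C_p)$ through the Kummer inclusion $A^{(p)}(K_v)/p\hookrightarrow\coh^1(K_v,A_p^{(p)})$ followed by $\mathsf V_*:\coh^1(K_v,A_p^{(p)})\to\coh^1(K_v,C_p)$, verifies this factorisation with \v{C}ech cocycles, and then reads off from the compatibility of the $A_p^{(p)}$-pairing with the $C_p\times E_p^{(p)}$-pairing that $\partial$ is literally the dual of $j_v$. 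Once $\partial=j_v^\vee$, the equality $\image(\partial)=\ker(j_v)^\perp$ is immediate, with no counting needed. Your counting route would work, but the adjointness $\langle\partial x,\xi\rangle=\langle x,j_v\xi\rangle$ (which you implicitly invoke) already gives both containments at once, so the Euler-characteristic bookkeeping is redundant.
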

We abbreviate the above as 
\begin{equation}\label{e:abb}
\ker (j_v')=\ker(j_v)^\perp.
\end{equation}

\begin{proof} Let $\partial$ denote the connecting homomorphism in the long exact sequence
\begin{equation}\label{e:partial}
\xymatrix{\cdots \ar[r] & A(K_v)\ar[r]^-{\mathsf{F}} & A^{(p)}(K_v) \ar[r]^-{\partial} & \coh^1(K_v,C_p)\ar[r]^-{j'_v} & \coh^1(K_v, A) \ar[r] &\cdots,}
\end{equation}
that, since $p=\mathsf{F}\circ \mathsf{V}$, gives rise to the homomorphism $\bar\partial$ in the diagram
\begin{equation}\label{e:apppartial}
\xymatrix{  A^{(p)}(K_v)/p\cdot A^{(p)}(K_v) \ar[r]^-{\bar\partial} \ar@{^{(}->}[d]^-{i} & \coh^1(K_v,C_p) \ar@{=}[d] \\
 \coh^1(K_v,A_p^{(p)})  \ar[r]^-{\mathsf{V}_*}  & \coh^1(K_v,C_p), }
\end{equation}
where the bottom left-arrow is due to \eqref{e:ve} and  the left down-arrow $i$ is induced from the long exact sequence
$$\xymatrix{  A_p^{(p)}(K_v)\ar[r] &  A^{(p)}(K_v)\ar[r]^-{[p]} &  A^{(p)}(K_v) \ar[r] &  \coh^1(K_v,A_p^{(p)})\ar[r] & \cdots.}
$$
We claim that the diagram \eqref{e:apppartial} is commutative, so that the commutative diagram
$$\xymatrixcolsep{5pc}\xymatrix{  
\coh^1(K_v, A_p^{(p)})\times \coh^1(K_v,A_p^{(p)}) \ar@<-7ex>[d]_-{\mathsf{V}_*} \ar[r]^-{(-,-)_v} & \Q/\Z\ar@{=}[d]\\
\coh^1(K_v,C_p) \times \coh^1(K_v, E_p^{(p)})\ar@<-7ex>[u]_-{j_v} \ar[r]^-{(-,-)_v} & \Q/\Z,}
$$
where  
the left-arrows are local pairings,
yields the commutative diagram (see \cite[\S III, Theorem 7.8]{mil86})
\begin{equation*}\label{e:dualap}
\xymatrixcolsep{5pc}\xymatrix{  
A^{(p)}(K_v)\times \coh^1(K_v,A^{(p)}) \ar@<-7ex>[d]_-{\partial} \ar[r]^-{(-,-)_{A^{(p)}/K_v}} & \Q/\Z\ar@{=}[d]\\
\coh^1(K_v,C_p) \times \coh^1(K_v, E_p^{(p)})\ar@<-7ex>[u]_-{j_v} \ar[r]^-{(-,-)_v} & \Q/\Z.}
\end{equation*}
This shows that $\ker (j_v)^\perp=\image (\partial_v)=\ker(j'_v)$. To prove the claim, we use $\check{\text C}$ech
cocycles (see \cite[\S III.2.10]{mil80}). Let $x\in A^{(p)}(K_v)$ and denote $\bar x$ its image in $ A^{(p)}(K_v)/p\cdot A^{(p)}(K_v)$.
Let $y\in A^{(p)}(K'_w)=\Hom(\mathrm{Spec}( K'_w), A^{(p)})$ be a $p$-division point of $x$ over a finite extension $K'_w$. 
Let $pr_l$, $l=1,2$, be the projection
$$ \mathrm{Spec}( K'_w)\times_{\mathrm{Spec}( K_v)}\mathrm{Spec}( K'_w)\longrightarrow \mathrm{Spec}( K'_w)$$
to the $l$'th factor. Then $\xi:= y\circ pr_1 - y\circ pr_2$ is a $1$-cocycle representing the class $i(\bar x)$. Let $z=\mathsf V(y)
\in \Hom(\mathrm{Spec}( K'_w), A)$ so that $\mathsf F(z)=x$. Then $\mathsf V\circ \xi$ is a $1$-cocycle representing both
$\bar\partial(\bar x)$ and $\mathsf V_*(i(\bar x))$.

\end{proof}

\subsection{The conductor}\label{su:gh1}

Recall that $k=K(E_p^{(p)}(\bar K^s))$ so that $E_p^{(p)}(\bar K^s)=E_p^{(p)}(k)$, on which
the action of the Galois group 
$\Phi:=\Gal(k/K)$ 
is given by 
an injective character 
$$c:\Phi\longrightarrow \F_p^*$$ 
such that $\tensor[^g]x{}=c(g)\cdot x$, for $g\in \Phi$, $x\in E_p^{(p)}(k)$. 
Since the order of $\Phi$ is prime to $p$, the Hochschild-Serre spectral sequence 
(see \cite[\S II.2.21(a)]{mil80}) yields
\begin{equation}\label{e:rest}
\xymatrix{\coh^1(K,E_p^{(p)})\ar[r]^-\sim&  \coh^1(k,E_p^{(p)})^\Phi\ar@{=}[r] & \Hom(G_k, E_p^{(p)}(k))^\Phi}.
\end{equation}

For $w\in S_{ss}(k)$,  let $\iota^{(p)}(t_1)$ be a non-zero element of $E_p^{(p)}(k_w)$ as in \S\ref{su:ss}. Put
 
$$ M_w:=\begin{cases} (1+t_1^p\O_w)\cdot (\O_w^*)^p, & \text{if}\; w\in  S_{ss}(k);\\  
k_w^*, & \text{if}\; w\in \eth(k);\\
\O_w^*, &\text{otherwise}.
\end{cases}
$$ 
Let $\A_k^*$ denote the group of ideles of $k$ and $\mathscr W$ the $p$-completion of
$k^*\backslash \A_k^*/ \prod_{w}M_w$. 

\begin{lemma}\label{l:sele}
We have $\Sel(E_p^{(p)}/k)=\Hom (\mathscr W,E_p^{(p)}(k))$.
\end{lemma}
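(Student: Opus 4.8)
The plan is to identify the Selmer group $\Sel(E_p^{(p)}/k)$ with a subgroup of $\coh^1(k,E_p^{(p)})=\Hom(G_k,E_p^{(p)}(k))$ cut out by local conditions, and then translate those conditions into the single adelic quotient $\mathscr W$ by global class field theory. First I would recall that $\Sel(E_p^{(p)}/k)$ is by definition the kernel of $\coh^1(k,E_p^{(p)})\longrightarrow \prod_w \coh^1(k_w, A^{(p)})$, which (factoring through $j_w$) is the same as the kernel of $\coh^1(k,E_p^{(p)})\longrightarrow \prod_w \coh^1(k_w,E_p^{(p)})/\ker(j_w)$. So a class $\xi$ lies in the Selmer group if and only if for every place $w$ its localization $\xi_w$ lies in $\ker(j_w)$.

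Next I would plug in the local computations of $\ker(j_w)$ from \S\ref{s:loc}. Under the identification $\coh^1(k_w,E_p^{(p)})=\Hom(k_w^*/(k_w^*)^p,\Z/p\Z)$ of \eqref{e:h1hom}, the three cases of $M_w$ correspond exactly to the three local behaviors: at a supersingular place $w\in S_{ss}(k)$, Lemma \ref{l:sscond} (in the form \eqref{e:sscond}) says $\ker(j_w)$ is the set of homomorphisms killing $(1+\pi_w^{pn_w/(p-1)}\O_w)\cdot(k_w^*)^p$, and since each $t_u\in k_w$ one has $\ord_w(t_1)=n_w/(p-1)$, so $1+t_1^p\O_w = 1+\pi_w^{pn_w/(p-1)}\O_w$ up to the appropriate identification — giving $M_w=(1+t_1^p\O_w)\cdot(\O_w^*)^p$ precisely; at a place $w\in\eth(k)$ of bad reduction outside $\eth'$ the imposed condition is vacuous, matching $M_w=k_w^*$; and at every other place (good ordinary, or the irrelevant $\eth'$-places which do not occur over $k$) $\ker(j_w)$ is the unramified subgroup $\Hom(k_w^*/\O_w^*(k_w^*)^p,\Z/p\Z)$, matching $M_w=\O_w^*$. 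Here one uses Lemma \ref{l:good} for good ordinary $w$ and Lemma \ref{l:ss}/Lemma \ref{l:mult} for the multiplicative ones; note that over $k$ a non-split multiplicative place splits or stays inert but in either case the relevant condition reduces to the unramified one.

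Having matched the local conditions, the final step is the class field theory bookkeeping. A homomorphism $\phi:G_k\to\Z/p\Z$ lies in $\Sel(E_p^{(p)}/k)$ exactly when, viewed via $G_k^{\mathrm{ab}}\cong \A_k^*/k^*$ as a homomorphism on the ideles trivial on $k^*$, it kills the local subgroup $M_w$ at each $w$; equivalently it factors through the $p$-completion $\mathscr W$ of $k^*\backslash \A_k^*/\prod_w M_w$. That identifies $\Sel(E_p^{(p)}/k)$ with $\Hom(\mathscr W,\Z/p\Z)=\Hom(\mathscr W,E_p^{(p)}(k))$ (the two being canonically the same since $E_p^{(p)}(k)\cong\Z/p\Z$, though one must track the $\Phi$-action: both sides carry the same action through $c$, so they agree $\Phi$-equivariantly and hence as honest groups before and after taking invariants in \eqref{e:rest}). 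The main obstacle I anticipate is the bookkeeping at the supersingular places: one has to verify carefully that the conductor bound $\frac{pn_w}{p-1}$ from Lemma \ref{l:sscond}, which was phrased in terms of ramification of $k_{w,\xi}/k_w$, translates under local class field theory precisely to annihilation of $(1+t_1^p\O_w)\cdot(\O_w^*)^p$ — i.e. that $\ord_w(t_1^p)=\frac{pn_w}{p-1}$ is an integer and the higher unit group is the norm group of the conductor-$\frac{pn_w}{p-1}$ extensions — which is exactly the content already extracted in \eqref{e:sscond}, so in fact the lemma follows by combining \eqref{e:sscond}, Lemma \ref{l:good}, Lemma \ref{l:mult}, and the definition of $\mathscr W$.
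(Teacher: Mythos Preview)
Your proposal is correct and follows essentially the same approach as the paper: identify the Selmer condition place-by-place with the subgroups $M_w$ via Lemmas~\ref{l:mult}, \ref{l:good}, \ref{l:sscond} (and \eqref{e:sscond}), then package everything through global class field theory into $\mathscr W$. One small inaccuracy in your write-up: the $\eth'$-places \emph{do} occur over $k$ when $p=2$ (since then $k=K$), but your conclusion that $M_w=\O_w^*$ there is nonetheless correct by Lemma~\ref{l:mult}.
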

\begin{proof}
By the global class field theory, $\Hom (\mathscr W,E_p^{(p)}(k))\subset \Hom(G_k, E_p^{(p)}(k))$
consists of elements which are locally trivial at $w\in\eth (k)$, having conductors not greater than
$\ord_w(t_1^p)$ at supersingular places $w$, unramified at others. 
The lemma follows from Lemma \ref{l:mult}, \ref{l:good} and
\ref{l:sscond}.
   
\end{proof}

Every pro-$p$ $\Phi$-module $Y$ can be decomposed as
$Y=\bigoplus_{\chi \in {\hat{\Phi}}} Y^\chi$,
where for each $\chi$, $Y^\chi$ denote the $\chi$-eigenspace $\{y\in Y\;\mid\; \tensor[^g]y{}=\chi(g)\cdot y\}$ .
By \eqref{e:rest} and Lemma \ref{l:sele},
\begin{equation}\label{e:wc}
\Sel(E_p^{(p)}/K)=\Hom(\mathscr W,E_p^{(p)}(k))^\Phi=\Hom(\mathscr W^c,\Z/pZ).
\end{equation}

For $v\in S_{ss}$, put $W_v:=\bigoplus_{w\mid v} k_w^*/((k_w^*)^p\cdot M_w)$, and 
$ U_v:=\bigoplus_{w\mid v} \O_w^*/ M_w$ regarded as a subgroup of $W_v$.

\begin{lemma}\label{l:uc} If $v\in S_{ss}$, then $|U_v^c|=q_v^{n_v}$.
\end{lemma}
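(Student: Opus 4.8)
The plan is to match $U_v^c$, up to an explicit factor of $p^{\epsilon_v}$, with the local kernel $\ker(j_v)$, whose order $p^{\epsilon_v}q_v^{n_v}$ is already recorded in \eqref{e:kerj}; dividing then gives the assertion, so no new filtration computation is needed.

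Fix a place $w$ of $k$ above $v$ and set $\Phi_w=\Gal(k_w/K_v)\subseteq\Phi$, a cyclic group of order prime to $p$. Over $K_v$ the \'etale group $E_p^{(p)}$ is geometrically constant, $G_{K_v}$ acting on $E_p^{(p)}(k_w)=E_p^{(p)}(k)=\F_p$ through $\Phi_w$ by $c|_{\Phi_w}$. First I would use the inflation--restriction sequence for $k_w/K_v$, whose outer terms $\coh^i(\Phi_w,E_p^{(p)}(k_w))$ vanish (being killed both by $|\Phi_w|$ and by $p$); it gives a $\Phi_w$-equivariant isomorphism $\coh^1(K_v,E_p^{(p)})\xrightarrow{\sim}\coh^1(k_w,E_p^{(p)})^{\Phi_w}$, and by functoriality of local class field theory the target is $\big(\Hom(k_w^*/(k_w^*)^p,\F_p)\otimes\F_p(c)\big)^{\Phi_w}$, where $\Phi_w$ acts naturally on $k_w^*$ and by $c$ on $\F_p(c)$. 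Running the same argument for $A^{(p)}$ on $p$-primary parts, restriction carries $\ker(j_v)$ isomorphically onto $\ker(j_w)^{\Phi_w}$. By \eqref{e:sscond}, $\ker(j_w)$ is the $\Phi_w$-submodule $\Hom(Q,\F_p)\otimes\F_p(c)$, where $Q:=k_w^*/\big((1+\pi_w^{pn_w/(p-1)}\O_w)(k_w^*)^p\big)$ carries its natural $\Phi_w$-action; since $\F_p[\Phi_w]$ is semisimple, a $\Phi_w$-equivariant homomorphism $Q\to\F_p(c)$ must kill every isotypic component of $Q$ except the $c$-component, while any $\F_p$-linear map on the $c$-component is automatically equivariant. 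Hence $|\ker(j_v)|=|Q^c|$.

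Next I would split off the unit part of $Q$. The valuation gives a $\Phi_w$-equivariant exact sequence $0\to \O_w^*/M_w\to Q\to \Z/p\Z\to 0$ with trivial quotient, where $\O_w^*/M_w=D_m/D_m^p$ for $m=pn_w/(p-1)=\ord_w(t_1^p)$; semisimplicity of $\F_p[\Phi_w]$ splits it, so $Q^c=(\Z/p\Z)^c\oplus(\O_w^*/M_w)^c$. The eigenspace $(\Z/p\Z)^c$ is all of $\Z/p\Z$ when $\Phi_w=1$, i.e. $k_v=K_v$, and $0$ otherwise, so it has order $p^{\epsilon_v}$. Finally, $\Phi$ permutes the places over $v$ transitively with stabilizer $\Phi_w$, so $U_v=\bigoplus_{w\mid v}\O_w^*/M_w\cong\mathrm{Ind}_{\Phi_w}^{\Phi}(\O_w^*/M_w)$, and Frobenius reciprocity gives $U_v^c\cong(\O_w^*/M_w)^{c|_{\Phi_w}}$, whence $|U_v^c|=|(\O_w^*/M_w)^c|$. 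Combining, $|\ker(j_v)|=p^{\epsilon_v}\,|U_v^c|$, which together with \eqref{e:kerj} yields $|U_v^c|=q_v^{n_v}$.

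The main obstacle is keeping the $\Phi_w$-equivariance honest: one has to invoke local class field theory functorially, so that the Galois action on $\coh^1(k_w,E_p^{(p)})$ really is the $c|_{\Phi_w}$-twist of the natural action on $k_w^*/(k_w^*)^p$, and one has to check that the error terms in the inflation--restriction sequences (for $E_p^{(p)}$, for $A^{(p)}$, and implicitly in identifying $\ker(j_v)$ with $\ker(j_w)^{\Phi_w}$) all vanish after passing to $p$-primary parts. Granting that, the rest is semisimple $\F_p[\Phi_w]$-representation theory together with the already proved \eqref{e:sscond} and \eqref{e:kerj}.
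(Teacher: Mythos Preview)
Your argument is correct and is essentially the paper's own proof: both combine the Hochschild--Serre/inflation--restriction identification of $\coh^1(K_v,E_p^{(p)})$ with invariants at the $k$-level, the description \eqref{e:sscond} of $\ker(j_w)$, the valuation exact sequence splitting off the unit part, and the formula \eqref{e:kerj} for $|\ker(j_v)|$. The only cosmetic difference is that the paper works directly with the full $\Phi$-module $W_v=\bigoplus_{w\mid v}k_w^*/((k_w^*)^pM_w)$ and its $c$-eigenspace (obtaining $\ker(j_v)\simeq\Hom(W_v^c,\Z/p\Z)$ and then $|W_v^c|=p^{\epsilon_v}|U_v^c|$), whereas you fix one $w$, compute with the $\Phi_w$-module $Q$, and then invoke Frobenius reciprocity $U_v^c\simeq(\O_w^*/M_w)^{c|_{\Phi_w}}$ at the end; these are two equivalent packagings of the same computation.
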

\begin{proof}
Again, by the Hochschild-Serre spectral sequence 
\begin{equation}\label{e:restloc}
\xymatrix{\coh^1(K_v,E_p^{(p)})\ar[r]^-\sim&  (\bigoplus_{w\mid v}\coh^1(k_w,E_p^{(p)}))^\Phi
\ar@{=}[r] &(\bigoplus_{w\mid v}\Hom(G_{k_w},E_p^{(p)}(k)))^\Phi .}
\end{equation}
Therefore, Lemma \ref{l:sscond} implies $\ker (j_v)\simeq \Hom(W_v^c, \Z/p\Z)$. Then the lemma
follows from \eqref{e:kerj}, because if $k_w\not=K_v$, then $W_v^c=U_v^c$; if $k_w=K_v$,
then we have the splitting exact sequence
$$0\longrightarrow U_v^c\longrightarrow W_v^c\longrightarrow \Z/p\Z\longrightarrow 0.$$

\end{proof}

\subsubsection{An idelic computation}\label{ss:gen}
In this subsection only, we consider a general situation in which for $w\in S_{ss}(k)$, the $M_w$ in the previous subsection is replaced by
$$M_w:=(1+\pi_w^{a_v}\O_w)\cdot (\O_w^*)^p,$$
where $v$ is a place of $K$ sitting below $w$ and $a_v$ is a chosen integer depending only on $v$,
and we keep $M_w$ unchanged for other $w$. Denote $\mathfrak a:=(a_v)_{v\in S_{ss}}$. Then let $\mathscr W_{\mathfrak a}$ 
be the $p$-completion of $k^*\backslash \A_k^*/ \prod_{w}M_w$ so that $\mathscr W=\mathscr W_{\mathfrak o}$, where 
$\mathfrak o:=(p\cdot \ord_wt_v)_{v\in S_{ss}}$. 
Put
$U_{\mathfrak a}:=\bigoplus_{w\in S_{ss}} \O_w^*/ M_w$,  $W_{\mathfrak a}:=\bigoplus_{w\in S_{ss}}k_w^*/((k_w^*)^p\cdot M_w)$. Assume that
\begin{equation}\label{e:assume}
|U_{\mathfrak a}^c|=\prod_{v\in S_{ss}}q_v^{\rho_v}.
\end{equation}

Let $\bar U^c_{\mathfrak a}$ denote the image of the natural map
$$\varsigma^c_{\mathfrak a}:U_{\mathfrak a}^c \longrightarrow  \mathscr W^c_{\mathfrak a}.$$
Let $V$ be the group of $\eth(k)$-units 
of $k$. Then $\ker(\varsigma^c)$ equals the image of 
$$\varrho^c_{\mathfrak a}:(V/pV)^c\longrightarrow U_{\mathfrak a}^c$$
induced by the localization map $V \longrightarrow \prod_{w\in S_{ss}(k)} \O_w^*$. 
The torsion part of $V$ is finite
of order prime to $p$. The map $V\longrightarrow \prod_{v\in \eth}R_v$, where $R_v:=\prod_{w\mid v} k_w^*/\O_w^*$, 
is injective on the free part of $V$.  The module $R_v$ is fixed by the decomposition subgroup $\Phi_v$ and is the
regular representation of $\Phi/\Phi_v$. 
If $k=K$, the $\Z_p$-rank of $(\Z_p\otimes_{\Z} V)^c$ is $\mathrm{max}\{|\eth_0|-1,0\}$; otherwise it is $|\eth_0|$, so
\begin{equation}\label{e:ethkeric}
|\ker (\varsigma^c_{\mathfrak a})|=|\image \varrho^c_{\mathfrak a}|\leq p^{|\eth_0|}.
\end{equation}
If $\eth=\emptyset$, 
let $\gimel$ denote the $p$-completion of the divisor class group of $k$; otherwise, let $\gimel$ be the $\eth(k)$-class group.
Then $\hslash$ is the $p$-rank of $\gimel$.
The exact sequence
$$\xymatrix{0\ar[r] & \bar U^c_{\mathfrak a}\ar[r] & \mathscr W^c_{\mathfrak a} \ar[r] & \gimel^c \ar[r] & 0} $$
yields
\begin{equation}\label{e:uw}
\xymatrix{0\ar[r] & \bar U^c_{\mathfrak a}\ar[r] & \mathscr W^c_{\mathfrak a}[p] \ar[r]^-{\varkappa^c} & \gimel^c[p] }
\end{equation}
Put
\begin{equation}\label{e:tau}
\tau:=\begin{cases} 
\log_p |\image (\varkappa^c)|+1, & \text{if}\; \eth=\emptyset\;\text{and}\;k=K;\\ 
\log_p |\image (\varkappa^c)|,& \text{otherwise}.
\end{cases}
\end{equation}

Via \eqref{e:rest}, we identify $\coh^1(K,E_p^{(p)})$ with $\Hom(G_k, E_p^{(p)}(k))^\Phi$, so that the conductor of its element 
at each place $v$ is defined. Let $\X_o\subset \X$ be the complement of $S_{ss}$.

\begin{definition}\label{d:asela}
Define $\Sel_{\mathfrak a}(E_p^{(p)}/K)$ to be the subgroup of $\mathcal S(E_p^{(p)}/\X_o)$ consisting of elements having conductors not greater than
$a_v$ at each $v\in S_{ss}$. 
\end{definition}

\begin{lemma}\label{l:av} Assuming \eqref{e:assume}, we have  
$$\log_p|\Sel_{\mathfrak a}(E_p^{(p)}/K)|=\sum_{v\in S_{ss}} \rho_v\cdot \deg v+\tilde\varepsilon_{1}-\tilde\varepsilon_{2},
$$ 
where $\tilde\varepsilon_{1}=\tau\leq \hslash+1$,  $ \tilde\varepsilon_{2}=\log_p|\ker (\varsigma^c_{\mathfrak a})|\leq |\eth_0|\leq  |S_b|$.
\end{lemma}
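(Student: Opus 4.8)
The plan is to compute the order of $\Sel_{\mathfrak a}(E_p^{(p)}/K)$ by relating it, via global class field theory, to the module $\mathscr W_{\mathfrak a}$ and its $c$-eigenspace, then to extract the stated main term $\sum_{v\in S_{ss}}\rho_v\deg v$ together with the two correction terms $\tilde\varepsilon_1,\tilde\varepsilon_2$. First I would note that by exactly the same eigenspace argument as in \eqref{e:wc} (using \eqref{e:rest} and the local identification \eqref{e:restloc} together with Lemma \ref{l:sscond}, which shows that bounding the conductor at $v\in S_{ss}$ by $a_v$ is precisely the condition cut out by $M_w=(1+\pi_w^{a_v}\O_w)(\O_w^*)^p$), one gets an identification
$$\Sel_{\mathfrak a}(E_p^{(p)}/K)=\Hom(\mathscr W_{\mathfrak a},E_p^{(p)}(k))^\Phi=\Hom(\mathscr W_{\mathfrak a}^c,\Z/p\Z),$$
so that $\log_p|\Sel_{\mathfrak a}(E_p^{(p)}/K)|=\log_p|\mathscr W_{\mathfrak a}^c[p]|$.

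Next I would analyze $\mathscr W_{\mathfrak a}^c[p]$ through the two exact sequences already set up in the excerpt. From \eqref{e:uw},
$$\log_p|\mathscr W_{\mathfrak a}^c[p]|=\log_p|\bar U_{\mathfrak a}^c|+\log_p|\image(\varkappa^c)|,$$
and $\log_p|\bar U_{\mathfrak a}^c|=\log_p|U_{\mathfrak a}^c|-\log_p|\ker(\varsigma_{\mathfrak a}^c)|$ since $\bar U_{\mathfrak a}^c$ is the image of $\varsigma_{\mathfrak a}^c:U_{\mathfrak a}^c\to\mathscr W_{\mathfrak a}^c$. By the hypothesis \eqref{e:assume}, $\log_p|U_{\mathfrak a}^c|=\sum_{v\in S_{ss}}\rho_v\deg v$ (writing $\log_p q_v=\deg v\cdot\log_p q$, or directly since $q_v=q^{\deg v}$). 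Setting $\tilde\varepsilon_2:=\log_p|\ker(\varsigma_{\mathfrak a}^c)|$ and using \eqref{e:ethkeric} gives $\tilde\varepsilon_2\le|\eth_0|\le|S_b|$. It remains to handle the contribution $\log_p|\image(\varkappa^c)|$; here I would split according to the definition \eqref{e:tau} of $\tau$. In the generic case $\image(\varkappa^c)$ injects into $\gimel^c[p]$, whose $p$-rank is at most $\hslash$ by definition of $\hslash$, so $\tau=\log_p|\image(\varkappa^c)|\le\hslash$; in the exceptional case $\eth=\emptyset$ and $k=K$, one picks up an extra $+1$ (coming from the fact that the degree map forces a one-dimensional discrepancy between the full divisor class group and its degree-zero part, exactly as in the paragraph before Proposition \ref{p:gen} where $\mathfrak w_F^0\subset\mathfrak w_F$ has index accounted for by $\Z_p$), giving $\tau\le\hslash+1$. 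Thus $\tilde\varepsilon_1:=\tau$ satisfies the claimed bound, and assembling the pieces yields the asserted formula.

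The step I expect to be the main obstacle is the careful bookkeeping of the $c$-eigenspace operation $(-)^c$ through the non-canonical constructions, in particular verifying that taking $\Phi$-invariants commutes with the relevant exact sequences. Because $|\Phi|$ is prime to $p$, the functor $Y\mapsto Y^c$ (a direct summand projection on pro-$p$ $\Phi$-modules) is exact, so the exact sequences \eqref{e:uw} and the defining sequence for $\bar U_{\mathfrak a}^c$ remain exact after applying it; this is the technical point that makes the counting go through, and I would state it explicitly at the outset. A secondary subtlety is the $k=K$ versus $k\ne K$ dichotomy: when $k=K$ one has $\Phi$ trivial so $(-)^c$ is the identity, and the $\Z_p$-rank computation for $V$ (yielding $\max\{|\eth_0|-1,0\}$ rather than $|\eth_0|$) together with the $+1$ in $\tau$ must be tracked consistently — but all of this is already assembled in the preceding lemmas and in \eqref{e:ethkeric}, \eqref{e:tau}, so the proof is essentially a matter of chaining \eqref{e:wc}-style identification, \eqref{e:uw}, \eqref{e:ethkeric}, and \eqref{e:tau} together and reading off the inequalities.
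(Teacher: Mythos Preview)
Your approach is the same as the paper's, but there is a slip in the first step that propagates and leaves the $+1$ unexplained. From $\Sel_{\mathfrak a}(E_p^{(p)}/K)=\Hom(\mathscr W_{\mathfrak a}^c,\Z/p\Z)$ you conclude $\log_p|\Sel_{\mathfrak a}(E_p^{(p)}/K)|=\log_p|\mathscr W_{\mathfrak a}^c[p]|$. This is not right in general: for a finitely generated $\Z_p$-module $Y$ one has $|\Hom(Y,\Z/p\Z)|=|Y/pY|$, and $|Y/pY|=|Y[p]|$ only when $Y$ is finite. In the exceptional case $\eth=\emptyset$ and $k=K$, the character $c$ is trivial, $\gimel$ is the $p$-completion of the full divisor class group of $k$, and the degree map gives $\gimel$ (hence $\mathscr W_{\mathfrak a}^c=\mathscr W_{\mathfrak a}$) $\Z_p$-rank $1$; then $|\mathscr W_{\mathfrak a}^c/p\mathscr W_{\mathfrak a}^c|=p\cdot|\mathscr W_{\mathfrak a}^c[p]|$. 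This extra factor of $p$ is exactly the $+1$ in \eqref{e:tau}. The paper states this dichotomy explicitly: if $\eth\neq\emptyset$ or $k\neq K$ then $\mathscr W_{\mathfrak a}^c$ is finite, otherwise it has $\Z_p$-rank $1$.

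So your instinct that the $+1$ has to do with the degree map is correct, but you attach it to the wrong place: it does not appear inside $\image(\varkappa^c)\subset\gimel^c[p]$ (the $\Z_p$-part of $\gimel$ has trivial $p$-torsion, so contributes nothing there), it appears in the passage from $\mathscr W_{\mathfrak a}^c[p]$ to $\mathscr W_{\mathfrak a}^c/p\mathscr W_{\mathfrak a}^c$. Once you replace your first displayed equality by $\log_p|\Sel_{\mathfrak a}(E_p^{(p)}/K)|=\log_p|\mathscr W_{\mathfrak a}^c/p\mathscr W_{\mathfrak a}^c|$ and then invoke the finite/rank-$1$ dichotomy, the rest of your argument via \eqref{e:uw}, \eqref{e:ethkeric}, \eqref{e:assume} goes through and matches the paper.
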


\begin{proof} Similar to \eqref{e:wc},  $\Sel_{\mathfrak a}(E_p^{(p)}/K)=\Hom (\mathscr W^c_{\mathfrak a},\Z/p\Z)$.
We shall write $\mathscr W_{\mathfrak a}$ additively. 
If $\eth\not=\emptyset$ or $k\not=K$, then $\mathscr W^c_{\mathfrak a}$ is finite with 
$|\mathscr W^c_{\mathfrak a}/p\mathscr W^c_{\mathfrak a}|=|\mathscr W^c_{\mathfrak a}[p]|$; otherwise, $\mathscr W^c_{\mathfrak a}$ is finitely generated over $\Z_p$ of rank $1$, hence
$|\mathscr W^c_{\mathfrak a}/p\mathscr W^c_{\mathfrak a}|=p\cdot |\mathscr W^c_{\mathfrak a}[p]|$.
The lemma follows from \eqref{e:assume}, \eqref{e:ethkeric}, \eqref{e:uw} and \eqref{e:tau}.
\end{proof}

\begin{myproposition}\label{p:qvnv} We have 
$$
\log_p|\Sel(E_p^{(p)}/K)|=\frac{(p-1)\deg \Delta_{A/K}}{12} \cdot\log_p q    +\tilde\varepsilon_{1}-\tilde\varepsilon_{2},
$$ 
where $\tilde\varepsilon_{1}=\tau\leq \hslash+1$,  $ \tilde\varepsilon_{2}=\log_p|\ker (\varsigma^c_{\mathfrak o})|\leq |\eth_0|\leq  |S_b|$.  

\end{myproposition}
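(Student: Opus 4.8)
The plan is to read Proposition~\ref{p:qvnv} off from Lemma~\ref{l:av} applied to the special index $\mathfrak a=\mathfrak o$, the only substantial new ingredient being a comparison of the invariant--differential line bundles of $A$ and of $A^{(p)}$.

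\emph{Reduction to Lemma~\ref{l:av}.} First I would record that $\Sel(E_p^{(p)}/K)=\Sel_{\mathfrak o}(E_p^{(p)}/K)$. Indeed, by the definition of $\mathcal S(\,\cdot\,/\X)$ the group $\Sel(E_p^{(p)}/K)$ consists of the classes of $\coh^1(K,E_p^{(p)})$ lying locally in $\ker(j_v)$ at every place $v$; at a non-supersingular $v$ this is precisely the condition built into $\mathcal S(E_p^{(p)}/\X_o)$, while at a supersingular $v$ Lemma~\ref{l:sscond} describes $\ker(j_w)$ as the set of classes of conductor at most $\tfrac{pn_w}{p-1}$, which is the bound $a_v$ defining $\Sel_{\mathfrak o}(E_p^{(p)}/K)$ in Definition~\ref{d:asela}. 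To apply Lemma~\ref{l:av} to $\mathfrak o$ one checks hypothesis~\eqref{e:assume}: the groups $M_w$ entering the definition of $\mathscr W_{\mathfrak o}$ coincide at the supersingular places with those of \S\ref{su:gh1}, so $U_{\mathfrak o}^c=\bigoplus_{v\in S_{ss}}U_v^c$, and Lemma~\ref{l:uc} gives $|U_{\mathfrak o}^c|=\prod_{v\in S_{ss}}q_v^{n_v}$, i.e.\ \eqref{e:assume} holds with $\rho_v=n_v$. Lemma~\ref{l:av} then yields
\begin{equation*}
\log_p|\Sel(E_p^{(p)}/K)|=\sum_{v\in S_{ss}}n_v\cdot\log_p q_v+\tilde\varepsilon_1-\tilde\varepsilon_2,
\end{equation*}
with $\tilde\varepsilon_1=\tau\le\hslash+1$ and $\tilde\varepsilon_2=\log_p|\ker(\varsigma^c_{\mathfrak o})|\le|\eth_0|\le|S_b|$, which are exactly the error term and bounds in the statement. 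It remains to evaluate $\sum_{v\in S_{ss}}n_v\log_p q_v$.

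\emph{The discriminant computation.} The main point is the identity $\sum_{v\in S_{ss}}n_v\deg v=\frac{(p-1)\deg\Delta_{A/K}}{12}$, from which the desired value follows since $\log_p q_v=\deg v\cdot\log_p q$. I would prove it by comparing line bundles on $\X$. Let $\mathcal A$ and $\mathcal A^{(p)}$ be the N\'eron models over $\X$, and $\omega_{\mathcal A}$, $\omega_{\mathcal A^{(p)}}$ the associated line bundles of invariant differentials on $\X$. By the N\'eron mapping property $\mathsf V$ extends to $\mathcal V:\mathcal A^{(p)}\to\mathcal A$, which induces $\mathcal V^*:\omega_{\mathcal A}\to\omega_{\mathcal A^{(p)}}$. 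Since $A/K$ is ordinary, $\mathsf V$ is separable, so $\mathcal V^*$ is nonzero on the generic fibre; being a nonzero morphism of line bundles on the integral curve $\X$, it is injective with torsion cokernel $\mathcal Q$. Locally at $v$, the commutative diagram~\eqref{e:vf} identifies $\mathcal V^*$, restricted to the formal group, with multiplication by the linear coefficient $\mathscr P'(0)$ of the formal Verschiebung (the N\'eron differential being a unit multiple of the normalized invariant differential of $\mathscr F$), and $\ord_v\mathscr P'(0)$ is $0$ at an ordinary $v$ and $n_v$ at a supersingular $v$ by Lemma~\ref{l:ss} and \eqref{e:ord}. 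Hence $\mathcal Q\simeq\bigoplus_{v\in S_{ss}}\O_v/\pi_v^{n_v}$, so $\deg\mathcal Q=\sum_{v\in S_{ss}}n_v\deg v$. On the other hand $12\deg\omega_{\mathcal A}=\deg\Delta_{A/K}$ and $12\deg\omega_{\mathcal A^{(p)}}=\deg\Delta_{A^{(p)}/K}$ (see \cite[\S 2.2.1]{lltt16}), so by \eqref{e:del}
\begin{equation*}
\sum_{v\in S_{ss}}n_v\deg v=\deg\mathcal Q=\deg\omega_{\mathcal A^{(p)}}-\deg\omega_{\mathcal A}=\tfrac1{12}\bigl(\deg\Delta_{A^{(p)}/K}-\deg\Delta_{A/K}\bigr)=\frac{(p-1)\deg\Delta_{A/K}}{12}.
\end{equation*}
This is the phenomenon mentioned in the introduction: the discrepancy between the two minimal discriminants recorded in \eqref{e:del}, though invisible at each individual good place, is accounted for entirely by the degenerations of $\mathcal V^*$ at the supersingular places.

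Substituting into the first display gives the asserted formula, with $\tilde\varepsilon_1,\tilde\varepsilon_2$ and their bounds inherited verbatim from Lemma~\ref{l:av}. The step I expect to require the most care is the local identification $\coker(\mathcal V^*)_v\simeq\O_v/\pi_v^{n_v}$: one has to know that at \emph{every} place of $K$ the N\'eron differential restricts to a unit multiple of the formal--group invariant differential --- at a multiplicative $v$ this is the standard differential of the torus in the special fibre, which forces $n_v=0$ there --- and that a homomorphism of formal groups acts on invariant differentials through its derivative at the origin. Granting Lemma~\ref{l:ss}, both are routine, but it is their uniformity over all of $\X$ that makes the global degree count legitimate.
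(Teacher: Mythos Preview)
Your proposal is correct and follows essentially the same route as the paper: reduce to Lemma~\ref{l:av} via \eqref{e:wc} and Lemma~\ref{l:uc}, then prove the key identity \eqref{e:discqvnv} by computing the effect of $\mathsf V^*$ on N\'eron differentials locally (obtaining $\ord_v\mathscr P'(0)=n_v$ at supersingular $v$ and $0$ elsewhere via Lemma~\ref{l:ss} and \eqref{e:ord}) and then globalizing. The only cosmetic difference is that the paper carries out the global step adelically, summing $\ord_v(\mathsf V^*\omega_{0,v}/\omega_{0,v}^{(p)})\cdot\deg v$ and invoking formula~(8) of \cite{tan95} for $\tfrac{1}{12}\deg\Delta$, whereas you package the same computation as the degree of the cokernel of the sheaf map $\mathcal V^*:\omega_{\mathcal A}\to\omega_{\mathcal A^{(p)}}$ and cite \cite{lltt16} for $12\deg\omega_{\mathcal A}=\deg\Delta_{A/K}$; the underlying local input and the final subtraction using \eqref{e:del} are identical.
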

\begin{proof} 

In view of \eqref{e:wc},  Lemma \ref{l:uc}, and Lemma \ref{l:av},
we need to show that
\begin{equation}\label{e:discqvnv}
\sum_{v\in S_{ss}} n_v\cdot \deg v=\frac{(p-1)\deg \Delta_{A/K}}{12}.
\end{equation}

Let $\omega$ be an invariant differential of $A/K$ and for each place $v$ let $\omega_{0,v}$ and $\omega_{0,v}^{(p)}$ be
respectively local N$\acute{\text{e}}$ron differentials of $A$ and $A^{(p)}$.  By \cite[\S IV. Corollary 4.3]{sil86},
\begin{equation*}\label{e:sil86}
\ord_v (\mathrm{V}^*\omega_{0,v}/\omega^{(p)}_{0,v})=\ord_v \frac{d\mathscr{P}(t)}{dt}\mid_{_t=0},
\end{equation*}
which together with Lemma \ref{l:ss} and \eqref{e:ord} yield
\begin{equation}\label{e:sumnv}
\sum_{v\in S_{ss}}n_v\cdot\deg v=\sum_{\text{all}\; v} \ord_v (\mathrm{V}^*\omega_{0,v}/\omega^{(p)}_{0,v})\cdot\deg v.
\end{equation}
Now the formula (8) in \cite{tan95} implies
$$\frac{\deg \Delta_{A/K}}{12}=\sum_{\text{all}\;v} \ord_v (\frac{\omega}{\omega_{0,v}})\cdot \deg v=\sum_{\text{all}\;v} \ord_v (V^*(\frac{\omega}{\omega_{0,v}}))\cdot \deg v,
$$
and
$$\frac{p\cdot \deg \Delta_{A/K}}{12}=\frac{\deg \Delta_{A^{(p)}/K}}{12}=\sum_{\text{all}\;v} \ord_v (\frac{V^*\omega}{\omega^{(p)}_{0,v}})\cdot \deg v.
$$
These and \eqref{e:sumnv} lead to the desired equality.

\end{proof}

\subsubsection{Computing $\mathcal S(C_p/\U)$}\label{ss:comp}
Next, we investigate $\mathcal S(C_p/\U)$ for $\U=\X_{go}$, $\X_g$, or $\X$. Let $S_{ngo}$ be the complement of $\X_{go}$ in $\X$, $S'_{ngo}:=\U\cap S_{ngo}$, $S'_{ss}:=\U\cap S_{ss}$, and
$\dag$ the complement of $\U$ in $\X$. Write $\ddag$ for $\dag\cup \eth$.
We first treat the case in which $\X_{go}\not=\X$, or equivalently, $A/K$ is not {\em{isotrivial}}\footnote{Recall that $A/K$ is assumed to be
ordinary, having semi-stable reduction everywhere.}.
Put 
$$\bar W:=\prod_{w\in S_{ngo}(k)} k_w^*/(k_w^*)^p$$
and 
$$\bar{\mathscr W}:=k^*\backslash \A_k^* /(\prod_{w\in \X_{go}(k)} \O_w^*\times \prod_{w\in S_{ngo}(k)} (k^*)^p).$$
If $\ell_c$ and $\ell_{c^{-1}}$ denote the $c$ and $c^{-1}$ eigenspaces of the regular representation of $\Phi$ on $\F_p[\Phi]$, then 
$E_p^{(p)}(k)=\ell_c$. Hence
$$\prod_{w\in S_{ngo}(k)} \coh^1(k_w, E_p^{(p)})=\Hom (\bar W, \ell_c)=\Hom (\bar W\otimes_{\F_p} \ell_{c^{-1}},\Z/p\Z)=(\bar W\otimes_{\F_p} \ell_{c^{-1}})^\vee,$$
and
$$\mathcal S(E_p^{(p)}/\X_{go}\times \mathrm{Spec }\, k)=\Hom(\bar{\mathscr W},\ell_c)=((\bar{\mathscr W}^c/p \bar{\mathscr W})\otimes_{\F_p} \ell_{c^{-1}})^\vee.$$

In view of \eqref{e:locdual} and Lemma \ref{l:poitou},
$$\mathcal S(C_p/\X_{go}\times \mathrm{Spec }\, k)=\ker (\bar W\longrightarrow \bar{\mathscr W}/p\bar{\mathscr W}).$$
Therefore, by the Hochschild-Serre spectral sequence again,
$$\mathcal S(C_p/\X_{go})=\mathcal S(C_p/\X_{go}\times \mathrm{Spec }\, k)^\Phi=\ker (\bar W^c\longrightarrow \bar{\mathscr W}^c/p\bar{\mathscr W}^c).$$

For each $w$, put
$$\overline M_w:=M_w/M_w\cap (k_w^*)^p=M_w\cdot (k_w^*)^p/(k_w^*)^p
.$$  
Lemma \ref{l:perp} gives rise to the left block of the following commutative diagram
$$
\xymatrix{\coh^1(k_w,C_p) \ar[r]^-\sim & \coh^1(k_w,E_p^{(p)})^\vee \ar[r]^-\sim & \Hom(k_w^*/(k_w^*)^p,\Z/p\Z)^\vee 
\ar[r]^-\sim  & k_w^*/(k_w^*)^p\\
\ker (j_w') \ar[r]^-\sim \ar@{^(->}[u] & \ker(j_w)^\perp \ar[r]^-\sim \ar@{^(->}[u] & \Hom(k_w^*/M_w\cdot (k_w^*)^p,\Z/p\Z)^\perp 
\ar[r]^-\sim \ar@{^(->}[u]  & \overline M_w.\ar@{^(->}[u]}
$$
The middle block is obtained by taking the dual of the commutative diagram
\begin{equation}\label{e:homcond}
\xymatrix{\coh^1(k_w, E_p^{(p)}) \ar[r]^-\sim  &   \Hom(k_w^*/(k_w^*)^p, \Z_p/p\Z_p)\\  
\ker (j_w) \ar@{^{(}->}[u]  \ar[r]^-\sim &  
\Hom(k_w^*/(k_w^*)^p\cdot M_w, \Z_p/p\Z_p), \ar@{^{(}->}[u]}
\end{equation} 
which is due to Lemma \ref{l:sscond} together with the local class field theory (see \eqref{e:h1hom} and
\eqref{e:sscond})
while the right block is a direct consequence of duality.

shows that if $\bar {\mathscr M}=\prod_{w\in \dag(k)} k_w^*/(k_w^*)^p\times \prod_{w\in S'_{ngo}(k)} \overline M_w$, then 
\begin{equation}\label{e:scp}
\mathcal S(C_p/\U)=\ker (\bar {\mathscr M}^c\longrightarrow \bar{\mathscr W}^c/p\bar{\mathscr W}^c).
\end{equation}

To proceed further, we introduce 
$$\tilde {\mathscr M}=\prod_{w\in \dag(k)} k_w^*/(\O_p^*)^p \times \prod_{w\in S'_{ngo}(k)} M_w\cdot (\O_p^*)^p/(\O_p^*)^p$$
and 
$$\tilde{\mathscr W};=k^*\backslash \A_k^* /(\prod_{w\in \X_{go}(k)} \O_w^*\times \prod_{w\in S_{ngo}(k)} (\O_w^*)^p).$$
Then $\bar{\mathscr M}=\tilde{\mathscr M}/p\tilde{\mathscr M}$, and since the kernel of 
$\xymatrix{\tilde{\mathscr W}\ar@{->>}[r] & \bar{\mathscr W}}$ is inside $p\tilde{\mathscr W}$, we also have $\bar{\mathscr W}/p\bar{\mathscr W}=\tilde{\mathscr W}/p\tilde{\mathscr W}$. Hence \eqref{e:scp} implies
\begin{equation}\label{e:scp2}
\mathcal S(C_p/\U)=\ker (\tilde {\mathscr M}^c/p\tilde {\mathscr M}^c\longrightarrow \tilde{\mathscr W}^c/p\tilde{\mathscr W}^c).
\end{equation}

Let $\mathscr V$ and $\mathscr M$ denote the kernel and image
of the natural map $\tilde\varsigma:\tilde {\mathscr M} \longrightarrow \tilde{\mathscr W}$.
Let $\tilde \xi\in \mathscr V$ and  let $\xi$ be a lift of $\tilde \xi$ to  $\prod_{w\in \dag(k)} k_w^* \times \prod_{w\in S'_{ngo}(k)} M_w\cdot (\O_p^*)^p$. 
Then
there are $\alpha\in k^*$ and $\theta\in \prod_{w\in \X_{go}(k)} \O_w^*\times \prod_{w\in S_{ngo}(k)} (\O_w^*)^p$
such that
\begin{equation}\label{e:xialphatheta}
\xi=\alpha\cdot \theta.
\end{equation}
Let $V_{\ddag}$ denote the group of $\ddag(k)$-units of $k$. The equality \eqref{e:xialphatheta} implies that $\alpha$ is in the subgroup $V'_{\ddag}\subset V_\ddag$ consisting of those elements which are inside $M_w\cdot (\O_w^*)^p$, for all $w\in S'_{ss}(k)$.
Suppose there is another expression $\xi=\alpha'\cdot\theta'$. Then $\alpha'\alpha^{-1}$ actually belongs to the group $\F_k^*$ of global units.
Since $S_{ngo}=\dag \sqcup S'_{ngo}$, the correspondence $\tilde\xi\leftrightarrow \alpha\pmod{\F_k^*}$ gives rise to an isomorphism
$$\mathscr V \simeq V_{\ddag}'/\F_k^*.$$ 
The exact sequence $\xymatrix{0\ar[r] & \mathscr V \ar[r] & \tilde{\mathscr M}\ar[r] & \mathscr M\ar[r] & 0}$ induces the exact sequence
$$\xymatrix{\tilde{\mathscr M}[p] \ar[r] &\mathscr M[p]\ar[r]^-\partial & \mathscr V/p \mathscr V \ar[r] & \tilde{\mathscr M}/p \tilde{\mathscr M} \ar[r] & \mathscr M/p\mathscr M \ar[r] & 0}.$$

For an $h\in \mathscr M[p]$, let $\tilde \eta$ be one of its preimage in $\tilde{\mathscr M}$ and let 
$\eta$ be a lift of $\tilde \eta$ to $\prod_{w\in \dag(k)} k_w^* \times \prod_{w\in S'_{ngo}(k)} M_w\cdot (\O_p^*)^p$. 
Put $\xi=\eta^p$, so that \eqref{e:xialphatheta} holds for some $\alpha$ and $\theta$, and $\partial(h)$ is represented by $\alpha$. 
In this case, since $\xi$ is a $p$'th power, $\alpha\in (k_w^*)^p$ at all $w\in S_{ngo}(k)$, which is non-empty,
so by the local Leopoldt's conjecture \cite{kis93}, $\alpha=\beta^p$, for some $\beta\in V_\ddag$. Since $pV_\ddag\subset V'_\ddag$, we conclude that
\begin{equation}\label{e:tildemm}
\ker( \tilde{\mathscr M}^c/p \tilde{\mathscr M}^c\longrightarrow  \mathscr M^c/p\mathscr M^c)=(V'_\ddag/pV_\ddag)^c.
\end{equation}

Denote 
$$\beth:=\tilde{\mathscr W}/\mathscr M=k^*\backslash \A_k^* /(\prod_{w\in \X_{go}(k)} \O_w^*\times \prod_{w\in \dag(k)} k_w^* \times \prod_{w\in S'_{ngo}(k)} M_w\cdot (\O_p^*)^p.
$$
Then we have the exact sequence
\begin{equation}\label{e:bethmm}
\xymatrix{\tilde{\mathscr W}[p] \ar[r] & \beth[p] \ar[r] & \mathscr M/p\mathscr M \ar[r] & \tilde{\mathscr W}/p\tilde{\mathscr W}}.
\end{equation}
If $y\in \tilde{\mathscr W}[p]$ is represented by an idele $\zeta=(\zeta_w)_w$, then there are $\alpha\in k^*$ and
$\theta\in \prod_{w\in \X_{go}(k)} \O^*_w\cdot \prod_{w\in S_{ngo}(k)} (\O_w^*)^p$  such that
$$\zeta^p= \alpha\cdot \theta.$$
Then at $w\in S_{ngo}(k)$, $\alpha\in (k_w^*)^p$, so by the local Leopoldt's conjecture
again, $\alpha=\beta^p$, $\beta\in k^*$. Since $y$ is also represented by $\zeta\cdot\beta^{-1}$, we have the isomorphism
\begin{equation*}
\xymatrix{\prod_{w\in S_{ngo}(k)}\O_w^*/(\O_w^*)^p \ar[r]^-{\sim} & \tilde{\mathscr W}[p].}
\end{equation*}
Since the $\gimel$ equals the cokernel of $\prod_{w\in S_{ngo}(k)}\O_w^*/(\O_w^*)^p\longrightarrow \beth$, the above isomorphism and \eqref{e:bethmm} together imply
\begin{equation}\label{e:gimelker}
\ker (\mathscr M^c/p\mathscr M^c \longrightarrow \tilde{\mathscr W}^c/p\tilde{\mathscr W}^c)=\image (\beth[p]^c\longrightarrow \gimel[p]^c)\subset \gimel]p]^c
\end{equation}

We have $\hslash=\gimel[p]$ and
$$\log_p|({V'_\ddag}/p\cdot V_\ddag)^c|\leq \log_p|\Z_p\otimes_\Z V_\ddag^c/p\cdot \Z_p\otimes_\Z V_\ddag^c|=\begin{cases}
0, & \;\text{if}\; \ddag=\emptyset;\\
|\ddag_0-1|, & \;\text{if}\; \ddag\not=\emptyset\;\text{and} \;k=K;\\
|\ddag_0|, & \;\text{otherwise},
\end{cases}$$
so by \eqref{e:tildemm} and \eqref{e:gimelker},
\begin{equation}\label{e:su}
\log_p|\mathcal S(C_p/\U)|\leq \log_p|({V'_\ddag}/p\cdot V_\ddag)^c  |+\log_p|\gimel^c [p] |\leq |\ddag_0|+\hslash.
\end{equation}

Suppose $A/K$ is isotrivial. Then $S_b=S_{ss}=\emptyset$. 
If $k\not=K$, choose a place $v$ not spitting completely over $k/K$; otherwise, choose any $v$.
Set $\natural:=\{v\}$. Let $\mathsf U$ be the complement of $\natural$. Put 
$$\bar W:=\prod_{w\in \natural (k)} k_w^*/(k^*)^p$$
and denote 
$$\bar{\mathscr W}:=k^*\backslash \A_k^* /(\prod_{w\in \mathsf U (k)} \O^*_w\cdot \prod_{w \in \natural (k)} (k^*)^p),$$
so that $ \coh^1(K_v, E_p^{(p)})=\Hom (\bar W^c,\Z/p\Z)$ and
$\mathcal S(E_p^{(p)}/\mathsf U)=\Hom(\bar{\mathscr W}^c/p \bar{\mathscr W}^c,\Z/p\Z)$. 
Thus, in view of \eqref{e:locdual} and Lemma \ref{l:poitou}, if $\bar{\mathscr M}:=\prod_{w\in\natural (k)}\O_w^*/(\O_w^*)^p$, then
$$\mathcal S(C_p/\X)=\ker (\bar{\mathscr M}^c\longrightarrow \bar{\mathscr W}^c/p \bar{\mathscr W}^c).$$

The kernel of $\bar{\mathscr M} \longrightarrow \bar{\mathscr W}$ is in the image of $V_\natural$, the $\natural(k)$-units of $k$. 
Because of our choice, $(\Z_p\otimes_{\Z} V_\natural)^c=0$, and hence
$\bar{\mathscr M}^c \longrightarrow \bar{\mathscr W}^c$ is injective.

Similar to the previous case, the local Leopoldt's conjecture at $w\in\natural(k)$ implies 
$$\xymatrix{\prod_{w\in \natural(k)}k_w^*/(k_w^*)^p \ar@{->>}[r] & \bar{\mathscr W}[p].}$$
Since $(\prod_{w\in \natural(k)}k_w^*/(k_w^*)^p)^c=0$, we have $ \bar{\mathscr W}[p]=0$. 
Now, $(\bar{\mathscr W}/ \bar{\mathscr M})[p]=\gimel[p]$, where $\gimel$ is the divisor class group of $k$.
It follows from the exact sequence

$$\xymatrix{\bar{\mathscr W}^c [p] \ar[r] & \gimel^c [p] \ar[r] &  \bar{\mathscr M}^c  \ar[r] &
\bar{\mathscr W}^c /p\bar{\mathscr W}^c }$$
that
$$|\mathcal S(C_p/\X)|=|\gimel^c [p] |.$$

Thus, the following proposition is proved.
\begin{myproposition}\label{p:selcp} For $\U=\X_{go}$, $\X_g$, or $\X$,
$$\log_p |\mathcal S(C_p/\U)|\leq |\ddag_0|+\hslash.  $$
In particular,
$$\log_p |\Sel(C_p/K)|\leq |\eth_0|+\hslash.$$ 
\end{myproposition}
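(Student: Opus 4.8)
The plan is to carry out the idelic computation of \S\ref{ss:comp}, treating separately the non-isotrivial case ($\X_{go}\neq\X$) and the isotrivial case, and then to read off the ``in particular'' clause as the special case $\U=\X$. \emph{Non-isotrivial case.} First I would descend to $k$: the injectivity in \eqref{e:locdual} together with Lemma \ref{l:poitou} identifies $\mathcal S(C_p/\X_{go}\times\spec k)$ with $\ker(\bar W\to\bar{\mathscr W}/p\bar{\mathscr W})$, and taking $\Phi$-invariants (legitimate since $p\nmid|\Phi|$, via Hochschild--Serre) gives $\mathcal S(C_p/\X_{go})=\ker(\bar W^c\to\bar{\mathscr W}^c/p\bar{\mathscr W}^c)$. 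To pass to a general $\U$ and to impose the correct local conditions at the places of $S'_{ngo}$ and at the places outside $\U$, I would invoke Lemma \ref{l:perp}, i.e.\ $\ker(j'_w)=\ker(j_w)^\perp$, together with Lemma \ref{l:sscond} and local class field theory: dualizing diagram \eqref{e:homcond} replaces each local factor by $\overline M_w$, which is \eqref{e:scp}, and then the passage to the ``tilde'' groups $\tilde{\mathscr M},\tilde{\mathscr W}$ --- which differ from $\bar{\mathscr M},\bar{\mathscr W}$ only by $p$-divisible noise --- turns this into the clean identity \eqref{e:scp2}.

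Next I would analyze the natural map $\tilde\varsigma\colon\tilde{\mathscr M}\to\tilde{\mathscr W}$, with kernel $\mathscr V$ and image $\mathscr M$. Writing a kernel element as $\xi=\alpha\theta$ as in \eqref{e:xialphatheta} identifies $\mathscr V$ with $V'_\ddag/\F_k^*$, where $V'_\ddag\subseteq V_\ddag$ is the group of $\ddag(k)$-units lying in $M_w\cdot(\O_w^*)^p$ for every $w\in S'_{ss}(k)$. The two exact sequences deduced from $0\to\mathscr V\to\tilde{\mathscr M}\to\mathscr M\to 0$ and $0\to\mathscr M\to\tilde{\mathscr W}\to\beth\to 0$ then split $\mathcal S(C_p/\U)$ into a ``unit'' part $(V'_\ddag/pV_\ddag)^c$, namely \eqref{e:tildemm}, and a ``class group'' part sitting inside $\gimel^c[p]$, namely \eqref{e:gimelker}. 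The decisive input is the local Leopoldt conjecture \cite{kis93}, applied at the (non-empty) set of places $S_{ngo}(k)$: anything in $V_\ddag$ or in $k^*$ which is locally a $p$-th power at some $w\in S_{ngo}(k)$ is globally a $p$-th power, and this is precisely what annihilates the unwanted terms in \eqref{e:bethmm} and feeds \eqref{e:tildemm}. Bounding the unit part by the $\Z_p$-rank of $V_\ddag^c$ (which is $0$, $|\ddag_0|-1$, or $|\ddag_0|$ according to whether $\ddag=\emptyset$ and whether $k=K$) and the class-group part by $\hslash=\dim_{\F_p}\gimel[p]$ yields \eqref{e:su}, i.e.\ $\log_p|\mathcal S(C_p/\U)|\leq|\ddag_0|+\hslash$.

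\emph{Isotrivial case.} Now $S_b=S_{ss}=\emptyset$, so $\X_{go}=\X$ and only $\U=\X$ is at stake. I would fix a single auxiliary place $v$ --- chosen so as not to split completely in $k/K$ when $k\neq K$ --- set $\natural=\{v\}$, and rerun the same computation: the point of the choice is that $(\Z_p\otimes_\Z V_\natural)^c=0$, which forces $\bar{\mathscr M}^c\to\bar{\mathscr W}^c$ to be injective, while the local Leopoldt conjecture at the places over $v$ gives $\bar{\mathscr W}[p]=0$; the resulting exact sequence then produces $|\mathcal S(C_p/\X)|=|\gimel^c[p]|$, again $\leq p^{\hslash}$. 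Finally, the ``in particular'' statement is the case $\U=\X$ of the general bound: there $\dag=\emptyset$, $\ddag=\eth$, $\ddag_0=\eth_0$, and $\mathcal S(C_p/\X)=\Sel(C_p/K)$, so the bound reads $\log_p|\Sel(C_p/K)|\leq|\eth_0|+\hslash$.

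The step I expect to be the main obstacle is the bookkeeping around $\tilde\varsigma$: correctly identifying $\mathscr V$ with $V'_\ddag/\F_k^*$ and extracting \eqref{e:tildemm} and \eqref{e:gimelker} from the two exact sequences demands care with exactly which local units are $p$-th powers, with the nesting $pV_\ddag\subseteq V'_\ddag\subseteq V_\ddag$, and, throughout, with the $c$- versus $c^{-1}$-eigenspace conventions under $\Phi$ forced by the equality $E_p^{(p)}(k)=\ell_c$.
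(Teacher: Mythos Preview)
Your proposal is correct and follows essentially the same route as the paper: the argument in \S\ref{ss:comp} is exactly the idelic computation you outline---descent to $k$ via Hochschild--Serre and Lemma~\ref{l:poitou}, passage from $\bar{\mathscr M},\bar{\mathscr W}$ to $\tilde{\mathscr M},\tilde{\mathscr W}$, identification $\mathscr V\simeq V'_\ddag/\F_k^*$, the two applications of local Leopoldt yielding \eqref{e:tildemm} and \eqref{e:gimelker}, and the separate isotrivial computation with an auxiliary place~$\natural$. The paper's ``proof'' of the proposition is simply the sentence ``Thus, the following proposition is proved,'' summarizing that computation, so your plan matches it step for step.
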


\begin{proof}[Proof of Proposition \ref{p:sel}] The proposition is a consequence of Proposition \ref{p:qvnv} and
Proposition \ref{p:selcp}, because we have the commutative diagram of exact sequences
$$\xymatrix{ 0 \ar[r] \ar@{=}[d]& \Sel(E_p^{(p)}/K) \ar[r] \ar@{^{(}->}[d] & \Sel_p(A^{(p)}/K) \ar[r] \ar@{^{(}->}[d] & 
\Sel(C_p/K) \ar@{^{(}->}[d] \\
C_p(K) \ar[r]  & \coh^1(K,E_p^{(p)}) \ar[r] \ar[d]& \coh^1(K,A_p^{(p)}) \ar[r]^-{\mathsf V_*} \ar[d] & \coh^1(K,C_p)\ar[d]  \\
 &  \bigoplus_v\coh^1(K_v ,A^{(p)}) \ar@{=}[r]& \bigoplus_v\coh^1(K_v ,A^{(p)}) \ar[r]^-{\mathsf V_*} & \bigoplus_v \coh^1(K_v,A),}$$ 
where the middle long exact sequence is induced from \eqref{e:ve}.
\end{proof}

\subsection{The Cassels-Tate duality}\label{su:p2} The Cassels-Tate pairing induces the perfect pairing (see \cite[III.9.5]{mil86})
$$\langle - , - \rangle_{A/K}:\overline\Sha(A/K)\times \overline\Sha(A/K)\longrightarrow \Q_p/\Z_p.$$
If $\varphi:A\longrightarrow B$ is an isogeny with dual isogeny $\varphi^t$, then the commutative diagram

$$\xymatrixcolsep{5pc}\xymatrix{\overline\Sha(A/K)\times \overline\Sha(A/K) \ar@<-5ex>[d]_{\varphi_{_\natural}} \ar[r]^-{\langle - , - \rangle_{A/K}} & \Q_p/\Z_p\ar@{=}[d]\\
\overline\Sha(B/K)\times  \overline\Sha(B/K) \ar@<-5ex>[u]_{\varphi^t_{_\natural}} \ar[r]^-{\langle - , - \rangle_{B/K}} & \Q_p/\Z_p}
$$
yields the duality between $\overline\Sha(A/K)/\ker (\varphi_{_\natural})$ and $\overline\Sha(B/K)/\ker (\varphi_{_\natural}^t)$. In particular
\begin{equation}\label{e:dualsha}
|(\overline\Sha(A/K)/\ker (\varphi_{_\natural}))[p^\nu]|=
|(\overline\Sha(B/K)/\ker (\varphi^t_{_\natural}))[p^\nu]|,
\end{equation}
since $|G/p^\nu G|=|G[p^\nu]|$ for a finite abelian group $G$. 

\begin{proof}[Proof of Proposition \ref{p:aap}] 
Consider the exact sequence
$$\xymatrix{0\ar[r] & \ker(\mathsf{F}_{_\natural}) \ar[r] & \overline\Sha(A/K)[p^\nu] \ar[r] & 
(\overline\Sha(A/K)/\ker(\mathsf{F}_{_\natural}))[p^\nu] \ar[ld]_{\cdot p^\nu} \\
&  & \ker(\mathsf{F}_{_\natural})\cap p^\nu \overline\Sha(A/K) \ar[r]  & 0,}
$$
where the morphism $\cdot p^\nu$ is induced from the multiplication by $p^\nu$. This implies
\begin{equation}\label{e:shacp}
\log_p|\overline\Sha(A/K)[p^\nu]|=\log_p |(\overline\Sha(A/K)/\ker (\mathsf F_{_\natural}))[p^\nu]|+\delta_1,
\end{equation}
with $ \log_p |\Sel(C_p/K)|\geq  \log_p| \ker(\mathsf{F}_{_\natural}) |\geq \delta_1\geq 0$. Also, consider the exact sequence
$$\xymatrix{0\ar[r] & \ker(\mathsf{V}_{_\natural}) \ar[r] & (\overline\Sha(A^{(p)}/K)/\ker(\mathsf{V}_{_\natural}))[p^\nu] \ar[ld] & \\ 
 & (\overline\Sha(A^{(p)}/K)/\overline\Sha(A^{(p)}/K)[p])[p^\nu] \ar[r]^-{\cdot p^\nu} 
 & \mathsf T \ar[r] & 0,}$$
where $\mathsf T=(\overline\Sha(A^{(p)}/K)[p]/\ker(\mathsf{V}_{_\natural})) \cap p^\nu (\overline\Sha(A^{(p)}/K)/\ker(\mathsf{V}_{_\natural}))
$.
This together with \eqref{e:dualsha} and  \eqref{e:shacp} lead to
\begin{equation}\label{e:shacpp}
\log_p|\overline\Sha(A/K)[p^\nu]|=
\log_p |(\overline\Sha(A^{(p)}/K)/\overline\Sha(A^{(p)}/K)[p])[p^\nu]|+\delta_1+\delta_2,
\end{equation}
with 
$$ \log_p |\Sel(C_p/K)|\geq  \log_p| \ker(\mathsf{F}_{_\natural}) |\geq \log_p|\mathsf{V}_{_\natural}( \overline\Sha(A^{(p)}/K)[p]) |\geq\delta_2\geq 0.$$
Now 
$$(\overline\Sha(A^{(p)}/K)/\overline\Sha(A^{(p)}/K)[p])[p^\nu]=\overline\Sha(A^{(p)}/K)[p^{\nu+1}]/\overline\Sha(A^{(p)}/K)[p].$$
Recall that $r$ denote the co-rank of 
$\Sel_{div}(A/K)\simeq \Sel_{div}(A^{(p)}/K),$
so 
$$\log_p|\overline\Sha(A/K)[p^\nu]|=\log_p|\Sel_{p^\nu}(A/K)|-r\nu,$$
and a similar formula for $ A^{(p)}$.
Therefore,
$$\log_p|\Sel_{p^\nu}(A/K)|=\log_p|\Sel_{p^{\nu+1}}(A^{(p)}/K)|-\log_p|\Sel_{p}(A^{(p)}/K)|+\delta_1+\delta_2.$$
Then we apply Proposition \ref{p:sel} and Proposition \ref{p:selcp}.

\end{proof}




\subsection{The method of specialization}\label{su:spec} 
In this section only, we assume that $A/K$ an {\em{ordinary abelian variety}} defined over a {\em{global field}}. 
As before, let $L/K$ be a
$\Z_p^d$-extension unramified outside a finite set of places of $K$. Let $\Gamma$ be the Galois group, $\Lambda_\Gamma$
the Iwasawa algebra. 

Endow $\Bmu_{p^\infty}$ with the discrete topology and
write $\hat \Gamma$ for the group of all continuous characters $\Gamma\longrightarrow \Bmu_{p^\infty}$. Let
$\O$ be the ring of integers of $\Q_p(\Bmu_{p^\infty})$. 
Every character $\chi\in\hat\Gamma$ extends to a unique continuous $\Z_p$-algebra homomorphism
$\chi:\Lambda_\Gamma\longrightarrow \O$.

For $g\in\Lambda_\Gamma$, not divisible by $p$, by Monsky \cite[Theorem 2.3]{monsky}, the set 
$$\nabla_g:=\{\chi\in\hat\Gamma\;\mid\; \chi(g)\in p\cdot\O\}$$ 
is either $\emptyset$
or is contained in $T_1\cup \cdots\cup T_n$, where for each $i$, there are $\zeta_{i,1},...,\zeta_{i,\nu_i}\in\Bmu_{p^\infty}$, $\nu_i>0$, 
and
$\sigma_{i,1},...,\sigma_{i,\nu_i}\in\Gamma$, extendable to a $\Z_p$-basis of $\Gamma$, such that
$$T_i:=\{\chi\in\hat\Gamma\;\mid\; \chi(\sigma_{i,j})=\zeta_{i,j},\;\text{for}\;j=1,...,\nu_i\}.$$ 
For an element $\psi\in\Gamma$, extendable to a $\Z_p$-basis of $\Gamma$, set
$$T_\psi:=\{\chi\in\hat\Gamma\;\mid\; \chi(\psi)=1\}.$$
Then $T_\psi\subset T_i$ can not hold, unless $\nu_i=1$, $\zeta_{i,1}=1$, and $\sigma_{i,1}$, $\psi$ topologically generate 
the same closed subgroup of $\Gamma$, in such case, we actually have $T_\psi=T_i$. Therefore, there are finitely many rank one 
$\Z_p$-submodules of $\Gamma$ such that if $\psi$ is chosen away from them, then
\begin{equation}\label{e:nabla}
T_\psi\subsetneq \nabla_g.
\end{equation}

For a $\Z_p^e$-subextension $L'/K$ of $L/K$. Denote 
$\Gamma'=\Gal(L'/K)$. The quotient map $\Gamma\longrightarrow\Gamma'  $ extends uniquely to a continuous $\Z_p$-algebra
homomorphism
$$p_{L/L'}:\Lambda_\Gamma\longrightarrow\Lambda_{\Gamma'}.$$
Write $\Psi:=\Gal(L/L')$ so that $\Gamma'=\Gamma/\Psi$. Put $\mathscr I_\Psi:=\ker(p_{L/L'})$.



Let $\mathfrak b$ be a finite set of places of $K$. For a subextension $M$ of $L/K$, let
$\mathfrak b_{M}\subset \mathfrak b$ denote the subset consisting of places splitting completely over $M$.

\begin{lemma}\label{l:monsky} Suppose $d\geq 2$. Let $Z_\ell$, $\ell=1,..,s$, be finitely generated torsion $\Lambda_\Gamma$-modules, 
$\theta$ an element in $\Lambda_\Gamma$, not divisible by $p$, and $\mathfrak b$ a finite set of places of $K$.
There is an element $\psi\in\Gamma$ extendable to a $\Z_p$-basis such that if $L'$ is the fixed field of $\psi$, then
$\mathfrak b_{L'}=\mathfrak b_L$, $p_{L/L'}(\theta)$ is not divisible by $p$,
and each $\Lambda_{\Gamma'}$-module $Z'_\ell:=Z_\ell/\mathscr I_\Psi Z_\ell$ is torsion, having the same elementary $\mu$-invariants 
 as those of $Z_\ell$ over $\Lambda_\Gamma$.
\end{lemma}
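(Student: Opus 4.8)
The plan is to build the desired $\psi$ by intersecting finitely many ``avoidance'' conditions, each of which excludes only finitely many rank-one $\Z_p$-submodules of $\Gamma$, and then to invoke Monsky's theorem \cite[Theorem 2.3]{monsky} in the form recorded around \eqref{e:nabla} to ensure the specialization behaves well. First I would address the condition $\mathfrak b_{L'}=\mathfrak b_L$: a place $v\in\mathfrak b$ determines a closed decomposition subgroup $\Gamma_v\subset\Gamma$, and $v$ splits completely over $L'$ exactly when $\Gamma_v\subset\Psi=\overline{\langle\psi\rangle}$. For $v\in\mathfrak b_L$ we have $\Gamma_v=0$, so there is nothing to impose; for $v\in\mathfrak b\setminus\mathfrak b_L$, $\Gamma_v\neq 0$, and since $\Psi$ has $\Z_p$-rank one while $\Gamma_v$ is nonzero and $v$ does not split completely over $L$, we need only require that $\psi$ not generate a subgroup containing any such $\Gamma_v$; because $d\geq 2$, the submodules $\psi$ must avoid here form a finite collection of rank-one (or rank-zero) $\Z_p$-submodules. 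Thus $\mathfrak b_{L'}=\mathfrak b_L$ holds for $\psi$ outside finitely many rank-one submodules.

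Next I would handle the non-$p$-divisibility of the specialized elements. By the uniqueness of the $p$-part and non-$p$-part in \eqref{e:iwasawa}, each torsion $Z_\ell$ has a well-defined elementary $\mu$-invariant datum, carried by a submodule isomorphic to $\bigoplus_i\Lambda_\Gamma/(p^{\alpha_{\ell,i}})$; set $g_\ell:=\prod_i p^{\alpha_{\ell,i}}$ times a generator of the characteristic ideal of the non-$p$-part of $Z_\ell$ — more precisely, I would work with the non-$p$ factor and with $\theta$. The point is that for any fixed $h\in\Lambda_\Gamma$ not divisible by $p$, the set $\nabla_h=\{\chi\in\hat\Gamma : \chi(h)\in p\O\}$ is, by Monsky, contained in a finite union of the special sets $T_i$, and by the discussion culminating in \eqref{e:nabla} there are only finitely many rank-one $\Z_p$-submodules of $\Gamma$ such that, choosing $\psi$ away from them, $T_\psi\subsetneq\nabla_h$ fails; equivalently, for $\psi$ outside that finite set, $p_{L/L'}(h)$ is not divisible by $p$ (one identifies $\hat{\Gamma'}$ with $T_\psi\subset\hat\Gamma$ and notes $p\mid p_{L/L'}(h)$ iff $\chi(h)\in p\O$ for all $\chi\in T_\psi$). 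Applying this with $h=\theta$ and with $h$ ranging over generators of the non-$p$ parts of the $Z_\ell$ (and over $\prod$ of the finitely many factors, so a single $h$ suffices), and intersecting the resulting finitely many avoidance conditions with the one from the previous paragraph, we obtain an open dense set of admissible $\psi$; fix one, extendable to a $\Z_p$-basis, and let $L'$ be its fixed field. This already gives $\mathfrak b_{L'}=\mathfrak b_L$ and $p\nmid p_{L/L'}(\theta)$.

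It remains to see that each $Z'_\ell:=Z_\ell/\mathscr I_\Psi Z_\ell$ is torsion over $\Lambda_{\Gamma'}$ with the same elementary $\mu$-invariants as $Z_\ell$. Applying $-\otimes_{\Lambda_\Gamma}\Lambda_{\Gamma'}$ to the defining sequence \eqref{e:iwasawa} of $Z_\ell$ and using that $N$ is pseudo-null (so, after possibly one further finite avoidance to kill the contribution of the finitely many height-two primes supporting $N$, its specialization is torsion), I would reduce to computing $\bigl(\Lambda_\Gamma/(p^{\alpha})\bigr)/\mathscr I_\Psi=\Lambda_{\Gamma'}/(p^\alpha)$, which manifestly contributes the same $p^\alpha$ to the $\mu$-invariants, and $\bigl(\Lambda_\Gamma/(\eta^\beta)\bigr)/\mathscr I_\Psi=\Lambda_{\Gamma'}/(p_{L/L'}(\eta)^\beta)$, which is $\Lambda_{\Gamma'}$-torsion and $p$-torsion-free precisely because $p\nmid p_{L/L'}(\eta)$ — this is where the Monsky avoidance for the non-$p$ generators is used. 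Keeping track of the $\Tor_1$ terms (which are $p$-power torsion killed by the relevant $p^{\alpha_i}$ and contribute no new $\mu$-invariant, only possibly altering $N'$, still pseudo-null) one concludes $Z'_\ell$ is torsion with $p$-part $\bigoplus_i\Lambda_{\Gamma'}/(p^{\alpha_{\ell,i}})$, as desired. The main obstacle I anticipate is precisely this last bookkeeping: ensuring that specialization does not accidentally merge the $p$-part with the non-$p$-part or with $N$ — i.e. that $p_{L/L'}(\eta_j)$ stays prime to $p$ and that the pseudo-null $N$ and the $\Tor$ contributions do not inflate the $\mu$-rank — and this is exactly what the finitely-many-submodules-to-avoid formulation of Monsky's theorem, together with $d\geq 2$, is designed to guarantee.
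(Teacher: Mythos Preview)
Your overall strategy matches the paper's: collect finitely many rank-one $\Z_p$-submodules of $\Gamma$ to avoid (coming from the decomposition subgroups $\Gamma_v$ for $v\in\mathfrak b\setminus\mathfrak b_L$, and from Monsky's theorem applied to $\theta$ and to the non-$p$ generators $\eta_{\ell,j}$), pick $\psi$ off their union, and then analyze $Z'_\ell$ via the long exact sequence obtained by applying $-\otimes_{\Lambda_\Gamma}\Lambda_{\Gamma'}$ (equivalently, the snake lemma for multiplication by $\psi-1$) to the structure sequence \eqref{e:iwasawa}. That is precisely what the paper does.

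There is, however, one genuine gap in your final bookkeeping. You assert that the $\Tor_1$ contribution is ``$p$-power torsion killed by the relevant $p^{\alpha_i}$'' and that the quotient ``$N'$'' remains pseudo-null. Neither is automatic: the relevant $\Tor_1$ is $N_\ell[\psi-1]$, a submodule of the pseudo-null $N_\ell$, and it need not be $p$-power torsion; likewise $N_\ell/(\psi-1)N_\ell$ need not be pseudo-null over $\Lambda_{\Gamma'}$ (for instance $N=\Lambda_\Gamma/(p,\psi-1)$ is pseudo-null over $\Lambda_\Gamma$, but $N/(\psi-1)N=\Lambda_{\Gamma'}/(p)$ contributes a $\mu$-invariant). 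The paper closes this gap by choosing, for each $\ell$, an annihilator $h_\ell\in\Lambda_\Gamma$ of $N_\ell$ with $p\nmid h_\ell$ (possible since $N_\ell$ is pseudo-null) and including the $h_\ell$ in the single product $g=\theta\cdot\prod_\ell h_\ell\cdot\prod_j\eta_{\ell,j}$ to which Monsky is applied. Then $p_{L/L'}(h_\ell)$ is prime to $p$ and annihilates both $N_\ell[\psi-1]$ and $N_\ell/(\psi-1)N_\ell$, so neither contributes to $\mu$; in particular the map $\bigoplus_i\Lambda_{\Gamma'}/(p^{\alpha_{\ell,i}})\to Z'_\ell$ is injective (its kernel is killed by both some $p^\alpha$ and by $p_{L/L'}(h_\ell)$, hence trivial), and comparing $p$-power factors of characteristic ideals finishes the argument. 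Your phrase ``one further finite avoidance to kill the contribution of the height-two primes supporting $N$'' is the right instinct, but the stated goals (``its specialization is torsion'', ``$N'$ still pseudo-null'') are not what you need and not what the avoidance buys; what you need and get is a prime-to-$p$ annihilator of the specialized pseudo-null pieces.
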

\begin{proof} 
For each $v\in \mathfrak b$ with non-trivial decomposition subgroup $\Gamma_v$, if $\psi\not\in \Gamma_v$ or $\Gamma_v$ is of $\Z_p$-rank greater than one,  then $v$ does not split completely over $L'$. To have $\mathfrak b_{L'}=\mathfrak b_L$, we only need to choose $\psi$ away from all rank one $\Gamma_v$, $v\in\mathfrak b$. 

Similar to \eqref{e:iwasawa}, we have
\begin{equation}\label{e:iwasawal}
\xymatrix{0 \ar[r] &  \bigoplus_{i=1}^{m_\ell} \Lambda_\Gamma/(p^{\alpha_{\ell,i}}) \oplus \bigoplus_{j=1}^{n_\ell} \Lambda_\Gamma/(\eta_{\ell,j}^{\beta_{\ell,j}})
\ar[r]  &  Z_\ell \ar[r] &  N_\ell \ar[r] & 0,}
\end{equation}
where $N_\ell$ is pseudo-null and every $\eta_{\ell,j}$ is not divisible by $p$. Choose  for each $\ell$, an annihilator $h_\ell\in\Lambda_\Gamma$ of $N_\ell$, not divisible by $p$, and put 
$$g:=\theta\cdot \prod_{\ell=1}^s h_\ell\cdot \eta_{\ell,1}\cdot\cdots\cdot \eta_{\ell,n_\ell}.$$
We also choose $\psi$ satisfying \eqref{e:nabla}. Since $\hat{\Gamma}'=T_\psi$, there exists $\chi\in\hat\Gamma'$ such that
$\chi(p_{L/L'}(g))\not\in p\cdot \O$, so $ p_{L/L'}(g)\not\in p\cdot\Lambda_{\Gamma'}$.
Because $p^\alpha\cdot g^\beta\cdot Z_\ell=0$, for some $\alpha,\beta\in\Z$, we have $p^\alpha\cdot p_{L/L'}(g^\beta)\cdot Z'_\ell=0$.
Hence $Z'_\ell$ is torsion over $\Lambda_{\Gamma'}$.
 

To compare the elementary $\mu$-invariants of $Z_\ell$ and $Z'_\ell$, we apply the maps of multiplication by $\psi-1$ to \eqref{e:iwasawal} and use the snake lemma
to obtain the exact sequence
\begin{equation}\label{e:iwasawall}
N_\ell[\psi-1] \longrightarrow \bigoplus_{i=1}^{m_\ell} \Lambda_{\Gamma'}/(p^{\alpha_{\ell,i}}) \oplus \bigoplus_{j=1}^{n_\ell} \Lambda_{\Gamma'}/
(p_{L/L'}(\eta_{\ell,j})^{\beta_{\ell,j}})
\longrightarrow  Z'_\ell \longrightarrow   N_\ell /\mathscr I_\Psi N_\ell.
\end{equation}
Because $N_\ell[\psi-1]$ is annihilated by  $p_{L/L'}(g)$ which is relatively prime to $p$, the second arrow in the exact sequence
is injective on $\bigoplus_{i=1}^{m_\ell} \Lambda_{\Gamma'}/(p^{\alpha_{\ell,i}})$. 
We complete the proof by comparing the $p$th power factors of the characteristic ideals of items in the sequence.


\end{proof}

In Lemma \ref{l:monsky}, the field $L'$ is a $\Z^{d-1}_p$-extension of $K$.
By repeatedly applying the lemma, we obtain sequences
\begin{equation}\label{e:li}
L\supset L'\supset \cdots \supset L^{(d-1)},
\end{equation}
and, for each $\ell$, 
\begin{equation}\label{e:zi}
Z_\ell\longrightarrow Z'_\ell\longrightarrow \cdots \longrightarrow Z_\ell^{(d-1)}.
\end{equation}
Put $\Psi_0=\Psi$, $\Psi_i=\Gal(L/L^{(i+1)})$, and $\Gamma^{(i+1)}=\Gal(L^{(i+1)}/K)=\Gamma/\Psi_i$.
Then $L^{(d-1)}$ is a $\Z_p$-extension of $K$ with $\mathfrak b_{L^{(d-1)}}=\mathfrak b_L$, $p_{L/L^{(d-1)}}(\theta)$ not divisible by $p$,
and for each $\ell$, the $\Lambda_{\Gamma^{(d-1)}}$-module $Z_\ell^{(d-1)}=Z_\ell/\mathscr I_{\Psi_{d-2}} Z_\ell$ has the same elementary $\mu$-invariants as those of $Z_\ell$ over $\Lambda_\Gamma$. These elementary $\mu$-invariants $p^{\alpha_{\ell,1}},...,p^{\alpha_{\ell,m_\ell}}$ can be recovered by using the counting formula below. For each $\nu$, define 
$$\alpha_{\ell, i,\nu}=\mathrm{min}\{\nu, \alpha_{\ell,i}\}.$$
Let $\sigma\in\Gamma^{(d-1)}$ be a topological generator and put $x=\sigma-1$.
Let $\mathscr J_{\nu,n}$ denote the ideal of $\Lambda_{\Gamma^{(d-1)}}$ generated by $p^\nu$ and $(x+1)^{p^n}-1$.

\begin{lemma}\label{l:counting} Let the notation be as above. 
Then
$$\log_p |Z^{(d-1)}_\ell/\mathscr J_{\nu,n} Z^{(d-1)}_\ell|=p^n\cdot \sum_{i=1}^{m_\ell} \alpha_{\ell, i,\nu}+\mathrm{O}(1).$$
\end{lemma}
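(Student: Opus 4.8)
I shall write $\Lambda:=\Lambda_{\Gamma^{(d-1)}}$, so that $x=\sigma-1$ identifies $\Lambda$ with $\Z_p[[x]]$; put $\omega_n:=(x+1)^{p^n}-1$, a distinguished polynomial of degree $p^n$, so that $\mathscr J_{\nu,n}=(p^\nu,\omega_n)$, the quotient $\Lambda/\mathscr J_{\nu,n}\cong(\Z/p^\nu)[x]/(\omega_n)$ is finite, and $p^\nu,\omega_n$ is a regular sequence in $\Lambda$. The plan has two parts: (i) reduce the computation to an elementary $\Lambda$-module, (ii) carry it out summand by summand.

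For (i): by the structure theorem for finitely generated torsion $\Lambda$-modules, together with Lemma \ref{l:monsky} (which identifies the elementary $\mu$-invariants of $Z^{(d-1)}_\ell$ with $p^{\alpha_{\ell,1}},\dots,p^{\alpha_{\ell,m_\ell}}$), there is a $\Lambda$-homomorphism relating $Z^{(d-1)}_\ell$ and the elementary module
$$E:=\bigoplus_{i=1}^{m_\ell}\Lambda/(p^{\alpha_{\ell,i}})\ \oplus\ \bigoplus_{j}\Lambda/(g_j),$$
with the $g_j$ distinguished polynomials prime to $p$, whose kernel and cokernel are pseudo-null, hence finite (as $\Lambda$ has dimension two). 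I would then show that along such a map the quantity $\log_p|\,\cdot/\mathscr J_{\nu,n}\,|$ changes only by $\mathrm O(1)$, uniformly in $n$. Factoring the map as a surjection followed by an injection, each with finite kernel or cokernel $F$, and applying $-\otimes_\Lambda\Lambda/\mathscr J_{\nu,n}$ and its first derived functor, the change is controlled by $|F|$ and by $|\Tor_1^\Lambda(F,\Lambda/\mathscr J_{\nu,n})|$; the regular sequence $p^\nu,\omega_n$ supplies a length-two Koszul resolution of $\Lambda/\mathscr J_{\nu,n}$, which exhibits $\Tor_1^\Lambda(F,\Lambda/\mathscr J_{\nu,n})$ as a subquotient of $F^{2}$ — of order at most $|F|^{2}$, independent of $n$. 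This last uniformity is the one genuinely delicate point; the rest is bookkeeping.

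For (ii): a $p$-power summand contributes $\Lambda/(p^{\alpha_{\ell,i}},p^\nu,\omega_n)\cong(\Z/p^{\alpha_{\ell,i,\nu}})[x]/(\omega_n)$, which is free of rank $p^n$ over $\Z/p^{\alpha_{\ell,i,\nu}}$ because $\omega_n$ is monic of degree $p^n$; summing over $i$ gives exactly $p^n\sum_{i=1}^{m_\ell}\alpha_{\ell,i,\nu}$ as the contribution to $\log_p|E/\mathscr J_{\nu,n}E|$. A non-$p$ summand contributes $\Lambda/(g_j,p^\nu,\omega_n)$, a quotient of the fixed finite group $(\Z/p^\nu)[x]/(g_j)$ (free of rank $\deg g_j$ over $\Z/p^\nu$), hence at most $\nu\deg g_j$ to $\log_p$, a bound independent of $n$. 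Combining the two types with step (i) yields $\log_p|Z^{(d-1)}_\ell/\mathscr J_{\nu,n}Z^{(d-1)}_\ell|=p^n\sum_{i=1}^{m_\ell}\alpha_{\ell,i,\nu}+\mathrm O(1)$, the error depending on $Z^{(d-1)}_\ell$ and $\nu$ but not on $n$, as asserted.
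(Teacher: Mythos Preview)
Your argument is correct, but the paper takes a slightly more economical route. Rather than comparing $Z^{(d-1)}_\ell$ with the full elementary module $E$ and controlling the discrepancy via a Koszul resolution and $\Tor_1$, the paper first passes to $Z^{(d-1)}_\ell/p^\nu Z^{(d-1)}_\ell$. Over the one-variable algebra $\Lambda=\Z_p[[x]]$, reducing modulo $p^\nu$ turns every non-$p$ elementary summand $\Lambda/(\eta_j^{\beta_j})$ into a finite group and leaves the $p$-part as $\bigoplus_i\Lambda/(p^{\alpha_{\ell,i,\nu}})$; thus one obtains a short exact sequence
\[
0\longrightarrow \bigoplus_{i=1}^{m_\ell}\Lambda/(p^{\alpha_{\ell,i,\nu}})\longrightarrow Z^{(d-1)}_\ell/p^\nu Z^{(d-1)}_\ell\longrightarrow N_{\ell,\nu}\longrightarrow 0
\]
with $N_{\ell,\nu}$ finite. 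A single application of the snake lemma for multiplication by $\omega_n=(x+1)^{p^n}-1$ then sandwiches $Z^{(d-1)}_\ell/\mathscr J_{\nu,n}Z^{(d-1)}_\ell$ between $\bigoplus_i(\Z/p^{\alpha_{\ell,i,\nu}})[x]/(\omega_n)$ and error terms bounded by $|N_{\ell,\nu}|$, which is independent of $n$. So the paper avoids both the separate treatment of the non-$p$ summands and the Koszul/$\Tor$ bookkeeping; your approach, on the other hand, makes the uniformity in $n$ completely explicit and would generalise more readily to longer regular sequences. Both reach the same conclusion with the same dependence of the $\mathrm O(1)$ on $\nu$.
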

\begin{proof} Taking $Z=Z^{(d-1)}_\ell/p^\nu Z^{(d-1)}_\ell$ in \eqref{e:iwasawa}, we deduce the following exact sequence, 
\begin{equation}\label{e:zpnuz}
\xymatrix{0 \ar[r] &  \bigoplus_{i=1}^{m_\ell} \Lambda_{\Gamma^{(d-1)}}/(p^{\alpha_{\ell,i,\nu}})\ar[r]  &  Z^{(d-1)}_\ell/p^\nu Z^{(d-1)}_\ell \ar[r] &  N_{\ell,\nu} \ar[r] & 0,}
\end{equation}
where $N_{\ell,\nu}$ is finite, since the other two items are pseudo isomorphic.
Write $R_{\alpha}$ for $\Z_p/p^\alpha\Z_p$. The lemma is a consequence of the exact sequence induced from \eqref{e:zpnuz}:
$$\xymatrix{N_0[\sigma^{p^n}-1]\ar[r] &   \bigoplus_{i=1}^{m_\ell} R_{\alpha_{\ell,i,\nu}}[x]/((x+1)^{p^n}-1) \ar[r]  &  Z_\ell^{(d-1)}/\mathscr J_{\nu,n} Z_\ell^{(d-1)}
\ar[dl] \\
& N_0/(\sigma^{p^n}-1) N_0.}
$$
\end{proof}

To apply the above to dual Selmer groups, we need the following simplified control lemma.
For $K\subset F \subset L$, consider the restriction maps
$$\mathrm{res}_{L/F}^{(\nu)}:\Sel_{p^\nu}(A/F)\longrightarrow \Sel_{p^\nu}(A/L)^{\Gal(L/F)}.   $$

 Let $\texttt K^{(n)}$ denote the $n$th layer of the $\Z_p$-extension $L^{(d-1)}/K$.
 Let $\mathfrak r$ denote the ramification locus of $L/K$, which is assumed to be finite.
 
\begin{lemma}\label{l:control} Let $L^{(d-1)}/K$ be an intermediate $\Z_p$-extension of $L/K$.
Suppose $\mathfrak r$ contains only places where $A$
has good ordinary reduction or multiplicative reduction,
$\mathfrak b_{L^{(d-1)}}=\mathfrak b_L$ and $\mathfrak b$ contains $\mathfrak r$ as well as all places where 
$A$ has bad reduction. 
For a given $\nu$, the orders of $\ker(\mathrm{res}^{(\nu)}_{L/\texttt K^{(n)}})$ and 
$\coker(\mathrm{res}^{(\nu)}_{L/\texttt K^{(n)}})$ are bounded, as $n$ varies.
\end{lemma}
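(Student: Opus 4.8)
The plan is to prove the control lemma by the standard inflation–restriction argument, working place by place on the local conditions and using the finiteness of $\mathfrak r$ together with the hypothesis $\mathfrak b_{L^{(d-1)}}=\mathfrak b_L$ to kill the ``extra'' contributions that usually obstruct control theorems in the function-field / $\mathbb Z_p$-extension setting. Write $\Gamma_n:=\Gal(L/\texttt K^{(n)})$ and recall the defining exact sequence of $\Sel_{p^\nu}$ as the kernel of $\coh^1(F,A_{p^\nu})\to\bigoplus_v\coh^1(F_v,A)$. The first step is the global inflation–restriction sequence
\[
0\lr \coh^1(\Gamma_n,A_{p^\nu}(L))\lr \coh^1(\texttt K^{(n)},A_{p^\nu})\lr \coh^1(L,A_{p^\nu})^{\Gamma_n}\lr \coh^2(\Gamma_n,A_{p^\nu}(L)),
\]
from which $\ker(\mathrm{res})$ is controlled by $\coh^1(\Gamma_n,A_{p^\nu}(L))$; since $\Gamma_n\cong\Z_p$ and $A_{p^\nu}(L)$ is finite of bounded order, this group is finite of order bounded independently of $n$. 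For the cokernel one must compare the local conditions: an element of $\Sel_{p^\nu}(A/L)^{\Gamma_n}$ lifts to $\coh^1(\texttt K^{(n)},A_{p^\nu})$ (again up to the bounded $\coh^2(\Gamma_n,A_{p^\nu}(L))$), and one must check the lift lands in $\Sel_{p^\nu}(A/\texttt K^{(n)})$, i.e.\ is locally trivial in $\coh^1(\texttt K^{(n)}_v,A)$ at every $v$.

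**The local analysis** splits according to the behaviour of $v$ in $L/\texttt K^{(n)}$. At a place $v$ of $\texttt K^{(n)}$ that does not lie over $\mathfrak r\cup\{\text{bad places}\}$, $A$ has good reduction and $L/\texttt K^{(n)}$ is unramified at $v$; here the local control map $\coh^1(\texttt K^{(n)}_v,A)[p^\nu]\to \coh^1(L_w,A)[p^\nu]$ has trivial kernel by the standard argument (the finite/unramified subgroup argument, e.g.\ Mattuck / the fact that $\coh^1(\O_v^{ur},\mathcal A)=0$ for an abelian scheme), so no discrepancy arises. The remaining places form a \emph{finite} set: those over $\mathfrak r$, over the bad places of $A$, and over $\mathfrak b$. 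For $v\in\mathfrak r$, by hypothesis $A$ has good ordinary or multiplicative reduction, and the decomposition group of $v$ in $\Gamma=\Gal(L/K)$ is of $\Z_p$-rank $\ge1$; one computes the kernel of $\coh^1(\texttt K^{(n)}_v,A)[p^\nu]\to\coh^1(L_w,A)[p^\nu]$ and shows it is finite of bounded order (for good ordinary reduction the local Iwasawa theory of the ordinary part, and for multiplicative reduction the Tate parametrization, both give this directly; the ramified $\Z_p$-extension locally has bounded $A(L_w)/p^\nu$-type contributions). For $v\in\mathfrak b\setminus\mathfrak r$ not over a bad place, $A$ again has good reduction; such $v$ either splits completely in $L/\texttt K^{(n)}$ — but then, because $\mathfrak b_{L^{(d-1)}}=\mathfrak b_L$, it already splits completely in $L/K$, and the local map is an isomorphism — or it does not split completely, in which case the residue extension contributes and one is back to the bounded unramified-local-control situation. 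Summing the finitely many bounded local discrepancies bounds $\coker(\mathrm{res}^{(\nu)}_{L/\texttt K^{(n)}})$ uniformly in $n$.

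**The main obstacle** is the control of the local terms at the ramified places $v\in\mathfrak r$ as $n\to\infty$: a priori $A(L_w)/p^\nu A(L_w)$, or equivalently the local cohomology $\coh^1(L_w/\texttt K^{(n)}_w, A(L_w))[p^\nu]$, could grow with $n$. The point to exploit is the hypothesis that $A$ has good \emph{ordinary} or multiplicative reduction at these places: in the ordinary case the formal group over $L_w$ has, by the ordinariness, a one-dimensional étale quotient whose $\Z_p$-extension cohomology is bounded (this is exactly the local ingredient behind the known fact that $X$ is torsion over $\Lambda_\Gamma$, cf.\ \cite{tan13}), and in the multiplicative case the Tate curve presentation $A(L_w)\cong L_w^*/\tilde q^{\Z}$ makes $\coh^1(L_w/\texttt K^{(n)}_w,\cdot)$ explicitly computable and bounded. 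So the key step is to isolate, at each $v\in\mathfrak r$, the ordinary/multiplicative local structure and invoke the corresponding local boundedness; everything else is the routine bounded inflation–restriction bookkeeping and the ``$\mathfrak b_{L^{(d-1)}}=\mathfrak b_L$'' remark that prevents new completely split places from appearing at finite level. This is the simplified control statement needed to feed Lemma \ref{l:counting}, so one only needs \emph{boundedness} of kernel and cokernel, not their vanishing, which makes all of the above estimates suffice.
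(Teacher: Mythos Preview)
Your overall strategy coincides with the paper's: bound $\ker(\mathrm{res})$ by $\coh^1(\Gamma_n, A_{p^\nu}(L))$ via inflation--restriction, and bound $\coker(\mathrm{res})$ by the local terms $\coh^1(L_w/\texttt K^{(n)}_w, A(L_w))[p^\nu]$ via Hochschild--Serre, then analyze these place by place. Two issues, however, need repair.

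First, a minor one: $\Gamma_n = \Gal(L/\texttt K^{(n)})$ is not $\Z_p$ but $\Z_p^d$ (it is an open subgroup of index $p^n$ in $\Gamma \cong \Z_p^d$). This does not break your argument, since the cohomology of $\Z_p^d$ with finite coefficients is still bounded; the paper invokes \cite[Lemma~3.2.1]{tan10} for the explicit bound $|M|^{d^i}$, then handles the finite step $\texttt K'^{(n)}/\texttt K^{(n)}$ by counting cochains.

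The more serious gap concerns how you use the hypothesis $\mathfrak b_{L^{(d-1)}} = \mathfrak b_L$. You invoke it only for places of $\mathfrak b \setminus \mathfrak r$ with good reduction, and even there your phrasing (``splits completely in $L/\texttt K^{(n)}$'') is not the right condition. The actual role of this hypothesis is to bound, \emph{uniformly in $n$}, the number of places of $\texttt K^{(n)}$ lying over each $v \in \mathfrak b$. For $v \in \mathfrak b \setminus \mathfrak b_L = \mathfrak b \setminus \mathfrak b_{L^{(d-1)}}$, the place $v$ does not split completely in the $\Z_p$-extension $L^{(d-1)}/K$, so its decomposition subgroup in $\Gal(L^{(d-1)}/K) \cong \Z_p$ has some fixed finite index, and the number of places of $\texttt K^{(n)}$ above $v$ is bounded by that index. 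Without this, your claim that ``the remaining places form a finite set'' (in the sense of bounded independently of $n$) fails: a place $v \in \mathfrak r$ has decomposition subgroup of $\Z_p$-rank $\geq 1$ in $\Gamma$, but that subgroup could lie entirely inside $\Gal(L/L^{(d-1)})$, in which case $v$ splits completely in $L^{(d-1)}$ and there are $p^n$ places of $\texttt K^{(n)}$ above it; your sum of ``finitely many bounded local discrepancies'' then grows like $p^n$. The hypothesis rules this out (since $v\in\mathfrak r\Rightarrow v\notin\mathfrak b_L\Rightarrow v\notin\mathfrak b_{L^{(d-1)}}$), and the same reasoning is needed at the bad places. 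Once the number of contributing places is bounded, your sketch of bounding each individual $\mathscr H^{(\nu)}_w$ (ordinary via \cite{tan10}, multiplicative via the Tate parametrization, with a further reduction from non-split to split over an unramified extension) is exactly what the paper does.
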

\begin{proof} Let $M=A_{p^\nu}(L)=A_{p^\nu}(K')$ for some finite sub-extension $K'/K$ of $L/K$. Write $\mathtt K'^{(n)}$ for $\mathtt K^{(n)}K'$.
\cite[Lemma 3.2.1]{tan10} says that for $i=0,1,2$,
$$|\coh^i(L/\texttt K'^{(n)}, M)|\leq |M|^{d^i}.$$ 
Since $[\mathtt K'^{(n)}:\mathtt K^{(n)}]\leq [K':K]$, by counting the number of co-chains, we deduce
$$|\coh^i( \mathtt K'^{(n)}/\mathtt K^{(n)},M)|\leq |M|^{[K':K]^i}.$$

This bounds $|\ker(\mathrm{res}^{(\nu)}_{L/\texttt K^{(n)}})|$. To bound $|\coker(\mathrm{res}^{(\nu)}_{L/\texttt K^{(n)}})|$, by the Hochschild-Serre spectral sequence, we need to bound (see the proof of \cite[Theorem 4]{tan10})
$$|\bigoplus_{\text{all}\;v}\prod_{w\mid v} \coh^1(L_w/\texttt K^{(n)}_{w},A(L_w))[p^\nu]|.$$ 

Write $\mathscr H^{(\nu)}_w$ for $\coh^1(L_w/\texttt K^{(n)}_w,A(L_w))[p^\nu]$.
If $v\not\in \mathfrak b$,  then $\mathscr H^{(\nu)}_w=0$ \cite[I.3.8]{mil86}, for all $w\mid v$. Also, $\mathscr H^{(\nu)}_w=0$,
for all $w$ sitting over $\mathfrak b_{L}$, because $\texttt K^{(n)}_w=L_w$. 

Suppose $v\in\mathfrak b$ but $v\not\in\mathfrak b_L=\mathfrak b_{L^{(d-1)}}$.
The number of places of $\texttt K^{(n)}$ sitting over $v$ is bounded as $n$ varies. We need to bound the order of $\mathscr H^{(\nu)}_w$,
for all $w\mid v$. If $A$ has good ordinary reduction at $v$, by \cite[(3) and Theorem 2]{tan10}, 
the order of $\mathscr H^{(\nu)}_w$ is bounded by $p^{2\nu(d+1)\dim A}$. It is well-known (for instance, see the last two paragraphs of \cite{tan10})
that if $A$ has split multiplicative reduction at $v$, the order of $\mathscr H^{(\nu)}_w$ is bounded by 
$p^{\nu d\dim A }$. In general, if $A$ has multiplicative reduction at $v$, then over some unramified extension $K'_v/K_v$, the reduction of $A$ becomes split multiplicative. Since $\mathscr H_w^{(\nu)}\subset \coh^1(L_wK'_v/\mathtt K_v, A(L_wK'_v))$, writing $\mathtt K'_v$,
$L'_w$ for $\mathtt K_vK'_v$, $L_wK'_v$, we end the proof by using the exact sequence
$$\xymatrix{\coh^1(\mathtt K'_v/\mathtt K_v, A(\mathtt K'_v)) \ar@{^{(}->}[r] & \coh^1(L'_w/\mathtt K_v, A(L'_w))\ar[r] &
\coh^1(L'_w/\mathtt K'_v ,A(L'_w))}$$
and the fact that the component group of $A/K'_v$ has $p$-rank bounded by $\dim A$ (see \cite[Proposition 5.2]{bx96})
so that by \cite[Proposition I.3.8]{mil86} the order of $\coh^1(\mathtt K'_v/\mathtt K_v, A(\mathtt K'_v))$ is bounded.

\end{proof}

\begin{lemma}\label{l:special} Let $\theta\in\Lambda_\Gamma$ be an element not divisible by $p$.
Let $A_\ell$, $\ell=1,...,s$, be ordinary abelian varieties defined over $K$ such that all
$Z_\ell:=\Sel_{p^\infty}(A_\ell/L)^\vee$ are torsion over $\Lambda_{\Gamma}$ and the ramification locus $\mathfrak r$
contains only places where each $A_\ell$ has either good ordinary reduction or multiplicative reduction.
Let $\mathfrak b$ be a finite set of places of $K$ containing $\mathfrak r$ and all places where some $A_\ell$ has bad reduction.
Assume that $p^{\alpha_{\ell,1}},...,p^{\alpha_{\ell,m_\ell}}$ are elementary $\mu$-invariants of $Z_\ell$.
There exists an intermediate 
$\Z_p$-extension $L^{(d-1)}/K$ of $L/K$ such that the following holds:
\begin{enumerate}
\item[(a)] $p_{L/L^{(d-1)}}(\theta)\not\in p\Lambda_{\Gamma^{(d-1)}}$, where $\Gamma^{(d-1)}=\Gal(L^{(d-1)}/K)$.
\item[(b)] $\mathfrak b_{L^{(d-1)}}=\mathfrak b_L$.
\item[(c)] For each $\ell$, the elementary $\mu$-invariants of $\Sel_{p^\infty}(A_\ell/L^{(d-1)})^\vee$ over $\Lambda_{\Gamma^{(d-1)}}$ are the same as those of $Z_\ell$ over $\Lambda_\Gamma$.
\item[(d)] In particular, if $L/K$ is a $\Z_p$-extension and $K^{(n)}$ denote the $n$th layer, then
\begin{equation}\label{e:anpnu}
\log_p |\Sel_{p^\nu}(A_\ell/K^{(n)})|=p^n\cdot \sum_{i=1}^{m_\ell} \alpha_{\ell, i,\nu}+\mathrm{O}(1).
\end{equation}
\end{enumerate}

\end{lemma}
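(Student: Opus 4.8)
The plan is to combine the specialization machinery of Lemma \ref{l:monsky} with the control theory of Lemma \ref{l:control} and the counting formula of Lemma \ref{l:counting}. First I would apply Lemma \ref{l:monsky} iteratively, exactly as in the passage following it, to the collection $\{Z_\ell\}_{\ell=1}^s$ together with the element $\theta$ and the set $\mathfrak b$. Since $d$ may be larger than $2$, one repeats the lemma $d-1$ times, producing the tower \eqref{e:li} and the maps \eqref{e:zi}; at each stage the chosen $\psi$ is taken away from the finitely many rank-one submodules of $\Gamma$ dictated by the decomposition groups of the places in $\mathfrak b$ (to preserve $\mathfrak b_{L'}=\mathfrak b_L$) and away from those dictated by \eqref{e:nabla} for the auxiliary element $g$ built from $\theta$, the $\eta_{\ell,j}$'s, and annihilators of the pseudo-null quotients $N_\ell$. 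This yields the intermediate $\Z_p$-extension $L^{(d-1)}/K$ satisfying (a), (b), and, since each step preserves torsionness and elementary $\mu$-invariants, also (c). Note that each $Z_\ell^{(d-1)}$ is a priori only $\Sel_{p^\infty}(A_\ell/L)^\vee/\mathscr I_{\Psi_{d-2}}\cdot(\cdot)$, so there is a point to address here: that this coincides with $\Sel_{p^\infty}(A_\ell/L^{(d-1)})^\vee$ up to pseudo-isomorphism, which is precisely what the control Lemma \ref{l:control} provides once we know the ramification and splitting hypotheses are met along the tower.

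Next, for part (d), assume $L/K$ is already a $\Z_p$-extension, so $L^{(d-1)}=L$ and $\Gamma^{(d-1)}=\Gamma=\Gal(L/K)$ is itself procyclic with topological generator $\sigma$; set $x=\sigma-1$. The $n$th layer $K^{(n)}$ is cut out by $\sigma^{p^n}$, so $\Gal(L/K^{(n)})$ is generated by $\sigma^{p^n}$ and $\mathscr J_{\nu,n}=(p^\nu,(x+1)^{p^n}-1)$ is the augmentation-type ideal governing coinvariants for this subgroup modulo $p^\nu$. The Pontryagin dual of $\Sel_{p^\nu}(A_\ell/K^{(n)})$ is related to $Z_\ell/\mathscr J_{\nu,n}Z_\ell$ via the control Lemma \ref{l:control}: the restriction map $\mathrm{res}^{(\nu)}_{L/K^{(n)}}:\Sel_{p^\nu}(A_\ell/K^{(n)})\to\Sel_{p^\nu}(A_\ell/L)^{\Gal(L/K^{(n)})}$ has kernel and cokernel of bounded order as $n$ varies (the hypotheses on $\mathfrak r$ and $\mathfrak b$ are exactly those needed to invoke that lemma). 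Dualizing, $|\Sel_{p^\nu}(A_\ell/K^{(n)})|$ differs from $|(\Sel_{p^\infty}(A_\ell/L)[p^\nu])_{\Gal(L/K^{(n)})}|$ by a bounded factor; and this in turn, by another bounded-index comparison between $\Sel_{p^\infty}(A_\ell/L)^\vee/\mathscr J_{\nu,n}$ and $(\Sel_{p^\infty}(A_\ell/L)[p^\nu])^\vee$-coinvariants, equals $|Z_\ell/\mathscr J_{\nu,n}Z_\ell|$ up to $\mathrm{O}(1)$. Then Lemma \ref{l:counting} gives $\log_p|Z_\ell/\mathscr J_{\nu,n}Z_\ell|=p^n\sum_{i=1}^{m_\ell}\alpha_{\ell,i,\nu}+\mathrm{O}(1)$, which is \eqref{e:anpnu}.

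The main obstacle I expect is the bookkeeping in the bounded-index reductions of part (d): passing cleanly from $\Sel_{p^\nu}(A_\ell/K^{(n)})$ to $Z_\ell/\mathscr J_{\nu,n}Z_\ell$ requires chaining together (i) the control lemma for $\Sel_{p^\nu}$, (ii) the comparison of $\Sel_{p^\nu}(A_\ell/L)$ with $\Sel_{p^\infty}(A_\ell/L)[p^\nu]$, and (iii) the identification of $\Gal(L/K^{(n)})$-coinvariants of the $p^\nu$-torsion of a cofinitely generated $\Lambda_\Gamma$-module with $Z_\ell/\mathscr J_{\nu,n}Z_\ell$ — each introduces an error term one must certify is $\mathrm{O}(1)$ independently of $n$. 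This is genuinely a finiteness/uniformity argument rather than a computation, and it is where care is needed. The earlier steps (a)--(c), by contrast, are a direct iteration of Lemma \ref{l:monsky}, with the only subtlety being the simultaneous avoidance of finitely many bad rank-one subgroups at each of the $d-1$ stages, which is harmless since a finite union of proper closed subgroups cannot exhaust $\Gamma$.
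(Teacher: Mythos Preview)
Your treatment of (a), (b), and (d) matches the paper's: iterate Lemma~\ref{l:monsky} to get $L^{(d-1)}$ with the right splitting and non-divisibility properties, then for (d) combine Lemma~\ref{l:counting} with Lemma~\ref{l:control} to pass from $Z_\ell/\mathscr J_{\nu,n}Z_\ell$ to $|\Sel_{p^\nu}(A_\ell/K^{(n)})|$ up to $\mathrm O(1)$.

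The gap is in your argument for (c). You correctly flag that Lemma~\ref{l:monsky} only tells you about $Z_\ell^{(d-1)}=Z_\ell/\mathscr I_{\Psi_{d-2}}Z_\ell$, not about $\Sel_{p^\infty}(A_\ell/L^{(d-1)})^\vee$, and that bridging these is the issue. But Lemma~\ref{l:control} does \emph{not} provide that bridge: it is stated only for the finite layers $\mathtt K^{(n)}$, giving bounded kernel and cokernel of $\mathrm{res}^{(\nu)}_{L/\mathtt K^{(n)}}$ as $n$ varies. It does not assert (and one cannot directly extract from it) a pseudo-isomorphism between $Z_\ell^{(d-1)}$ and $\Sel_{p^\infty}(A_\ell/L^{(d-1)})^\vee$ as $\Lambda_{\Gamma^{(d-1)}}$-modules. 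Passing to a limit over $n$ to get control at the level of $L^{(d-1)}$ itself is not automatic.

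The paper avoids this entirely by reversing the order: it proves the counting formula first (using control from $L$ down to $\mathtt K^{(n)}$, which \emph{is} what Lemma~\ref{l:control} gives), obtaining
\[
\log_p|\Sel_{p^\nu}(A_\ell/\mathtt K^{(n)})|=p^n\sum_{i=1}^{m_\ell}\alpha_{\ell,i,\nu}+\mathrm O(1),
\]
and then, to prove (c), applies part (d) separately to the $\Z_p$-extension $L^{(d-1)}/K$ (now using control from $L^{(d-1)}$ down to $\mathtt K^{(n)}$) to get a second formula in terms of the unknown $\mu$-invariants $p^{\alpha'_{\ell,i}}$ of $\Sel_{p^\infty}(A_\ell/L^{(d-1)})^\vee$. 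Comparing the two formulas for all $\nu$ forces $\sum_i\alpha'_{\ell,i,\nu}=\sum_i\alpha_{\ell,i,\nu}$, hence equality of the multisets of elementary $\mu$-invariants. So (c) is deduced \emph{from} (d), not alongside it; no control between $L$ and $L^{(d-1)}$ is ever needed.
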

\begin{proof} (a) and (b) are from Lemma \ref{l:monsky}. Observe that $Z^{(d-1)}_\ell/\mathscr J_{\nu,n} Z^{(d-1)}$ is nothing but
the Pontryagin dual of $\Sel_{p^\nu}(A_\ell/L)^{\Gal(L/\mathtt K^{(n)})}$, so by Lemma \ref{l:counting}, \ref{l:control}, we have
\begin{equation}\label{e:selnu1}
\log_p |\Sel_{p^\nu}(A_\ell/\mathtt K^{(n)})|=p^n\cdot \sum_{i=1}^{m_\ell} \alpha_{\ell, i,\nu}+\mathrm{O}(1).
\end{equation}
In the situation of (d), $L=L^{(d-1)}$, and hence, \eqref{e:anpnu} holds.
To show (c), we assume that the elementary $\mu$-invariants of $\Sel_{p^\infty}(A_\ell/L^{(d-1)})^\vee$ over $\Lambda_{\Gamma^{(d-1)}}$ are
$p^{\alpha'_{\ell,1}},...,p^{\alpha'_{\ell,w_\ell}}$. Apply (d) to $L^{(d-1)}/K$ and obtain
$$\log_p |\Sel_{p^\nu}(A_\ell/\mathtt K^{(n)})|=p^n\cdot \sum_{i=1}^{w_\ell} \alpha'_{\ell, i,\nu}+\mathrm{O}(1),
$$
where, as before, $\alpha'_{\ell, i,\nu}:=\mathrm{min}\{\alpha'_{\ell, i},\nu\}$.
This and \eqref{e:selnu1} leads to
$$\sum_{i=1}^{w_\ell} \alpha'_{\ell, i,\nu}=\sum_{i=1}^{m_\ell} \alpha_{\ell, i,\nu},$$
for all $\nu$. We conclude that $w_\ell=m_\ell$ and $p^{\alpha'_{\ell,1}},...,p^{\alpha'_{\ell,w_\ell}}$ are the same as $p^{\alpha_{\ell,1}},...,p^{\alpha_{\ell,m_\ell}}$.

\end{proof}

\subsection{The elementary $\mu$-invariants}\label{su:last} In this section, we complete the proof of Proposition \ref{p:p} and Proposition \ref{p:gen}. For each $\Z_p$-subextension $F/K$ of $L/K$, write
$$S_b(F)=\eth'_F\sqcup \eth_F,$$
where if $p=2$, $\eth'_F$ is the set of places at which $A/K$ has non-split multiplicative reduction such that
the group of components is of even order; if $p\not=2$, $\eth'_F=\emptyset$. For $w\in S_b(F)$ sitting on $v\in S_b$,
if $w\in \eth'_F$ and $v\in\eth$, or $w\in \eth_F$ and $v\in \eth'$, then $F_w\not=K_v$, and hence there is only finitely many places of $F$ sitting over $v$.




\begin{proof}[Proof of Proposition \ref{p:p}]  We apply Lemma \ref{l:special}, taking $s=1$, $Z_1=X^{(p)}$, 
$\mathfrak b=\mathfrak r\cup S_b$. 
Let $\mathtt K^{(n)}$ be the $n$th layer of $L^{(d-1)}/K$. Since the degrees of $k/K$ and $\mathtt K^{(n)}/K$ are relatively prime,
one see that a place $v$ of $K$ splits completely in $k$, if and only if 
all places of $\mathtt K^{(n)}$ sitting over $v$ split completely in $k\mathtt K^{(n)}$, because both assertions are equivalent to that the decomposition subgroup $\Gal(k\mathtt K^{(n)}/K)_v$ contains no non-trivial element in $\Gal(k\mathtt K^{(n)}/\mathtt K^{(n)})$, and hence
contained in $\Gal(k\mathtt K^{(n)}/k)$. 
Put 
$$\eth_{0,n}:=\{w\in\eth_{\mathtt K^{(n)}}\;\mid\; w\;\text{splits completely over}\; k\mathtt K^{(n)}\}.$$ 
Then, by the discussion at the beginning of this section,
$$|\eth_{0,n}|=|\eth_0(\mathtt K^{(n)})|+\mathrm{O}(1)=p^n\cdot |\eth_1|+\mathrm O(1).$$
If $\F_{q^{(n)}}$ denote the constant field of $\mathtt K^{(n)}$, then
$$\deg \Delta_{A/\mathtt K^{(n)}}\cdot \log_pq^{(n)}=p^n\cdot \deg \Delta_{A/K}\cdot \log_p q.$$
Therefore, Lemma \ref{l:special}(d) and Proposition \ref{p:sel} say if $m$ is the $\mu$-rank of $A^{(p)}/L$, then 
$$p^n\cdot m=\log_p|\Sel_p(A^{(p)}/\mathtt K^{(n)})|+\mathrm{O}(1)\geq p^n\cdot (\frac{(p-1)\deg \Delta_{A/K}}{12}\cdot\log_p q-|\eth_1|)+\mathrm O(1).$$
This proves the proposition.
\end{proof}

\begin{proof}[Proof of Proposition \ref{p:gen}] 
Take $s=2$, $Z_1=X^{(p)}$, $Z_2=X$, $\mathfrak b=S_b\cup \mathfrak r$, $\theta=\Theta_L$, , so that $m=m_1$, and $\alpha_i=\alpha_{1,i}$, for $i=1,...,m$.  By Lemma \ref{l:special}, $\mathfrak b_{L^{(d-1)}}=\mathfrak b_L=\emptyset$ and $\Theta_{L^{(d-1)}}$ is not divisible by $p$. 
Thus, we may assume that $d=1$. 

Let $K^{(n)}$ denote the $n$th layer of $L/K$.
If $L=K_p^{(\infty)}$, we have shown in \S\ref{su:main} that $|\mathfrak w_{K^{(n)}}[p]|=\mathrm O(1)$; otherwise,
for $n$ sufficiently large, $L/K^{(n)}$ is totally ramified at certain place, so that $\Hom(\Gal(L/K^{(n)}),\Q_p/\Z_p)\cap \Hom(\mathfrak w_{K^{(n)}}, \Q_p/\Z_p)=\{0\}$. Hence, $\Hom(\mathfrak w_{K^{(n)}}, \Q_p/\Z_p)\longrightarrow \Hom(\mathfrak w_L, \Q_p/\Z_p)$ is injective, or equivalently, the map $\mathfrak w_L\longrightarrow
\mathfrak w_{K^{(n)}}$ is surjective, for sufficiently large $n$. Since $p\nmid \Theta_L$, $\mathfrak w_L$ has trivial $p$-part, the order of
$\mathfrak w_{K^{(n)}}[p]$ must be bounded. The assumption says
$$|S_b(K^{(n)})|=\mathrm O(1).$$
Therefore, by Lemma \ref{l:special}(d) 
and  Proposition \ref{p:sel}, we obtain
$$\begin{array}{rcl}
p^n \cdot m& =& p^n\cdot \sum_{i=1}^m\alpha_{1,i,1}\\
& =&\log_p |\Sel_p(A^{(p)}/K^{(n)})|+\mathrm O(1)\\
&=& p^n\cdot\frac{ (p-1)\deg\Delta_{A/K}}{12}\cdot \log_p q+\mathrm O(1),
\end{array}$$
that proves the first assertion. Then Lemma \ref{l:special}(d) and Proposition \ref{p:aap} lead to
$$\begin{array}{rcl}
p^n\cdot \sum_{i=1}^{m_2} \alpha_{2, i,\nu}&=&\log_p |\Sel_{p^\nu}(A/K^{(n)})|+\mathrm{O}(1)\\
&=& p^n\cdot(\log_p |\Sel_{p^{\nu+1}}(A^{(p)}/K^{(n)})|- \frac{ (p-1)\deg\Delta_{A/K}}{12}\cdot \log_p q)+\mathrm{O}(1)\\
&=& p^n\cdot \sum_{i=1}^m(\alpha_{1,i,\nu+1}-1)+\mathrm{O}(1),\end{array}
$$
which holds for every $\nu$, so the proposition is proved.

\end{proof}


\begin{thebibliography}{D-R}
\bibitem[BBM82]{bbm82} P.~Berthelot, L.~Breen, and W.~Messing, {\em Th\'eorie de Dieudonn\'e cristalline. II}, Lecture Notes in Mathematics {\bf 930}. Springer-Verlag, 1982.
\bibitem[BLR90]{blr90} S. Bosch, W. L$\ddot{\text{u}}$tkebohmert, and M. Raynaud, {\em{N$\acute{\text{e}}$ron Models,}} Springer-Verlag Berlin Heidelberg, 1990.

\bibitem[BX96]{bx96} S. Bosch, X. Xarles, {\em{Component group of N$\acute{\text{e}}$ron Models via rigid uniformization,}} 
Math. Ann. {\bf{306}} (1996), 459-486.

\bibitem[Ces15]{ces15} K. $\check{\text{C}}$esnavi$\check{\text{c}}$ius,  {\em{Poitou-Tate without the restriction on the order,}}
Math. Res. Lett. {\bf 22} (2015), no. 6, 1621-1666.


\bibitem[Crw87]{crw87} R.~Crew, {\em{L-functions of p-adic characters and geometric Iwasawa theory}}, Invent. Math. {\bf 88} (1987), 395-403.
\bibitem[Gon09]{gon09} C.D. Gonz\'alez-Avil\'es, {\em{ Arithmetic duality theorems
for 1-motives over function fields.}} J. Reine Angew. Math. {\bf
632} (2009), 203-231.
\bibitem[GoT12]{got12} C.D. Gonz\'alez-Avil\'es, K.-S. Tan, 
{\em{On the Hasse Principle for finite group schemes over global function fields,}}
Math. Res. Lett. {\bf 19} (2012), no. 02, 453-460.

\bibitem[EGA IV, part 4]{grd67} A. Grothendieck and J. Dieudonn$\acute{\text{e}}$, {\em{El$\acute{\text{e}}$ments de G$\acute{\text{e}}$om$\acute{\text{e}}$trie Alg$\acute{\text{e}}$brique IV}}, Pul. Math. IHES 
   {\bf 32} (1967).
\bibitem[SGA 7I]{gth72} A. Grothendieck, {\em{Groupes de monodromie en G$\acute{\text{e}}$om$\acute{\text{e}}$trie Alg$\acute{\text{e}}$brique.}}
I. S$\acute{\text{e}}$minaire de G$\acute{\text{e}}$om$\acute{\text{e}}$trie Alg$\acute{\text{e}}$brique du Bois-Marie 1967-1969 (SGA 7 I).
Lecture Notes in Math. 288. Springer, Heidelberg 1972.
\bibitem[Kis93]{kis93} H. Kisilevsky, {\em{Multiplicative independence in function
fields,}} J. Number Theory {\bf 44} (1993) 352--355.
\bibitem[LLTT16]{lltt16} K.F. Lai, I. Longhi, K.-S. Tan and F. Trihan, {\em{On the Iwasawa main conjecture for semistable abelian varieties over
function fields}}, Mathematische Zeitschrift {\bf 282} (2016), issue 1, 485-510.

\bibitem[LSc10]{lsc10} C. Liedtke, S. Schr\"oer, {\em{The N\'eron model over the Igusa curves,}} J. Number Theory {\bf 130} (2010), 2157-2197.
\bibitem[Mil80]{mil80} J.S. Milne, {\em{$\acute{\text{E}}$tale Cohomology,}} Princeton
University Press, Princeton, 1980.
\bibitem[Mil06]
{mil86} J.S. Milne, {\em{Arithmetic duality theorems,}} Second Ed. (electronic version),
2006.

\bibitem[Mon81]{monsky} P.~Monsky, {\em On $p$-adic power series.}  Math. Ann.  {\bf 255} (1981), no. 2, 217--227.
\bibitem[OT09]{ot09} T. Ochiai and F. Trihan, {\em{On the Iwasawa main conjecture of abelian varieties over function fields of
characteristic $p > 0$,}} Math. Proc. Camb. Philos. Soc. {\bf 146} (2009), 23-43. 

\bibitem[Sil86]{sil86}
J. H. Silverman,
\emph{The Arithmetic of Elliptic Curves,}
Springer Verlag,
Berlin, 1986.
\bibitem[Tan95]{tan95} K.-S. Tan, {\em{Refined theorems of the Birch and Swinnerton-Dyer type,}} Ann. Inst. Fourier, Grenoble {\bf 45} (1995),
317-374.
\bibitem[Tan10]{tan10} K.-S.~Tan, {\em{A generalized Mazur's theorem and its applications,}} Trans. Amer. Math. Soc.  {\bf 362} (2010),  no. 8, 4433--4450.
\bibitem[Tan14]{tan13} K.-S.~Tan, {\em Selmer groups over $\Z_p^d$-extensions}, Math. Ann. {\bf{359}} (2014), 1025-1075.
\bibitem[Ta66]{Ta66} J.~Tate, {\em {On the conjecture of Birch and swinnerton-Dyer and a geometric analogue,}} S$\acute{\text{e}}$minare Bourbaki  no. {\bf 9} (1966), 415-440.
\bibitem[Ta84]{Ta84} J. Tate, {\em{Les Conjectures de Stark sur les Fonctions
$L$ d'Artin en $s=0$,}} (Birkhauser, Boston, 1984).
\end{thebibliography}
\end{document}